\documentclass[reqno,11pt,a4paper]{amsart}
\RequirePackage{amsmath}
\RequirePackage{amssymb}
\usepackage{paralist}
\usepackage{graphics} 
\usepackage{epsfig} 
\usepackage{graphicx} 
\usepackage[colorlinks=true]{hyperref}
%
%

\usepackage{amsmath}
\usepackage{amssymb}
\usepackage{mathrsfs}
\usepackage{bm}
 \usepackage{geometry}
 \usepackage{fullpage}
\newcommand{\qe}{}
\newtheorem{Thm}{Theorem}[section]
\newtheorem{Def}[Thm]{Definition}
\newtheorem{Lem}[Thm]{Lemma}

\newtheorem{Rem}[Thm]{Remark}


\newcommand{\1}{\mathbf{1}}
\newcommand{\R}{\mathbb{R}}
\newcommand{\Rd}{{\mathbb{R}^d}}

\renewcommand{\Re}{\text{Re}}
\renewcommand{\Im}{\text{Im}}
\newcommand{\F}{\mathscr{F}}
\newcommand{\N}{\mathbb{N}}
\newcommand{\C}{\mathbb{C}}
\newcommand{\g}{\mathbb{G}}
\renewcommand{\L}{\mathscr{L}}

\newcommand{\B}{\widehat{B}(\xi)}
\newcommand{\A}{\widehat{A}(\xi)}
\renewcommand{\Re}{\text{Re}}
\newcommand{\Ker}{\text{Ker}}
\renewcommand{\S}{\mathscr{S}}
\newcommand{\K}{\mathcal{K}}
\newcommand{\<}{\langle}
\renewcommand{\>}{\rangle}

\renewcommand{\H}{H(a^{1/2})}
\newcommand{\HH}{H(a)}

\usepackage{tikz,pgfplots}
\usepackage{tikz-cd}
\usepackage{tikz-3dplot}
\usepackage{amsmath,amssymb,amsthm,amsfonts,bm}
\usepackage{color}
\usepackage{tcolorbox}
\usepackage{pdfpages}
\usepackage{textcomp}
\usetikzlibrary{patterns}
\usetikzlibrary{shapes.geometric}
\usetikzlibrary{arrows.meta,arrows}
\usetikzlibrary{decorations.pathreplacing,decorations.pathmorphing}
\usetikzlibrary{hobby}
\usetikzlibrary{math}
\usepgfplotslibrary{fillbetween}
\usepackage{graphicx}
\usepackage{multicol}
\usepackage{float}
\usepackage{subfigure}

\usepackage{cite}

\allowdisplaybreaks

\newcommand{\vertiii}[1]{{\left\vert\kern-0.25ex\left\vert\kern-0.25ex\left\vert #1 \right\vert\kern-0.25ex\right\vert\kern-0.25ex\right\vert}}


\begin{document}

		\title[Global Existence of Boltzmann Equation]{Global Existence of Non-cutoff Boltzmann Equation in Weighted Sobolev Space}

	
\author{Dingqun Deng}
\address[D.-Q. Deng]{Beijing Institute of Mathematical Sciences and Applications and Yau Mathematical Science Center, Tsinghua University, Beijing, People's Republic of China}
\curraddr{}
\email{dingqun.deng@gmail.com}
\thanks{}
\date{}
\keywords{ Global existence\and strongly continuous semigroup \and pseudo-differential calculus\and Boltzmann equation without angular cutoff\and regularizing effect.}
\subjclass{Primary 35Q20; Secondary  76P05, 82C40.}
\maketitle


	\begin{abstract}
	This article presents a new approach of semigroup analysis and pseudo-differential calculus for deriving the regularizing estimate on non-cutoff linearized Boltzmann equation. We are able to obtain regularizing estimate of semigroup $e^{tB}$ that is continuous from weighted Sobolev space $H(a^{-1/2})H^m_x$ to $H(a^{1/2})H^m_x$ with a sharp large time decay. With these properties, we prove the existence of global-in-time unique solution to the non-cutoff Boltzmann equation for hard potential on the whole space with weak regularity assumption on initial data. We consider the hard potential case since $H(a^{1/2})$ can be embedded in $L^2$. This work develops the application of pseudo-differential calculus, spectrum analysis and semigroup theory to non-cutoff Boltzmann equation.

%

	%
	%
\end{abstract}	

\tableofcontents

 \section{Introduction}
The non-cutoff Boltzmann equation for the long-range interaction potentials is a fundamental mathematical model in collisional kinetic theory, which describes the dynamics of a non-equilibrium rarefied gas. A well-established framework for studying global well-posedness is searching for solutions close to the global Maxwellian equilibrium in different function spaces. An open problem is how to characterize the optimal mathematical space of initial data with lower regularity in space and velocity variables such that the unique solution exists globally in time. 
This work gives one direction to this problem, by reducing the Sobolev space on velocity from $H^l_k$ to $\H$. Also, this work provides an approach to applying the semigroup theory to the non-cutoff Boltzmann equation, which is very helpful in the analysis of the solution to the non-cutoff Boltzmann equation.

\subsection{Equation}
 In the present paper, we consider the Boltzmann equation in $d$-dimension:
 \begin{align}\label{eq1}
   F_t + v\cdot\nabla_x F = Q(F,F),
 \end{align} 
where the unknown $F(x,v,t)$ represents the density of particles in phase space, with spatial coordinate $x\in\Rd$ and velocities $v\in\Rd$ with $d\ge 2$. The Boltzmann collision operator $Q(F,G)$ is a bilinear operator defined for sufficiently smooth functions $F,G$ by
 \begin{align*}
   Q(F,G)(v) := \int_{\Rd}\int_{S^{d-1}}B(v-v_*,\sigma) (F'_*G' - F_*G) \, d\sigma dv_*
 \end{align*}
 where $F'_* = F(x,v'_*,t)$, $G' = G(x,v',t)$, $F_* = F(x,v_*,t)$, $G =G(x,v,t)$. $(v,v_*)$ are the velocities of two gas particles before collision while
 $(v',v'_*)$ are the velocities after collision satisfying the following conservation laws of momentum and energy,
 \begin{align*}
   v+v_*=v'+v'_*,\ \ |v|^2+|v_*|^2=|v'|^2+|v'_*|^2.
 \end{align*}As a consequence, for $\sigma\in \mathbb{S}^{d-1}$, the unit sphere in $\R^d$, we have the $\sigma$-representation:
 \begin{align*}
   v' = \frac{v+v_*}{2}+\frac{|v-v_*|}{2}\sigma,\ \
   v'_* = \frac{v+v_*}{2}-\frac{|v-v_*|}{2}\sigma.
 \end{align*}
 Also, we define the angle $\theta$ between $v-v_*$ and $\sigma$ by
 \begin{align*}
   \cos\theta = \frac{v-v_*}{|v-v_*|}\cdot \sigma,
 \end{align*}where $\cdot$ denotes the usual inner product in $\R^d$.
 \subsection{Collision kernel} The collision kernel $B$ satisfies
 \begin{align*}
   B(v-v_*,\sigma) = |v-v_*|^\gamma b(\cos\theta),
 \end{align*}
 for some $\gamma\in\R$ and function $b$. Without loss of generality, we can assume that $B(v-v_*,\sigma)$ is supported on $(v-v_*)\cdot\sigma\ge 0$ which corresponds to $\theta\in[0,\pi/2]$, since $B$ can be replaced by its symmetrized form $\overline{B}(v-v_*,\sigma) = B(v-v_*,\sigma)+B(v-v_*,-\sigma)$.
 Moreover, we are going to work on the collision kernel without angular cut-off, which corresponds to the case of inverse power interaction laws between particles. That is,
 \begin{align*}
   b(\cos\theta)\approx \theta^{-d+1-2s}\ \text{ on }\theta \in (0,\pi/2),
 \end{align*}
where we assume 
 \begin{align*}
   s\in (0,1), \quad \gamma\in (-d,\infty).
 \end{align*}For Boltzmann equation without angular cut-off, it's customary to use term soft potential when $\gamma+2s< 0$, Maxwell molecules when $\gamma+2s=0$, and hard potential when $\gamma+2s\ge0$. For mathematical theory of Boltzmann equation, one may refer to \cite{Alexandre2001,Alexandre2009,Cercignani1994,Villani2002} for more introduction.

\subsection{Preliminary result}
 We will study the Boltzmann equation \eqref{eq1} near the global Maxwellian equilibrium
 \begin{align*}
   \mu(v)=(2\pi)^{-d/2}e^{-|v|^2/2}.
 \end{align*}
 Set $F = \mu + \mu^{\frac{1}{2}}f$ and then the Boltzmann equation \eqref{eq1} becomes
 \begin{align*}
   f_t + v\cdot\nabla_x f = Lf + \mu^{-1/2}Q(\mu^{1/2}f,\mu^{1/2}f),
 \end{align*}
 where $L$ is called the linearized Boltzmann operator given by
 \begin{align*}
   Lf = \mu^{-1/2}Q(\mu,\mu^{1/2}f) + \mu^{-1/2}Q(\mu^{1/2}f,\mu) = L_1f+L_2f.
 \end{align*}
$L_1,L_2$ are defined in \eqref{L1}\eqref{L2}. The kernel of $L$ is Span$\{\psi_i\}_{i=0}^{d+1}$ defined in \eqref{Kernel} and we denote the projections $P_0$ and $P_1$ from $L^2$ onto $\Ker L$ and $(\Ker L)^{\perp}$ respectively by 
\begin{align}\label{P}
P_0f \equiv Pf := \sum^{d+1}_{i=0}(f,\psi_i)_{L^2}\psi_i,\quad P_1 = I-P_0, 
\end{align}
with $I$ being the identity map on $L^2$. 

We want to apply the symbolic calculus in \cite{Global2019} for our study as the following. 
One may refer to Section \ref{PD} for the materials that are needed to understand pseudo-differential calculus, such as admissible metric and the definition of Sobolev space $H(c)$. Moreover, we recommend \cite{Lerner2010} for more information about pseudo-differential calculus. Let $\Gamma=|dv|^2+|d\eta|^2$ be an admissible metric. 
Define
 \begin{align}\label{a}
   a(v,\eta):=\<v\>^\gamma(1+|\eta|^2+|\eta\wedge v|^2+|v|^2)^s+K_0\<v\>^{\gamma+2s}
 \end{align}be a $\Gamma$-admissible weight, which is proved in \cite{Global2019}, where $K_0>0$ is chosen as the following and $|\eta\wedge v|=|\eta||v|\sin\theta_0$ with $\theta_0$ being the angle between $\eta$ and $v$. 
Applying Theorem 4.2 in \cite{Global2019} and Lemmas 2.1 and 2.2 in \cite{Deng2020a}, there exists $K_0>0$ such that the Weyl quantization $a^w:H(ac)\to H(c)$ and $(a^{1/2})^w:H(a^{1/2}c)\to H(c)$ are invertible, with $c$ be any $\Gamma$-admissible metric. The definition of Weyl quantization $(a^{1/2})^w$ is given in Section \ref{PD}. The weighted Sobolev spaces $H(c)$ is defined by \eqref{sobolev_space}. The real symbol $a$ gives the formal self-adjointness of Weyl quantization $a^w$, which is widely applied in our analysis. By the invertibility of $(a^{1/2})^w$, we have equivalence 
\begin{align*}
\|(a^{1/2})^w(\cdot)\|_{L^2}\approx\|\cdot\|_{\H},
\end{align*}and hence we will equip $\H$ with norm $\|(a^{1/2})^w(\cdot)\|_{L^2}$. Also, we will denote the weighted Sobolev norms that, for $l,n\in\R$,
\begin{align*}
\|f\|_{L^2_vH^m_x}:&=\|\<D_x\>^mf\|_{L^2_{x,v}},\\
\|f\|_{\H H^m_x}:&=\|\<D_x\>^m(a^{1/2})^wf\|_{L^2_{x,v}},
\end{align*}where $\<D_x\>^mf=\F^{-1}_x\<\xi\>^m\F_x f$ and $L^2_{x,v}=L^2(\R^{2d}_{x,v})$. Here $\xi$ is the dual variable of $x$.

To analyze the linearized Boltzmann operator rigorously, we use the Carleman representation \eqref{Carleman} to define $L=L_1+L_2$:
\begin{align}
	L_1f &= \lim_{\varepsilon\to 0}\int_{\R^d,|h|\ge\varepsilon}\,dh\int_{E_{0,h}}\,d\alpha\,\tilde{b}(\alpha,h)\1_{|\alpha|\ge|h|}\frac{|\alpha+h|^{\gamma+1+2s}}{|h|^{d+2s}}\mu^{1/2}(v+\alpha-h)\notag\\
	&\qquad\qquad\Big((\mu^{1/2}(v+\alpha)f(v-h)-\mu^{1/2}(v+\alpha-h)f(v)\Big),\label{L1}\\
	L_2f &= \lim_{\varepsilon\to 0} \int_{\R^d,|h|\ge\varepsilon}\,dh\int_{E_{0,h}}\,d\alpha\,\tilde{b}(\alpha,h)\1_{|\alpha|\ge|h|}\frac{|\alpha+h|^{\gamma+1+2s}}{|h|^{d+2s}}\mu^{1/2}(v+\alpha-h)\notag\\
\label{L2}	&\qquad\qquad\Big(\mu^{1/2}(v-h)f(v+\alpha)-\mu^{1/2}(v)f(v+\alpha-h)\Big),
\end{align}
which are well-defined for Schwartz function according to \cite{Global2019,Deng2020a}.
Here we use the principal value on $h$ in order to assure the integrals are well-defined when the two terms in the parentheses is separated into two integrals, where change of variable can be applied.
By Section 3 in \cite{Deng2020a}, $L=L_1+L_2$ can be regarded as the standard pseudo-differential operator with symbols in $S(a)$.
Then by the unique extension of continuous operator, $L$ is a linear continuous operator from $H(ac)$ into $H(c)$ for any $\Gamma-$admissible weight function $c$. Also, Lemma \ref{basicL} gives the formal self-adjointness of $L$. 
To better applying the previous result, we consider weighted Sobolev norm $\|(a^{1/2})^w(\cdot)\|_{L^2}$, triple norm $\vertiii{\cdot}$ in \cite{Alexandre2012} and the norm $|\cdot|_{N^{s,\gamma}}$ in \cite{Gressman2011},
where 
\begin{align*}
\vertiii{f}^2:&=\int B(v-v_*,\sigma)\Big(\mu_*(f'-f)^2+f^2_*((\mu')^{1/2}-\mu^{1/2})^2\Big)\,d\sigma dv_*dv,\\
|f|^2_{N^{s,\gamma}}:&=\|\<v\>^{\gamma/2+s}f\|^2_{L^2}+\int(\<v\>\<v'\>)^{\frac{\gamma+2s+1}{2}}\frac{(f'-f)^2}{d(v,v')^{d+2s}}\1_{d(v,v')\le 1},
\end{align*}with $d(v,v'):=\sqrt{|v-v'|^2+\frac{1}{4}(|v|^2-|v'|^2)^2}$. Then by (2.13)(2.15) in \cite{Gressman2011}, Proposition 2.1 in \cite{Alexandre2012} and Theorem 1.2 in \cite{Global2019}, for $f\in\S$, $l\in\R$, we have the equivalence of norms:
\begin{align}\label{equivalent_norm}
\|(a^{1/2})^wf\|^2_{L^2_v}\approx\vertiii{f}^2\approx|f|^2_{N^{s,\gamma}}\approx (-Lf,f)_{L^2_v}+\|\<v\>^lf\|_{L^2_v},
\end{align}where the constants depend on $l$. These norms  describe the essential behavior of Boltzmann collision operator. 

\subsection{Main result}
Our main result is the global existence of the Boltzmann equation without angular cutoff for hard potential in the weighted Sobolev space. Former results on spatially inhomogeneous Boltzmann equation were on torus $\mathbb{T}^d_x$, cf. \cite{Gressman2011}, or require more regularity on the initial data, cf. \cite{ALEXANDRE2011a}. This work provides the global solution on the whole space $\R^d_x$ and requires minimum assumption with respect to $v$ on initial data as Boltzmann equation with angular cutoff. According to the previous study of the non-cutoff Boltzmann equation, the dissipation estimate on $L$ is necessary for finding the solution. To extract the dissipation estimate of linearized Boltzmann equation, we define a new type of decomposition 
\begin{equation}\label{DefA}
\begin{aligned}
A &:= \left\{\begin{aligned}
&-L + P\<v\>^{\gamma+2s}P,&\text{ if }\gamma+2s\ge 0, \\
&-L + P,&\text{ if }\gamma+2s\le 0,
\end{aligned}\right.\\
K &:= A+L.
\end{aligned}
\end{equation}
where $P$ is the projection given in \eqref{P}. 
Then $L$ is decomposed as $-A+K$ and we have the following dissipation properties, which is valid for both hard and soft potential. 
\begin{Thm}\label{Thm0}The linearized Boltzmann equation can be split as $L=-A+K$ by its definition \eqref{DefA}. There exists $\nu_0>0$ such that for $f\in \S(\R^d)$,
\begin{equation}\label{A_positive}
	\Re(Af,f)_{L^2_v}\ge \nu_0\|(a^{1/2})^wf\|^2_{L^2_v}
\end{equation}
Consequently, 
\begin{align*}
\Re(Af,f)_{L^2_{x,v}}\ge \nu_0\|(a^{1/2})^wf\|^2_{L^2_{x,v}}.
\end{align*}Additionally, $A$ and $K$ can be regarded as Weyl quantizations with symbols belonging to $S(a)$ and $S(\<v\>^{-k}\<\eta\>^{-l})$respectively, for any $k,l\ge0$. Hence $K$ is compact on $L^2_v$ and maps $L^2_v$ function into $\S(\Rd)$.
\end{Thm}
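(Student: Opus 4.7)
The plan is to first unpack the structural and pseudo-differential properties of $A$ and $K$ from the explicit definition \eqref{DefA}, and then establish the coercivity \eqref{A_positive} by a two-piece argument on $\Ker L$ and $(I-P)L^2_v$. From \eqref{DefA}, $K := A + L$ equals $P\<v\>^{\gamma+2s}P$ in the hard case and $P$ in the soft case. In both cases I would write $K$ as a finite-rank operator with integral kernel
\begin{equation*}
k(v,v') = \sum_{i,j} c_{ij}\psi_i(v)\psi_j(v') \in \S(\R^{2d}),
\end{equation*}
where $c_{ij} = (\psi_i, \<v\>^{\gamma+2s}\psi_j)_{L^2}$ (hard) or $c_{ij} = \delta_{ij}$ (soft). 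The inverse Weyl formula $\sigma(v,\eta) = \int k(v+z/2, v-z/2)e^{-iz\cdot\eta}\,dz$ then identifies the Weyl symbol of $K$ as a Schwartz function on $\R^{2d}_{v,\eta}$, so $\sigma \in S(\<v\>^{-k}\<\eta\>^{-l})$ for every $k, l \ge 0$; since the discussion preceding the theorem places $L$ in $S(a)$ and $\S(\R^{2d}) \subset S(a)$, the sum $A = -L + K$ also has symbol in $S(a)$. Compactness of $K$ on $L^2_v$ is then immediate (finite rank), and $K$ sending $L^2_v$ into $\S(\R^d)$ follows from Schwartz regularity of $k(v,\cdot)$ uniformly in $v$.

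For the coercivity \eqref{A_positive}, I would split $f = Pf + (I-P)f$ and exploit self-adjointness of $L$ together with the fact that $P$ projects onto $\Ker L$, yielding $LP = PL = 0$ and hence $(-Lf,f)_{L^2_v} = (-L(I-P)f, (I-P)f)_{L^2_v}$. The goal on the complement is the weighted coercivity
\begin{equation*}
(-Lf,f)_{L^2_v} \ge c\,\|(a^{1/2})^w (I-P)f\|^2_{L^2_v},
\end{equation*}
which I would derive by taking $l = 0$ in \eqref{equivalent_norm} to get $\|(a^{1/2})^w g\|^2_{L^2_v} \le C\bigl((-Lg,g)_{L^2_v} + \|g\|^2_{L^2_v}\bigr)$ and then invoking the classical $L^2_v$-spectral gap $(-Lg,g)_{L^2_v} \ge \lambda\|g\|^2_{L^2_v}$ on $(I-P)L^2_v$ to absorb the lower-order term. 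For the perturbative piece, write $\mathcal{W}$ for $\<v\>^{\gamma+2s}$ (hard) or $1$ (soft); then $(P\mathcal{W}Pf, f)_{L^2_v} = (\mathcal{W}Pf, Pf)_{L^2_v}$, and since $\mathcal{W} > 0$ a.e.\ this quadratic form is strictly positive on the finite-dimensional subspace $\mathrm{Span}\{\psi_i\}$. Finite-dimensional norm equivalence then yields $(\mathcal{W}Pf, Pf)_{L^2_v} \ge c'\|(a^{1/2})^w Pf\|^2_{L^2_v}$. Summing the two estimates and using $\|(a^{1/2})^w f\|^2 \le 2\|(a^{1/2})^w Pf\|^2 + 2\|(a^{1/2})^w (I-P)f\|^2$ produces \eqref{A_positive}, and the $L^2_{x,v}$ version follows by integrating in $x$ via Fubini.

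The main obstacle I anticipate is establishing the $L^2_v$-spectral gap for the non-cutoff operator $L$ on $(I-P)L^2_v$: this is a classical but nontrivial fact, and it cannot be read off directly from \eqref{equivalent_norm} because that equivalence only controls $\|(a^{1/2})^w f\|^2$ modulo a lower-order term $\|\<v\>^l f\|^2$ that requires a further mechanism to absorb. If invoking a cited spectral gap is to be avoided, one can substitute a compactness/contradiction argument leveraging the finite-dimensionality of $\Ker L$ together with \eqref{equivalent_norm}. Beyond this point, the remaining steps are structural manipulations with \eqref{DefA}, an application of the inverse Weyl formula to a Schwartz kernel, and elementary finite-dimensional linear algebra.
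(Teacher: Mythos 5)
Your proposal takes a genuinely different route on the coercivity estimate \eqref{A_positive}, and I'll flag both what works and where it breaks. On the structural side --- identifying $K$ as a finite-rank operator with Schwartz kernel, passing to the Weyl symbol to see it lies in $S(\<v\>^{-k}\<\eta\>^{-l})$, deducing compactness and the smoothing property $K:L^2_v\to\S$, and noting $A=-L+K$ keeps its symbol in $S(a)$ --- this is correct and runs parallel to the paper, which instead exhibits the standard symbol of $K$ explicitly as $\sum_i e^{-2\pi iv\cdot\eta}\F(P\<\cdot\>^{\gamma+2s}\psi_i)(\eta)\psi_i(v)$; the two viewpoints are equivalent.

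The coercivity is where you diverge from the paper, and where there is a gap. The paper does not perform the $Pf/(I-P)f$ split; it quotes the decomposition $L=-b^w+\K^w$ from \cite{Deng2019}, combines the two estimates in \eqref{eq13} with the microscopic coercivity $\nu_0\|(a^{1/2})^w(I-P)f\|^2_{L^2}\le(-Lf,f)_{L^2}$ cited directly from Proposition 2.1 of \cite{Alexandre2012}, and then bootstraps $\|\<v\>^{\gamma/2+s}f\|^2_{L^2}\le C(Af,f)_{L^2}$ to absorb the lower-order piece of \eqref{eq13}. Your $Pf/(I-P)f$ split is cleaner in structure, and the positivity of $(P\mathcal{W}Pf,f)$ by finite-dimensional norm equivalence is correct. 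But the way you propose to establish the microscopic coercivity --- upgrading \eqref{equivalent_norm} with $l=0$ using the $L^2_v$-spectral gap $(-Lg,g)\ge\lambda\|g\|^2$ on $(I-P)L^2_v$ --- only works for hard potentials, whereas Theorem \ref{Thm0} is explicitly stated for both hard and soft. When $\gamma+2s<0$ the collision frequency degenerates for large $|v|$, there is no $L^2$ spectral gap, and the weighted coercivity is strictly weaker than a spectral gap (that weakness is exactly why it survives the soft case). The compactness/contradiction fallback you sketch runs into a similar obstruction: $\H\hookrightarrow L^2_v$ is not compact (for $\gamma+2s=0$ the weight $a^{1/2}$ has no decay in $v$, and for soft it is not even bounded), so one cannot upgrade a weak limit to a nontrivial strong limit. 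The simplest repair, and the one the paper uses, is to cite \eqref{eq17} directly rather than derive it; once you do that, your $Pf/(I-P)f$ split yields \eqref{A_positive} for both regimes by a route that is arguably more transparent than the paper's bootstrapping through \eqref{eq13}.
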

Define an operator closure\begin{align*}
	B=\overline{-v\cdot\nabla_x+L}
\end{align*} as in \eqref{eq33}, then $B$ generates a strongly continuous semigroup $e^{tB}$ on $L^2$. The closure is necessary for generating the semigroup and hence it's necessary throughout our arguments. Then we can discuss the solution $e^{tB}f_0$ to the linearized Boltzmann equation:
\begin{equation*}\left\{\begin{aligned}
		f_t = Bf,\\
		f|_{t=0}=f_0.
	\end{aligned}\right.
\end{equation*}
This semigroup $e^{tB}$ generated by the linearized Boltzmann operator plays an important role in the perturbation theory of Boltzmann equation and kinetic equation, since the solution to Boltzmann equation can be written into a perturbation form of the solution to its linearized equation, for instance, \cite{Gualdani2017,Ukai1982}. Our first main result gives an optimal regularizing effect on the linearized Boltzmann equation for hard potential and a large time decay estimate. With this regularizing estimate established, we can apply the energy estimate to obtain a global solution. Our method is building on the whole space $\R^d_x$, which is different from torus $\mathbb{T}^d_x$. Thus, the analysis of the spectrum structure on linearized Boltzmann operator is required and provides a new approach to the existence theory of Boltzmann equation without angular cutoff. 
Before we state the regularizing estimate of the linearized Boltzmann operator, we denote a structural constant $\nu_0>0$ by applying Proposition 2.1 in \cite{Alexandre2012}
\begin{align*}
	(-Lf,f)_{L^2_v}\ge\nu_0\|(a^{1/2})^w(I-P)f\|^2_{L^2_v}.
\end{align*}
Then we can state the regularizing estimate of semigroup $e^{tB}$. 
\begin{Thm}\label{Thm1}Assume $\gamma+2s\ge0$. Then $B$ generates a strongly continuous semigroup $e^{tB}$ on $L^2_{x,v}$. 
	For any $f\in\S(\R^{2d}_{x,v})$, $k\ge 2$, $m\in\R$ and $p\in[1,2)$, we have 
	\begin{align*}
		\|e^{tB}f\|^2_{H(a^{1/2})H^m_x}
		&\le \frac{Ce^{-2\kappa t}}{t^{2k}}\|f\|^2_{H(a^{-1/2})H^m_x}
		+\frac{C}{(1+t)^{d/2(2/p-1)}}\|a^{-1/2}(v,D_v)f\|^2_{L^2_v(L^p_x)},
	\end{align*}for any $t>0$, where $\kappa>0$ is a constant depending on $\nu_0$
	and the constant $C$ is independent of $f$ and $p$.
\end{Thm}
Note that the space $H(a^{1/2})H^m_x$ is a better space than $H(a^{-1/2})H^m_x$ when $\gamma+2s\ge0$, which gives the so-called regular estimate. Also, our theorem gives the large time decay to Cauchy problem of linearized Boltzmann equation.

The smoothing effect of the linearized Boltzmann operator and Boltzmann collision operator for angular non-cutoff collision kernel was discussed in many contexts. In the beginning, entropy production estimates for non-cutoff assumption were established, as in \cite{Alexandre2000,Lions1998}. Their results were widely applied in the theory of non-cutoff Boltzmann equation. Late, many works discover the optimal regular estimate of Boltzmann collision operator in $v$ in a different setting. We refer to \cite{Alexandre2011,Global2019,Gressman2011a,Mouhot2007} for the dissipation estimate of collision operator, and \cite{ALEXANDRE2005,Alexandre2009a,Alexandre2010,Barbaroux2017,Barbaroux2017a,Chen2011,Chen2012,Lekrine2009,Lerner2014,Lerner_2015,Chen2021,Deng2021b,Duan2021a} for smoothing effect of the solution to Boltzmann equation in different aspect. With this kind of regular estimate, existence theory was well-discussed in many papers, for instance, \cite{Gressman2011,ALEXANDRE2011a}. These works show that the Boltzmann operator behaves locally like a fractional operator:
\begin{align*}
	Q(f,g)\sim (-\Delta_v)^sg+\text{lower order terms}.
\end{align*}
More precisely, according to the symbolic calculus developed by Alexandre-H{\'{e}}rau-Li \cite{Global2019}, the linearized Boltzmann operator behaves essentially as 
\begin{align*}
	L \sim \<v\>^\gamma(-\Delta_v-|v\wedge\partial_v|^2+|v|^2)^s+\text{lower order terms}.
\end{align*}
This diffusion property shows that the spatially homogeneous Boltzmann equation behaves like a fractional heat equation, while the spatially inhomogeneous Boltzmann behaves like the generalized Kolmogorov equation. We refer to \cite{Barbaroux2017a,Lekrine2009,Lerner_2015} for Kac equation, the one-dimensional model of Boltzmann equation, and \cite{Morimoto2009} for similar kinetic equation. Thus, the Cauchy problem to Boltzmann equation enjoys a smoothing effect at any positive time, which is essential to the existence theory. 

Once the large time behavior of linearized Boltzmann equation is established, we can find out the existence of global-in-time unique solution to Boltzmann equation. 
\begin{Thm}\label{Thm2}Suppose $d\ge3$, $m>\frac{d}{2}$, $\gamma+2s\ge 0$. There exists $\varepsilon_0,C>0$ so small that if 
	\begin{align*}
		\|f_0\|_{X}\le \varepsilon_0,
	\end{align*}
	where $X$ is defined by 
	\begin{align*}
		\|f\|^2_X = \delta\|f\|^2_{L^2_vH^m_x} + \int^{\infty}_0\|e^{\tau B}f\|^2_{L^2_vH^m_x}\,d\tau, 
	\end{align*}
	then there exists an unique global weak solution $f$ to Boltzmann equation 
	\begin{align}
		\label{eq00}f_t=Bf+\Gamma(f,f),\quad f|_{t=0}=f_0,
	\end{align} satisfying 
	\begin{align*}
		\|f\|_{L^\infty([0,\infty);L^2_vH^m_x)}+\|f\|_{L^2([0,\infty);\H H^m_x)}\le C\varepsilon_0,
	\end{align*}with some constant $C>0$. 
\end{Thm}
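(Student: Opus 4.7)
The plan is to construct the solution from the Duhamel/mild formulation
$$f(t)=e^{tB}f_0+\int_0^t e^{(t-\tau)B}\Gamma(f,f)(\tau)\,d\tau,\qquad \Gamma(f,g):=\mu^{-1/2}Q(\mu^{1/2}f,\mu^{1/2}g),$$
and to close uniform a priori bounds in the energy space
$$Y:=L^\infty\bigl([0,\infty);L^2_vH^m_x\bigr)\cap L^2\bigl([0,\infty);H(a^{1/2})H^m_x\bigr).$$
Three inputs drive the argument: the microscopic coercivity of $-L$ coming from Theorem \ref{Thm0} combined with the equivalence \eqref{equivalent_norm}; the smoothing/decay of $e^{tB}$ from Theorem \ref{Thm1}; and a trilinear estimate on the collision nonlinearity $\Gamma$.

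The first step is to establish the trilinear bound
$$\bigl|\bigl(\Gamma(f,g),h\bigr)_{L^2_vH^m_x}\bigr|\lesssim \|f\|_{L^2_vH^m_x}\,\|g\|_{H(a^{1/2})H^m_x}\,\|h\|_{H(a^{1/2})H^m_x}.$$
The velocity-only version $|(\Gamma(f,g),h)_{L^2_v}|\lesssim\|f\|_{L^2_v}\|g\|_{\H}\|h\|_{\H}$ can be extracted from the Carleman formulas \eqref{L1}--\eqref{L2}, the equivalence \eqref{equivalent_norm}, and the pseudo-differential framework already applied to $L$; it is then promoted to $H^m_x$ using that $H^m_x$ is a Banach algebra for $m>d/2$, the condition $d\ge 3$ ensuring $m>d/2\ge 3/2$ is large enough for the nonlinear term. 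Taking the $L^2_vH^m_x$ inner product of \eqref{eq00} with $f$, noting that $v\cdot\nabla_x$ is skew-adjoint and that \eqref{equivalent_norm} together with Theorem \ref{Thm0} gives $-(Lf,f)_{L^2_v}\ge \nu_0\|(I-P)f\|^2_{\H}$, yields the energy inequality
$$\tfrac{1}{2}\partial_t\|f\|_{L^2_vH^m_x}^2+\nu_0\|(I-P)f\|_{H(a^{1/2})H^m_x}^2\lesssim \|f\|_{L^2_vH^m_x}\|f\|_{H(a^{1/2})H^m_x}^2.$$

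The main obstacle is that the microscopic coercivity does not control the hydrodynamic component $Pf$, and, unlike the torus setting of \cite{Gressman2011}, one cannot close the argument on $\mathbb{R}^d_x$ through conservation laws alone. This is precisely where Theorem \ref{Thm1} enters: applied to the Duhamel integrand with some $p\in[1,2)$, it gives polynomial-in-$t$ decay of $\|e^{tB}g\|_{H(a^{1/2})H^m_x}^2$ at rate $(1+t)^{-d/2(2/p-1)}$, which is integrable in $t$ precisely when $d\ge 3$ for suitable $p$. The norm $X$ in \eqref{DefX} is tailored to combine the energy norm of $f_0$ with the $L^2_v(L^p_x)$-type norm demanded by Theorem \ref{Thm1}, so that dispersion carries the macroscopic piece $Pf$ through the iteration. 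Combining the energy inequality with this decay, and pairing the bilinear estimate with Young's inequality in time on the Duhamel convolution, gives a closed bound of the form
$$\|f\|_Y\le C_1\|f_0\|_X+C_2\|f\|_Y^2.$$

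Once this a priori estimate is secured, I would run the Picard iteration $\partial_tf^{n+1}=Bf^{n+1}+\Gamma(f^n,f^n)$, $f^{n+1}|_{t=0}=f_0$: the trilinear estimate and semigroup bounds show $\{f^n\}$ is uniformly bounded in $Y$ and Cauchy in a slightly weaker norm for $\varepsilon_0$ small enough that $4C_1C_2\varepsilon_0<1$, so passing to the weak-$\ast$ limit yields a weak solution with $\|f\|_Y\le 2C_1\|f_0\|_X\lesssim \varepsilon_0$. Uniqueness in $Y$ follows by applying the same trilinear estimate to the difference of two solutions and invoking Grönwall's inequality. The delicate point throughout will be verifying that the $L^p_x$ input required by Theorem \ref{Thm1} is actually propagated by the nonlinear iteration; this propagation is built into the definition of $X$ and is the reason the theorem is restricted to $d\ge 3$.
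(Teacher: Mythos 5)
Your proposal correctly identifies the three analytic inputs (coercivity of $-L$, decay of $e^{tB}$, trilinear collision estimate) and the role of $d\ge 3$, but it proposes closing via the Duhamel formula with Young's inequality in time, and this route has a genuine gap. Theorem \ref{Thm1} delivers decay of the form
\begin{align*}
\|e^{tB}g\|^2_{\H H^m_x}\lesssim \frac{e^{-2\sigma_y t}}{t^{2k}}\|g\|^2_{H(a^{-1/2})H^m_x}+\frac{1}{(1+t)^{d/2(2/p-1)}}\|(a^{-1/2})^wg\|^2_{L^2_v(L^p_x)},
\end{align*}
with $k\ge 2$. The factor $t^{-2k}$ is not integrable near $t=0$, so the Duhamel kernel $e^{(t-\tau)B}$ acting on $\Gamma(f,f)(\tau)$ produces a non-integrable singularity as $\tau\to t$ if one simply applies Theorem \ref{Thm1} to the integrand and tries Young in time. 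You never address this singularity, and in the form stated your ``closed bound'' $\|f\|_Y\le C_1\|f_0\|_X+C_2\|f\|_Y^2$ does not actually follow. The paper avoids the issue entirely: it does not use the mild formulation at all for the nonlinear problem. Instead it performs a direct energy estimate on the inner product $(\cdot,\cdot)_X$ of \eqref{DefX}, where the extra term $\int_0^\infty\|e^{\tau B}f(t)\|^2_{L^2_vH^m_x}\,d\tau$ has time derivative $\int_0^\infty\frac{d}{d\tau}\|e^{\tau B}f\|^2\,d\tau=-\|f\|^2_{L^2_vH^m_x}$, supplying precisely the missing dissipation of the macroscopic component $Pf$. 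The decay of $e^{tB}$ (Theorem \ref{Thm1}) is used only once, to prove \eqref{eqq1}, i.e., to show the $X$-norm (and the time integral arising from $e^{\tau B}\Gamma$) is actually finite in terms of weighted Sobolev and $L^2_v(L^p_x)$ norms of the data; it does not appear in the nonlinear closure itself.

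Two further discrepancies with the paper are worth noting. First, you describe $X$ as ``combin[ing] the energy norm of $f_0$ with the $L^2_v(L^p_x)$-type norm,'' but \eqref{DefX} is actually $\delta\|f\|^2_{L^2_vH^m_x}+\int_0^\infty\|e^{\tau B}f\|^2_{L^2_vH^m_x}\,d\tau$, a time-integrated semigroup energy; the $L^p_x$ norm of $(a^{-1/2})^wf_0$ only serves as an upper bound for $\|f_0\|_X$ via \eqref{eqq1}. Second, the paper's iteration is semi-linearized, $f^{n+1}_t=Bf^{n+1}+\Gamma(f^n,f^{n+1})$, which is dictated by the structure of the bilinear estimate \eqref{eq88}: $\Gamma(f,g)$ is controlled by $\|f\|_{L^2}\|(a^{1/2})^wg\|_{L^2}$, so the argument in the strong slot must be the unknown $f^{n+1}$ to close the linear theory (Theorem \ref{Thmlinear}) before the contraction step. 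Your fully explicit scheme $\Gamma(f^n,f^n)$ is not obviously wrong, but it pushes you back into the mild formulation and the singularity problem above. If you keep the energy-method structure and the dissipative-norm trick, the rest of your outline (trilinear estimate, $d\ge 3$ for integrable decay, Gronwall for uniqueness) matches the paper.
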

\begin{Rem}
	Together with the upcoming work \cite{Deng2020b} on the regularity, the solution $f$ we obtained here is smooth in the spatial variable $x$ and velocity $v$. Thus, the solution we derived here is the global-in-time smooth solution to the Boltzmann equation. 
\end{Rem}
By \eqref{DefX} and \eqref{eqq1}, the smallness assumption on initial data $f_0$ can be fulfilled if 
\begin{align*}
	\|f_0\|_{L^2_vH^m_x}+\|(a^{-1/2})^wf_0\|_{L^2_v(L^p_x)}
\end{align*}is sufficient small for some $p\in[1,\frac{2d}{d+2})$. Such choice of Sobolev space is similar to the cutoff case, cf. \cite{Ukai}.
The space $\H H^m_x$ is critical due to estimate \eqref{A_positive}, \eqref{eq88} on $L$ and $\Gamma(\cdot,\cdot)$ respectively. The weak solution $f$ means that 
\begin{align*}
	\int^\infty_0(f_t,\varphi)_{L^2_{x,v}}\,dt = \int^\infty_0(f,B^*\varphi)_{L^2_{x,v}}\,dt+\int^\infty_0(\Gamma(f,f),\varphi)_{L^2_{x,v}}\,dt,
\end{align*}
for $\varphi\in \mathscr{D}((0,\infty);\S(\Rd))$, where $B^*=\overline{v\cdot\nabla_x+L}$. For Cauchy problem to Boltzmann equation near the global Maxwellian without angular cut-off, we refer to \cite{Gressman2011} for the existence theory on torus and \cite{ALEXANDRE2011a,Alexandre2011aa,Alexandre2012} on the whole space. We improve the space $H^k_l$ used in \cite{ALEXANDRE2011a} and the space $\H H^m_x$ we introduce in this paper is actually equivalent the norms $|\cdot|_{N^{s,\gamma}}$ and $\vertiii{\cdot}$, thanks to \eqref{equivalent_norm}. The assumption $\gamma+2s\ge 0$ is needed for a spectral gap \ref{spectrum_struture} and then we can deduce a polynomial time decay on the semigroup $e^{tB}$. Hence, we can use the norm $X$ \eqref{DefX} to describe the large time behavior of solution, which was also discussed in \cite{Carrapatoso2016,Gualdani2017,Herau2020}. But our new result builds on the whole space $\R^d_x$, which is essentially different from torus $\mathbb{T}^d_x$.

 \subsection{Organization of the article}
This article is organized as follows. In Section \ref{sec2}, we provide a proof of the dissipation of $A$ and the solution $g$ to equation $(v\cdot\nabla_x+A)g=f$, a linearized form of Boltzmann equation. The dissipation and existence to this linearized equation gives the rigorously proof of generating the semigroup $e^{tB}$ on $L^2$. Note that the Section \ref{sec2} is valid for both hard and soft potential. In Section \ref{sec3}, we write a rigorous argument for obtaining the spectrum structure to linearized Boltzmann operator for hard potential. In Section \ref{sec4}, after establishing the spectrum structure, we can discuss the regular estimate of semigroup $e^{tB}$. In Section \ref{sec5}, we deduce the global existence of Boltzmann equation on the whole space. The Appendix \ref{sec6} gives some general theory on Boltzmann equation, functional analysis and pseudo-differential calculus.

\subsection{Notations}
Throughout this article, we shall use the following notations. $\S(\Rd)$ is the set of Schwartz functions while $\mathscr{D}(0,\infty)$ is the set of smooth functions with compact support in $(0,\infty)$.
 For any $v\in\Rd$, we denote $\<v\>=(1+|v|^2)^{1/2}$. The gradient in $v$ is denoted by $\partial_v$. Denote a complex number $\lambda = \sigma + i\tau$. 

 The notation $a\approx b$ (resp. $a\gtrsim b$, $a\lesssim b$) for positive real function $a$, $b$ means there exists $C>0$ not depending on possible free parameters such that $C^{-1}a\le b\le Ca$ (resp. $a\ge C^{-1}b$, $a\le Cb$) on their domain. $\Re (a)$ means the real part of complex number $a$. $[a,b]=ab-ba$ is the commutator between operators. 

 For pseudo-differential calculus, we refer to Section \ref{PD} for the definition of admissible metric. Then we write $\Gamma_v=|dv|^2+|d\eta|^2$, $\Gamma_{x,v}=|dx|^2+|d\xi|^2+|dv|^2+|d\eta|^2$ to be admissible metrics, where $(x,v)\in \Rd\times\Rd$ is the space-velocity variable and $(\xi,\eta)\in \Rd\times\Rd$ is the corresponding variable in dual space (the variable after Fourier transform).
 Let $m_v$ be $\Gamma_v$-admissible weight functions, $m_{x,v}$ be $\Gamma_{x,v}$-admissible weight functions. We will write $S(m_v):=S(m_v,\Gamma_v)$, $H_v(m_v):=H_v(m_v,\Gamma_v)$ be the weighted Sobolev space on velocity variable $v$ and $H_{x,v}(m_{x,v}):=H_{x,v}(m_{x,v},\Gamma_{x,v})$ be the weighted Sobolev space on space-velocity variable $(x,v)$. 

We will use $
C_1:=\sup_{\varphi\in L^2}\frac{\|\varphi\|_{L^2}}{\|\varphi\|_{\H}}$ in Section \ref{sec2} for hard potential case. $\nu_1$ is equal to $\min\{\frac{\nu_0}{2},\frac{\nu_0}{2C_1}\}$, if $\gamma+2s\ge 0$ and to $0$ if $\gamma+2s<0$, with $\nu_0$ defined in \eqref{A_positive}.
The norm of $X_v(Y_x)$ is defined by
\begin{align*}
\|f\|_{X_v(Y_x)}:=\big\|\|f\|_{Y_x}\big\|_{X_v}.
\end{align*}

\section{Hypoelliptic estimate of the linearized Boltzmann operator}\label{sec2}

In this section, we are trying to solve equation 
\begin{align}\label{eq18}
(v\cdot\nabla_x+A)g=f,
\end{align}where $g$ is unknown and $f\in\S$. Here $A$ is a elliptic-type operator proved in the Theorem \ref{Thm0}. Thus a standard-type argument in solving elliptic equation but in the language of pseudo-differential calculus will provide the solution and regularity to equation \eqref{eq18}. 
We will also consider its dual equation
\begin{align*}
(2\pi iv\cdot \xi+A)g=f
\end{align*} in $L^2_v$, in order to apply the spectrum analysis to operator $2\pi iv\cdot \xi+A$. These result allows us to apply the semigroup theory on $L^2_{x,v}$ and $L^2_v$.

\subsection{Splitting of the linearized operator}
\begin{proof}[Proof of Theorem \ref{Thm0}]
1. We will continue the argument in \cite{Deng2020a,Global2019} and notice that the following statement are valid for both hard and soft potential. 
By Section 3 in \cite{Deng2020a}, the linearized Boltzmann operator can be written into 
 \begin{align*}
   L&=-b^w+\K^w,
 \end{align*}where $b\in S(a)$, $\K\in S(\<v\>^{\gamma+2s})$ are defined as the following.
Fix $0<\delta\le 1$. Let $\varphi(t)$ be a positive smooth radial function that equal to $1$ when $|t|\le 1/4$ and $0$ when $|t|\ge 1$.
Let $\varphi_\delta(v)=\varphi(|v|^2/\delta^2)$ and $\widetilde{\varphi}_\delta(v)=1-\varphi_\delta(v)$. Then $\varphi_\delta(v)=\varphi(|v|^2/\delta^2)$ equal to $0$ for $|v|\ge \delta$ and $1$ for $|v|\le \delta/2$. Then for $f\in\S$,
\begin{align*}  
b^wf:&=-\int\int B\varphi_\delta(v'-v)(\mu'_*)^{1/2}(f'-f)\big((\mu_*)^{1/2}-(\mu'_*)^{1/2}\big)\,dv_*d\sigma\\
&\quad-\int\int B\varphi_\delta(v'-v)\mu'_*(f'-f)\,dv_*d\sigma \\
&\quad + f(v)\int\int B\widetilde{\varphi}_\delta(v'-v)\mu_*\,dv_*d\sigma,\\
\K^wf:&=\int\int B(\mu_*)^{1/2}\left((\mu')^{1/2}f'_*-\mu^{1/2}f_*\right)\,dv_*d\sigma,\\
&\quad+ \int\int B\widetilde{\varphi}_\delta(v'-v)(\mu_*)^{1/2}(\mu'_*)^{1/2}f'\,dv_*d\sigma,\\
&\quad+f(v)\int\int B \varphi_\delta(v'-v)(\mu'_*-\mu_*)\,dv_*d\sigma,\\
&\quad+f(v)\int\int B \varphi_\delta(v'-v)(\mu'_*)^{1/2}((\mu_*)^{1/2}-(\mu'_*)^{1/2})\,dv_*d\sigma,
\end{align*}where we use Carleman representation for writing into the form of pseudo-differential operator, cf. \cite{Deng2020a,Global2019}.
These operators satisfy
\begin{equation}\label{eq13}\begin{split}
\Re(b^wf,f)_{L^2}+C\|&\<v\>^{\gamma/2+s}f\|^2_{L^2}\approx \|(a^{1/2})^wf\|^2_{L^2},\\
|(\K^w f,f)_{L^2}|&\le C\|\<v\>^{\gamma/2+s}f\|^2_{L^2}.
\end{split}
\end{equation}
By Proposition 2.1 in \cite{Alexandre2012}, there exists $\nu_0>0$ such that 
\begin{align}\label{eq17}
\nu_0\|(a^{1/2})^w(I-P)f\|^2_{L^2}\le (-Lf,f)_{L^2}.
\end{align}
Choosing $K_0>0$ in \eqref{a} sufficiently large, we have $\<v\>^{\gamma/2+s}\le a^{1/2}$ and hence, $\|\<v\>^{\gamma/2+s}f\|_{L^2}\le C\|a^{1/2}f\|_{L^2}$. By Lemma \ref{inverse_bounded_lemma}, we have  
\begin{align}
\|\<v\>^{\gamma/2+s}(I-P)f\|^2_{L^2}&\le C(-Lf,f)_{L^2},\notag\\
\|\<v\>^{\gamma/2+s}f\|^2_{L^2}&\le C ((-L+P\<v\>^{\gamma+2s}Pf,f)_{L^2}.\label{eq14}
\end{align}
In particular, when $\gamma+2s\le 0$, we have 
\begin{align*}
\|\<v\>^{\gamma/2+s}f\|^2_{L^2}&\le C((-L+P)f,f)_{L^2}.
\end{align*}
Therefore, by \eqref{eq13}\eqref{eq14}, choosing $C>0$ sufficiently large, we have 
\begin{align*}
\|(a^{1/2})^wf\|^2_{L^2}&\le C\big( \Re(b^wf,f)_{L^2}+\Re(-\K^wf,f)_{L^2}+C\|\<v\>^{\gamma/2+s}f\|^2_{L^2}\big)\\
&\le C'((-L+P\<v\>^{\gamma+2s}P)f,f)_{L^2},
\end{align*}and similarly for soft potential $\gamma+2s< 0$, 
\begin{align*}
\|(a^{1/2})^wf\|^2_{L^2}
&\le C((-L+P)f,f)_{L^2}.
\end{align*}

2. Now we can write 
\begin{equation*}
\begin{aligned}
A &:= \left\{\begin{aligned}
&-L + P\<v\>^{\gamma+2s}P,&\text{ if }\gamma+2s\ge 0, \\
&-L + P,&\text{ if }\gamma+2s\le 0.
\end{aligned}\right.\\
K &:= A+L.
\end{aligned}
\end{equation*} The symbol of $A$ belongs to $S(a)$ is given by Theorem 3.1 and 3.2 in \cite{Deng2020a}. On the other hand, for the symbol of $K$, we write
\begin{align*}
P\<v\>^{\gamma+2s}Pf &= \sum^{d+1}_{i=0}(\<v\>^{\gamma+2s}Pf,\psi_i)_{L^2}\,\psi_i(v)\\
&= \int\sum^{d+1}_{i=0} \widehat{f}(\eta)\F(P\<\cdot\>^{\gamma+2s}\psi_i)(\eta)\psi_i(v)\,d\eta\\
&= \int e^{2\pi i v\cdot\eta}\widehat{f}(\eta)\sum^{d+1}_{i=0}e^{-2\pi i v\cdot\eta}\F(P\<\cdot\>^{\gamma+2s}\psi_i)(\eta)\psi_i(v)\,d\eta.
\end{align*}Thus $K$ has symbol $\sum^{d+1}_{i=0}e^{-2\pi i v\cdot\eta}\F(P\<\cdot\>^{\gamma+2s}\psi_i)(\eta)\psi_i(v)\in S(\<v\>^{-k}\<\eta\>^{-l})$, for $k,l>0$ as a standard pseudo-differential operator, since $\psi_i\in\S$ has exponential decay. By Theorem 4.28 in \cite{Zworski2012}, we have $\lim_{|(v,\eta)|\to\infty}\<v\>^{-k}\<\eta\>^{-l}$ and thus $K$ is compact on $L^2_v$.  
\qe\end{proof}

\subsection{Hypoellipticity of $v\cdot\nabla_x+A$}
If $\gamma+2s\ge 0$, we have $\|\cdot\|_{L^2}\le C\|(a^{1/2})^w(\cdot)\|_{L^2}$ by Lemma \ref{inverse_bounded_lemma}, since $1\le a$ and $(a^{1/2})^w$ is invertible. We thus write 
\begin{align}\label{C1}
C_1:=\sup_{f\in L^2}\frac{\|\varphi\|_{L^2}}{\|\varphi\|_{\H}}.
\end{align}
\begin{Thm}\label{L2xv_regularity}
Write $L^2=L^2_{x,v}(\R^{2d})$, $\H=H_{x,v}(a^{1/2})(\R^{2d})$, $\S=\S(\R^{2d})$ in this theorem. Denote 
\begin{align*}
\|\cdot\|_H:=\|\cdot\|_{L^2}+\|\cdot\|_{\H},\quad  \|\cdot\|_{H_{-1}}:=\min\{\|\cdot\|_{L^2},\|\cdot\|_{H(a^{-1/2})}\}.
\end{align*}
Let 
\begin{equation*}\left\{
\begin{aligned}
\Re\zeta>-\frac{\nu_0}{C_1},\ \text{ if }\gamma+2s\ge 0,\\
\Re\zeta>0,\qquad \text{ if }\gamma+2s<0.
\end{aligned}\right.
\end{equation*}
Then for any $f\in\S$, there exists unique $g\in H(\Lambda_k)$ such that
\begin{align*}
(\zeta I+ v\cdot\nabla_x+A)g=f,
\end{align*}where $\Lambda_k = (K_0+|\xi|^4+|v|^2+|\eta|^2)^k$ with $K_0>>1$.
Hence, the range of $\zeta I+ v\cdot\nabla_x+A$ with domain $H(a)\cap H(\<v\>\<\xi\>)$ is dense in $L^2$.
\end{Thm}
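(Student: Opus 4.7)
The plan is to apply a Lax--Milgram-type argument, implemented via Galerkin approximation, to the sesquilinear form
\begin{align*}
B_\zeta(g,h) := \big((\zeta I + v\cdot\nabla_x + A)g,\, h\big)_{L^2_{x,v}}.
\end{align*}
Since $v\cdot\nabla_x$ is formally skew-adjoint on $L^2_{x,v}$, the real part of this form retains only the contributions from $\zeta$ and from $A$. Theorem \ref{Thm0} supplies $\Re(Ag,g)_{L^2_{x,v}} \geq \nu_0\|g\|_{\H}^2$, and the hypothesis on $\Re\zeta$ combined with the constant $C_1$ from \eqref{C1} absorbs the $\Re\zeta\,\|g\|_{L^2}^2$ term to yield the key coercivity estimate
\begin{align*}
\Re B_\zeta(g,g) \,\ge\, c\,\|g\|_{\H}^2
\end{align*}
for some $c>0$ and all $g\in\S$.

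Because $v\cdot\nabla_x$ does not map $\H$ continuously into $H(a^{-1/2})$, Lax--Milgram cannot be applied directly on $\H$. I would therefore introduce an orthonormal basis $\{e_n\}$ of $L^2_{x,v}$ consisting of eigenfunctions of a reference self-adjoint elliptic operator whose weight dominates $\Lambda_k$ (for instance a tensor product of Hermite-type functions in $(x,v)$), and work in finite-dimensional subspaces $V_N=\text{span}\{e_1,\dots,e_N\}$. On $V_N$ the bilinear form $B_\zeta$ is continuous and coercive, so the finite-dimensional Lax--Milgram lemma produces $g_N\in V_N$ with $B_\zeta(g_N,h)=(f,h)_{L^2}$ for all $h\in V_N$. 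Coercivity then delivers the uniform bound $\|g_N\|_{\H}\le C\|f\|_{H_{-1}}$, and extraction of a weakly convergent subsequence $g_N\rightharpoonup g$ in $\H$ yields a distributional solution of $(\zeta I+v\cdot\nabla_x+A)g=f$. Uniqueness is immediate since any solution of the homogeneous equation must satisfy $\Re B_\zeta(g,g)=0$, hence $g=0$.

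The main obstacle I anticipate is upgrading this weak solution to membership in $H(\Lambda_k)$. Since $f\in\S$ and $Ag\in H(a^{-1/2})$ by continuity of $A:\H\to H(a^{-1/2})$, the equation immediately gives $v\cdot\nabla_x g\in H(a^{-1/2})$. From here one bootstraps using the hypoelliptic structure: the Kolmogorov-type commutator $[v\cdot\nabla_x,\partial_{v_j}]=-\partial_{x_j}$ trades $\eta$-regularity created by the fractional diffusion in $A$ for $y$-regularity, and the pseudo-differential calculus associated to the $\Gamma_{x,v}$-admissible metric allows one to iterate this gain until the weight $\Lambda_k$ is matched for any $k$. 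Once $g\in H(\Lambda_k)$ for $k$ large enough, the embedding $H(\Lambda_k)\subset H(a)\cap H(\<v\>\<y\>)$ holds because $\Lambda_k$ dominates both weights, and the density of the range in $L^2$ follows from the density of $\S$ in $L^2$. The hypoelliptic bootstrap is the most delicate step, as the commutators between $v\cdot\nabla_x$, the $S(a)$-symbol of $A$, and the weight $\Lambda_k$ all have to be controlled simultaneously within the admissible metric calculus.
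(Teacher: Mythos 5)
Your Galerkin replacement for step one is a legitimate alternative route: the paper instead establishes the coercivity $\Re(\bar\zeta\varphi - v\cdot\nabla_x\varphi + A\varphi,\varphi)_{L^2}\ge C_\zeta\|\varphi\|_H^2$ for $\varphi\in\S$, notes that the map $\psi=(\bar\zeta - v\cdot\nabla_x + A)\varphi\mapsto(f,\varphi)$ is then bounded in $\|\psi\|_{H_{-1}}$, and invokes Hahn--Banach extension together with Riesz representation in $\H=H(a^{-1/2})^*$ and in $L^2$ to produce the weak solution $g\in\H\cap L^2$ directly. Both are standard dual-variational constructions; the Galerkin version adds the minor extra burden of checking that the span of the reference basis is dense in $\S$ and that the integration by parts $B_\zeta(g_N,h)=(g_N,(\bar\zeta-v\cdot\nabla_x+A)h)_{L^2}$ passes to the weak limit, which is fine for Hermite-type bases.

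Where your proposal has a genuine gap is in the regularity bootstrap. You correctly identify the hypoelliptic mechanism (the $S(a)$ diffusion in $\eta$ is traded for $y$-regularity through the commutator with $v\cdot\nabla_x$), but the a priori estimate $\|\Lambda_k^w g\|_H\lesssim\|g\|_H+\|\Lambda_k^w f\|_{H_{-1}}$ cannot be applied directly to the weak solution $g$, because $\Lambda_k^w g$ is not a priori a well-defined element of $L^2$ when all you know is $g\in\H\cap L^2$. The paper resolves this by testing the weak formulation against $\varphi=\Phi_\delta^w g$ for a family of bounded mollifying symbols $\Phi_\delta=(K_0+\delta^2(|y|^4+|v|^2+|\eta|^2))^{-n/2}\in S(1)$ uniformly in $\delta$, controlling the commutators $[\Phi_\delta^w,v\cdot\nabla_x]\in Op(\delta^{1/2}\Phi_\delta)$ and $[\Phi_\delta^w,A]\in Op(\delta a\Phi_\delta)$, and passing $\delta\to 0$ using the definition \eqref{sobolev_space}. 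Without this regularization, the induction is formal. A second, smaller omission: the weight $\Lambda_k=(K_0+|y|^4+|v|^2+|\eta|^2)^k$ is tuned, not generic — the exponent $4$ on $|y|$ is exactly what gives $\<\eta\>\<y\>\lesssim\Lambda_{3/4}$ by Young's inequality (\eqref{eq10}), so that $[\Lambda_k^w,v\cdot\nabla_x]\in Op(\<\eta\>\<y\>\Lambda_{k-1})\subset Op(\Lambda_{k-1/4})$ yields the fractional gain your sketch implicitly relies on. With these two ingredients filled in, your outline closes to the paper's proof.
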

\begin{Rem}\label{L2xv_rem}
The operator $v\cdot\nabla_x$ can be replaced by its adjoint $-v\cdot\nabla_x$.
\end{Rem}
\begin{proof}
Write $F=v\cdot\nabla_x$. Then $F$ is a pseudo-differential operator with symbol in $S(\<v\>\<\xi\>)$, $\partial_vF\in Op(\<\xi\>)$ and $(Ff,f)_{L^2}=(f,-Ff)_{L^2}$ for $f\in\S(\R^{2d})$.

1. Fix any $f\in\S$, we are going to find the strong solution of
\begin{align*}
  \zeta g + F g + Ag = f.
\end{align*}
We start with finding the weak solution.
For any $\varphi\in\S$, by \eqref{A_positive}\eqref{C1}, we have
\begin{equation*}
  \Re(\bar{\zeta}\varphi-F\varphi+A\varphi,\varphi)_{L^2}\ge \nu_0\|\varphi\|^2_{\H} + \Re\bar{\zeta}\|\varphi\|^2_{L^2}\ge C_{\zeta}\|\varphi\|^2_{H},
\end{equation*}
Then for any $f$ satisfying $\|f\|_{H_{-1}}<\infty$,
\begin{align*}
  \|\varphi&\|^2_{H}\le C\|\bar{\zeta}\varphi-F\varphi+A\varphi\|_{H_{-1}}\|\varphi\|_{H},\\
  |(f,\varphi)|&\le C\|f\|_{H_{-1}}\|\varphi\|_{H}\le C\|\bar{\zeta}\varphi-F\varphi+A\varphi\|_{H_{-1}}.
\end{align*}
Let $\Im(\bar{\zeta} I-F+A):=\{(\bar{\zeta} I-F+A)\varphi:\varphi\in\S\}$. 
The operator $T_1:H(a^{-1/2})\supset \Im(\bar{\zeta} I-F+A)\to \C$ and $T_2:L^2\supset \Im(\bar{\zeta} I-F+A)\to \C$ sending $\psi:=\bar{\zeta}\varphi-F\varphi+A\varphi$ to $(f,\varphi)$ are linear continuous.
Hence $T_1$, $T_2$ extend to a linear functional on $H(a^{-1/2})$ and $L^2$ respectively. Note that from Theorem 2.6.17 in \cite{Lerner2010}, $(\H)^*=H(a^{-1/2})$ and $(L^2)^*=L^2$, there exists unique $u_1\in H(a^{1/2})$ and $u_2\in L^2$ such that for $\psi\in \S$,
\begin{align*}
  (u_1,\psi)_{L^2} = T_1\psi,\ \ \ (u_2,\psi)_{L^2} = T_2\psi.
\end{align*}Thus $g:=u_1=u_2\in H(a^{1/2})\cap L^2$ satisfies that for $\varphi\in\S$,
\begin{align}\label{eq9}
  (g, \bar{\zeta}\varphi-F\varphi+A\varphi)_{L^2} = (f,\varphi)_{L^2}.
\end{align}

2. We next show that $g\in \HH\cap H(\<v\>\<\xi\>)$.
Let $k\in\R$, $\Lambda_k = (K_0+|\xi|^4+|v|^2+|\eta|^2)^k$ be admissible metric and $n=\max\{2k,1,\gamma+2s,2s\}$. Then by Lemma \ref{inverse_pseudo}, we choose $K_0>1$ so large that $\Lambda^w_k\in Op(\Lambda_k)$ is invertible with $(\Lambda^w_k)^{-1}\in Op(\Lambda_{-k})$. Such choice of $n$ assures $\varphi$ is good enough that the following estimates about adjoint are valid by Lemma \ref{formal_ajoint}. Also equation \eqref{eq9} is valid for $\varphi\in H(\Lambda_n)$ by density. Thus for any $\varphi\in H(\Lambda_n)$,
\begin{align}\notag
  \Re(\zeta &\varphi+F\varphi+A\varphi,\Lambda^w_{k}\Lambda^w_{k}\varphi)_{L^2}\\
  &= \Re(\Lambda^w_{k}(\zeta\varphi+F\varphi+A\varphi),\Lambda^w_{k}\varphi)_{L^2}\notag\\
  &= \Re((\zeta I+F+A)\Lambda^w_{k}\varphi,\Lambda^w_{k}\varphi)_{L^2} + \Re([\Lambda^w_{k},F+A]\varphi,\Lambda^w_{k}\varphi)_{L^2}\notag\\
  &\ge \frac{1}{C}\|\Lambda^w_{k}\varphi\|^2_{H} - |([\Lambda^w_{k},F+A]\varphi,\Lambda^w_{k}\varphi)_{L^2}|.\label{eq8}
\end{align}
Note that $\<\eta\>\<v\>\lesssim (1+|v|^2+|\eta|^2)$. 
Then by Young's inequality with $\frac{1}{3}+\frac{1}{3/2}=1$, we have
\begin{align}\label{eq10}
  \<\eta\>\<\xi\>\lesssim \<\eta\>^{3/2}+\<\xi\>^3\lesssim (1+|\eta|^{3/2}+|\xi|^3)^{\frac{4}{3}\cdot \frac{3}{4}}\lesssim (1+|\eta|^2+|\xi|^4)^{3/4}\lesssim \Lambda_{3/4}.
\end{align}
To control the commutators, by \eqref{sharp_theta} and the comments therein, we obtain  
\begin{align*}
  [\Lambda^w_k,F] &\in Op(\<\eta\>\<\xi\>\Lambda_{k-1})\subset Op(\Lambda_{k-\frac{1}{4}}),\\
  [\Lambda^w_k,A] &\in Op(a\Lambda_{k-\frac{1}{2}}),
\end{align*}and hence 
\begin{align*}
  \big|\big([\Lambda^w_k,F]\varphi,\Lambda^w_k\varphi\big)_{L^2}\big|&=\big|\big(\underbrace{(\Lambda^w_{-\frac{1}{8}})^{-1}[\Lambda^w_k,F]}_{\in Op(\Lambda_{k-1/8})}\varphi,
  \underbrace{\Lambda^w_{-\frac{1}{8}}\Lambda^w_k}_{\in Op(\Lambda_{k-1/8})}\varphi\big)_{L^2}\big|
  \lesssim \big\|\Lambda^w_{k-\frac{1}{8}}\varphi\big\|^2_{L^2},\\
  \big|\big([\Lambda^w_k,A]\varphi,\Lambda^w_k\varphi\big)_{L^2}\big|&=\big|\big(\underbrace{(\Lambda^w_{-\frac{1}{4}})^{-1}((a^{1/2})^w)^{-1}[\Lambda^w_k,A]}_{\in Op(a^{1/2}\Lambda_{k-1/4})}\varphi,
  \underbrace{\Lambda^w_{-\frac{1}{4}}(a^{1/2})^w\Lambda^w_k}_{\in Op(a^{1/2}\Lambda_{k-1/4})}\varphi\big)_{L^2}\big|\\
  &\lesssim \big\|(a^{1/2})^w\Lambda^w_{k-\frac{1}{4}}\varphi\big\|^2_{L^2}.
\end{align*}
Therefore, \eqref{eq8} becomes 
\begin{align}\label{eq12}
  \big\|\Lambda^w_k\varphi\big\|^2_{H}
  &\lesssim  \big|\big([\Lambda^w_k,F+A]\varphi,\Lambda^w_k\varphi\big)_{L^2}\big|+ \Re(\zeta\varphi+F\varphi+A\varphi,\Lambda^w_k\Lambda^w_k\varphi)_{L^2}\notag \\
  &\lesssim \big\|\Lambda^w_{k-\frac{1}{8}}\varphi\big\|^2_{L^2} + \big\|(a^{1/2})^w\Lambda^w_{k-\frac{1}{4}}\varphi\big\|^2_{L^2} + \Re\big(\varphi,(\bar{\zeta}-F+A)\Lambda^w_k\Lambda^w_k\varphi\big)_{L^2},
\end{align}where the constant depends on $\zeta$.

Let $\delta\in(0,1)$ and $\Phi_\delta = (K_0+\delta^2(|\xi|^4+|v|^2+|\eta|^2))^{-n/2}$. Then we choose $K_0>0$ sufficiently large such that $\Phi^w_\delta\in S(\Phi_{\delta})$ is invertible by Lemma \ref{inverse_pseudo}.
Choose $\varphi = \Phi^w_\delta g \in H(\Lambda_n)$, then
\begin{align}\label{eq7}
  \big|\big(\varphi,(\bar{\zeta}&-F+A)\Lambda^w_k\Lambda^w_k\varphi\big)_{L^2}\big|\notag\\
  &= \big|\big(\Phi^w_\delta g ,(\bar{\zeta}-F+A)\Lambda^w_k\Lambda^w_k\varphi\big)_{L^2}\big|\notag\\
  &\le \big|\big(g,[\Phi^w_\delta,-F+A]\Lambda^w_k\Lambda^w_k\varphi\big)_{L^2}\big|
  +\big|\big(g,(\bar{\zeta}-F+A)\Phi^w_\delta\Lambda^w_k\Lambda^w_k\varphi\big)_{L^2}\big|.
\end{align}
For the commutator $[\Phi^w_\delta,-F+A]$, by \eqref{eq10},
\begin{align*}
  [\Phi_\delta^w,-F]&\in Op\big(\delta^{1/2}\Phi_\delta\big),\ \
[\Phi_\delta^w,A]\in Op\big(\delta a\Phi_\delta\big),
\end{align*}uniformly in $\delta$ and thus
\begin{align*}
  \big|\big(g,[\Phi^w_\delta,-F]\Lambda^w_k\Lambda^w_k\varphi\big)&_{L^2}\big|
  = \big|\big(\Lambda^w_k\Phi^w_\delta g,\underbrace{(\Lambda^w_k)^{-1}(\Phi^w_\delta)^{-1} [\Phi^w_\delta,F]\Lambda^w_k\Lambda^w_k}_{\in Op(\delta^{1/2}\Lambda_k)}\Phi^w_\delta g\big)_{L^2}\big|\\
  &\quad\lesssim \delta^{1/2}\big\|\Lambda^w_k\Phi^w_\delta g\big\|^2_{L^2}\\
\big|\big(g,[\Phi^w_\delta,A]\Lambda^w_k\Lambda^w_k\varphi\big)_{L^2}\big|
&=\big|\big((a^{1/2})^w\Lambda^w_k\Phi^w_\delta g, \underbrace{((a^{1/2})^w)^{-1}(\Lambda^w_k)^{-1}(\Phi^w_\delta)^{-1}[\Phi^w_\delta,A]\Lambda^w_k}_{\in Op(\delta a^{1/2})}\Lambda^w_k\Phi^w_\delta g\big)_{L^2}\big|\\
&\lesssim \delta\big\|(a^{1/2})^w\Lambda^w_k\Phi^w_\delta g\big\|^2_{L^2}.
\end{align*}
Together with \eqref{eq9}, \eqref{eq7} becomes 
\begin{align*}
  \big|\big(\varphi,&(\bar{\zeta}-F+A)\Lambda^w_k\Lambda^w_k\varphi\big)_{L^2}\big|\\
  &\le \big|\big(g,[\Phi^w_\delta,(-F+A)]\Lambda^w_k\Lambda^w_k\varphi\big)_{L^2}\big|
  +\big|\big(g,(\bar{\zeta}-F+A)\Phi^w_\delta\Lambda^w_k\Lambda^w_k\varphi\big)_{L^2}\big|\\
  &\lesssim \delta^{1/2}\big\|\Lambda^w_k\varphi\big\|^2_{L^2}
  +\delta\big\|(a^{1/2})^w\Lambda^w_k\varphi\big\|^2_{L^2}
  +\big|\big(\Lambda^w_k\Phi^w_\delta f,\Lambda^w_k\varphi\big)_{L^2}\big|\\
  &\lesssim\delta^{1/2}\big\|\Lambda^w_k \varphi\big\|^2_{L^2}  +\delta\big\|(a^{1/2})^w\Lambda^w_k \varphi\big\|^2_{L^2} +C_\kappa\big\|\Lambda^w_k\Phi^w_\delta f\big\|^2_{H_{-1}} +\kappa\big\|\Lambda^w_k \varphi\big\|^2_{H},
\end{align*}for $\kappa>0$. 
Note that $\Phi_\delta\le 1$. Return to \eqref{eq12}, noticing $\Phi_\delta\in S(1)$ uniformly in $\delta$ and choosing $\delta,\kappa$ sufficiently small, we have $\|\Phi_{\delta}^w(\cdot)\|_{L^2}\le C\|\cdot\|_{L^2}$ and 
\begin{align*}
  \|(a^{1/2})^w\Lambda^w_k\varphi\|_{H(\Phi^w_\delta)}+\|\Lambda^w_k\varphi\|_{H(\Phi^w_\delta)}
  &\lesssim \|\Lambda^w_{k-\frac{1}{8}}g\|_{L^2} + \|(a^{1/2})^w\Lambda^w_{k-\frac{1}{4}}g\|_{L^2}
  + \|\Lambda^w_k f\|_{H_{-1}},
\end{align*}whenever the right hand side is well-defined, where the constant is independent of $\delta$.
Recall the Definition \eqref{sobolev_space} of Sobolev space $H(\Phi^w_\delta)$ and $H(1)=L^2$, we let $\delta\to 0$ to conclude that
\begin{align*}
  \|\Lambda^w_k\varphi\|_{H}
  &\lesssim \|\Lambda^w_{k-\frac{1}{8}}g\|_{H}+\|\Lambda^w_k f\|_{H_{-1}}\lesssim \cdots\lesssim \|\Lambda^w_{-n}g\|_{H}+\|\Lambda^w_k f\|_{H_{-1}}.
\end{align*}if the right hand side is well-defined. 
Since $g\in H(a^{1/2})\subset H(\Lambda_{-n})$, we obtain that $g\in H(\Lambda_k)$ for any $k\ge 0$. 
Hence $g\in H(a)\cap H(\<v\>\<\xi\>)$ if we choose $k$ sufficiently large and the range of $\zeta I+F+A$ with domain $H(a)\cap H(\<v\>\<\xi\>)$ is dense in $L^2$.
\qe\end{proof}

\begin{Thm}\label{L2v_regularity}
Write $L^2=L^2_{v}(\R^{d})$, $\H=H_{v}(a^{1/2})(\R^{d})$, $\S=\S(\R^{d})$ in this theorem. Denote 
\begin{align*}
\|\cdot\|_H:=\|\cdot\|_{L^2}+\|\cdot\|_{\H}, \quad\|\cdot\|_{H_{-1}}:=\min\{\|\cdot\|_{L^2},\|\cdot\|_{H(a^{-1/2})}\}.
\end{align*}
Let $\xi\in\R^d$ be fixed, $K_0>>1$,  $\Lambda_{k,l}:=(K_0+|v|^2)^{k/2}(K_0+|\eta|^2)^{l/2}$ and 
\begin{equation*}\left\{
\begin{aligned}
\Re\zeta>-\frac{\nu_0}{C_1},\ \text{ if }\gamma+2s\ge 0,\\
\Re\zeta>0,\qquad \text{ if }\gamma+2s<0.
\end{aligned}\right.
\end{equation*}
Suppose $f\in\S$, then there exists unique solution $g\in\S$ to 
\begin{align}\label{equation_zeta}
(\zeta I+2\pi i v\cdot \xi+A)g = f.
\end{align}
Moreover, for $k,l\ge 0$, 
  \begin{align}\label{varphi_v_beta}
    \|\<v\>^k\<D_v\>^l g\|_{H}\lesssim \|\<v\>^k\<D_v\>^lf\|_{H_{-1}}.
  \end{align}
\end{Thm}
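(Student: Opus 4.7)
The plan is to follow the same two-step scheme as in Theorem~\ref{L2xv_regularity}: first produce a weak solution by a Riesz-representation argument exploiting the dissipation of $A$, then bootstrap regularity by commuting the equation with the weights $\Lambda^w_{k,l}$. The structural input is that the transport-type term $M := 2\pi i\, v\cdot y$ acts as multiplication by a purely imaginary function on $v$ and is therefore skew-adjoint on $L^2_v$, so for any $\varphi\in\S$
\[
\Re(\bar\zeta\varphi - M\varphi + A\varphi,\varphi)_{L^2} = \Re\bar\zeta\,\|\varphi\|^2_{L^2} + \Re(A\varphi,\varphi)_{L^2} \ge \Re\bar\zeta\,\|\varphi\|^2_{L^2} + \nu_0\,\|\varphi\|^2_{\H}
\]
by Theorem~\ref{Thm0}. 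Under the hypothesis on $\Re\zeta$ this dominates $C_\zeta\|\varphi\|^2_H$, and repeating the Riesz argument of Theorem~\ref{L2xv_regularity} verbatim, with the duality pairing $(\H)^* = H(a^{-1/2})$ now read on $\R^d$ instead of $\R^{2d}$, yields $g\in\H\cap L^2$ satisfying $(g,(\bar\zeta I - M + A)\varphi)_{L^2} = (f,\varphi)_{L^2}$ for every $\varphi\in\S$.

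To upgrade the regularity, fix $k,l\ge 0$ and take $K_0$ large enough that $\Lambda^w_{k,l}$ is invertible in the symbol class (Lemma~\ref{inverse_pseudo}). Testing the weak formulation against $\Lambda^w_{k,l}\Lambda^w_{k,l}\varphi$ and conjugating the weight past $(\zeta I + M + A)$ produces, up to commutator errors, the lower bound $\|\Lambda^w_{k,l}\varphi\|^2_H$. The two commutators to control are $[\Lambda^w_{k,l},M]$ and $[\Lambda^w_{k,l},A]$. For the first, the Poisson bracket $\{v\cdot y,\Lambda_{k,l}\} = -y\cdot\partial_\eta\Lambda_{k,l}$ is tangent only to the $\eta$-direction, placing $[\Lambda^w_{k,l},M]\in Op(\<v\>^k\<\eta\>^{l-1})$ with the $y$-dependence absorbed into the constants. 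For the second, the standard symbolic calculus gives $[\Lambda^w_{k,l},A]\in Op(a\,\Lambda_{k-1,l-1})$, and redistributing an $a^{1/2}$ factor on each side of the pairing controls the commutator by $\|(a^{1/2})^w\Lambda^w_{k-\varepsilon,l-\varepsilon}\varphi\|^2_{L^2}$ for a small $\varepsilon>0$.

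Following the pattern in Theorem~\ref{L2xv_regularity}, we regularise by $\Phi_\delta = (K_0 + \delta^2(|v|^2+|\eta|^2))^{-n/2}$ and test with $\varphi = \Phi^w_\delta g\in H(\Lambda_{n,n})$ to justify all manipulations, then let $\delta\to 0$ after absorbing the residual $\Phi^w_\delta$-commutators, which are of order $O(\delta^{1/2})$ in the $\Lambda^w_{k,l}$-norm and $O(\delta)$ in the $(a^{1/2})^w\Lambda^w_{k,l}$-norm. This yields the a priori estimate
\[
\|\Lambda^w_{k,l} g\|_H \lesssim \|\Lambda^w_{k-\varepsilon,l-\varepsilon} g\|_H + \|\Lambda^w_{k,l} f\|_{H_{-1}}.
\]
Iterating from the starting point $g\in\H\subset H(\Lambda_{-n,-n})$ propagates regularity to arbitrary $(k,l)$, and since $f\in\S$ the right-hand side is finite for every $k,l\ge 0$; this delivers both $g\in\S$ and the bound \eqref{varphi_v_beta} via the equivalence $\Lambda_{k,l}\approx\<v\>^k\<\eta\>^l$. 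Uniqueness follows by applying the coercivity of $\bar\zeta I - M + A$ to the difference of two candidate solutions.

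The main technical hurdle is the bookkeeping for the $(v,\eta)$-split weight $\Lambda_{k,l}$, which, in contrast to the single weight $\Lambda_k$ of Theorem~\ref{L2xv_regularity}, treats the two directions asymmetrically: one must check that $[\Lambda^w_{k,l},M]$ loses order only in $\eta$ (not in $v$), while $[\Lambda^w_{k,l},A]$, whose symbol mixes $v$ and $\eta$ through $1+|\eta|^2+|\eta\wedge v|^2+|v|^2$, loses one order in each variable but is absorbed using the $a^{1/2}$ factor. Once this symbolic accounting is in place, the remainder is a direct transcription of the argument for $(v\cdot\nabla_x+A)$ given in Theorem~\ref{L2xv_regularity}.
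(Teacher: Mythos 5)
Your overall strategy — Riesz-representation weak solution using the skew-adjointness of $2\pi i v\cdot y$ and the coercivity of $A$, then a commutator bootstrap with the split weight $\Lambda^w_{k,l}$, a $\Phi_\delta$-regularization, and a final pairing to get $\|g\|_H\lesssim\|f\|_{H_{-1}}$ — is the same as the paper's. However, the commutator accounting has a genuine gap. Your claim that $[\Lambda^w_{k,l},A]\in Op(a\,\Lambda_{k-1,l-1})$, i.e.\ losing one order in \emph{each} variable simultaneously, does not follow from the symbolic calculus. The leading part of the commutator is $\partial_\eta\Lambda_{k,l}\cdot\partial_v A - \partial_v\Lambda_{k,l}\cdot\partial_\eta A$; the first piece is in $S((1+a)\Lambda_{k,l-1})$ and the second in $S((1+a)\Lambda_{k-1,l})$, so each term loses an order in \emph{one} variable only. (The anisotropic factor $|\eta\wedge v|^2$ inside $a$ prevents $\partial_v a$ from gaining a $\<v\>^{-1}$, and likewise for $\<\eta\>^{-1}$ in $\partial_\eta a$.) The a priori estimate one actually obtains is
\begin{align*}
\|\Lambda^w_{k,l}g\|_H \lesssim \|\Lambda^w_{k-\frac12,l}g\|_H + \|\Lambda^w_{k,l-\frac12}g\|_H + \|\Lambda^w_{k,l}f\|_{H_{-1}},
\end{align*}
not your single-branch $\Lambda_{k-\varepsilon,l-\varepsilon}$ recursion — which, incidentally, is also inconsistent with your own correct observation that $[\Lambda^w_{k,l},M]\in Op(\Lambda_{k,l-1})$, a term not dominated by $\Lambda_{k-\varepsilon,l-\varepsilon}$.

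This two-branch recursion does not terminate automatically: iterating it drives you to the boundary $(k,0)$ or $(0,l)$, where the loss-in-$\eta$ branch would produce $\|\Lambda^w_{k,-1/2}g\|_H$, which is bounded by $\|\Lambda^w_{k,0}g\|_H$, i.e.\ circular. The paper's step that you omit is the boundary vanishing: when $l=0$, $\partial_\eta\Lambda_{k,0}\equiv 0$, so the $\Lambda_{k,l-1}$ branch vanishes identically and $[\Lambda^w_{k,0},2\pi i v\cdot y+A]\in Op((1+a)\Lambda_{k-1,0})$, allowing a clean one-dimensional iteration $(k,0)\to(k-\tfrac12,0)\to\cdots\to(-n,0)$; symmetrically for $(0,l)$. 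These two boundary estimates anchor the full two-index recursion. Your proposal correctly notices that the $M$-commutator loses order only in $\eta$, but you do not extract the corresponding cancellation at the boundary, and the unjustified double-loss claim for the $A$-commutator conceals the need for it. The rest of your argument (weak formulation, $\Phi_\delta$ passage, duality with $H(a^{-1/2})$, uniqueness, Sobolev embedding) matches the paper.
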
\begin{Rem}
Later, the Lemma \ref{ASchwarz} will shows that the estimate \eqref{varphi_v_beta} actually give the control for $g=(\lambda I-\A)^{-1}f$.
\end{Rem}
\begin{proof}
Notice that the constants in the following estimates will depend on $y$.
Let $F=2\pi i v\cdot \xi$. Then $F$ can be regarded as a pseudo-differential operator with symbol in $S(\<v\>)$. $\partial_vF\in Op(1)$ and $(Ff,f)_{L^2_v}=(f,-Ff)_{L^2_v}$ if $f\in H(\<v\>)$.

1. A similar argument to step one in Theorem \ref{L2xv_regularity}, with replacing $L^2_{x,v}$ by $L^2_v$ and $H_{x,v}$ by $H_v$, gives that for any $f$ with $\|f\|_{H_{-1}}<\infty$, there exists unique $g\in \H\cap L^2$ such that 
\begin{align}\label{eq133}
  (g, \bar{\zeta}\varphi-F\varphi+A\varphi)_{L^2} = (f,\varphi)_{L^2},
\end{align}for all $\varphi\in\S$.

2. 
Let $k,l\in\R$, $n=2k^++2l^++2+\max\{\gamma+2s,2s\}$ and $\Lambda_{k,l} = (K_0+|v|^2)^{k/2}(K_0+|\eta|^2)^{l/2}$ be an admissible metric, where $k^+=\max\{0,k\}$.
Then by Lemma \ref{inverse_pseudo}, we choose $K_0$ sufficiently large such that $\Lambda^w_{k,l}$ is invertible and $(\Lambda^w_{k,l})^{-1}\in S(\Lambda_{-k,-l})$.
The choice of $n$ assures $\varphi$ below is good enough that the following equations on adjoint are valid by Lemma \ref{formal_ajoint}.
Since $\S$ is dense in $H(\Lambda_{n,n})$, the equation \eqref{eq133} is valid for $\varphi\in H(\Lambda_{n,n})$. For such $\varphi$, we have $\zeta\varphi+F\varphi+A\varphi\in H(\Lambda_{k,l})$ and hence 
\begin{align}
  \Re(\zeta\varphi&+F\varphi+A\varphi,\Lambda^w_{k,l}\Lambda^w_{k,l}\varphi)_{L^2_v}\notag\\
  &= \Re(\Lambda^w_{k,l}(\zeta\varphi+F\varphi+A\varphi),\Lambda^w_{k,l}\varphi)_{L^2_v}\notag\\
  &= \Re((\zeta I+F+A)\Lambda^w_{k,l}\varphi,\Lambda^w_{k,l}\varphi)_{L^2_v} + \Re([\Lambda^w_{k,l},F+A]\varphi,\Lambda^w_{k,l}\varphi)_{L^2}\notag\\
  &\ge \frac{1}{C}\|\Lambda^w_{k,l}\varphi\|^2_{H} - |([\Lambda^w_{k,l},F+A]\varphi,\Lambda^w_{k,l}\varphi)_{L^2_v}|.\label{eq144}
\end{align}
For the commutators, we have  
\begin{align*}
  [\Lambda^w_{k,l},F+A] &= \Big(\int^1_0\partial_\eta\Lambda_{k,l}\#_\theta\partial_v(F+A)\,d\theta\Big)^w
-\Big(\int^1_0\partial_v\Lambda_{k,l}\#_\theta\partial_\eta (F+A)\,d\theta\Big)^w,
\end{align*}where we denote $F$, $A$ to be their corresponding symbols for convenience of notation. By \eqref{sharp_theta},  
\begin{align*}
  \int^1_0\partial_\eta\Lambda_{k,l}\#_\theta\partial_v(F+A)\,d\theta&\in S((1+a)\Lambda_{k,l-1})\\
  \int^1_0\partial_v\Lambda_{k,l}\#_\theta\partial_\eta(F+A)\,d\theta &\in S((1+a)\Lambda_{k-1,l}).
\end{align*}In particular when $l=0$, $\partial_\eta\Lambda_{k,l}=0$ and hence $[\Lambda^w_{k,0},F+A]\in Op((1+a)\Lambda_{k-1,0})$ while when $k=0$, $\partial_v\Lambda_{k,l}=0$ and hence $[\Lambda^w_{0,l},F+A]\in Op((1+a)\Lambda_{0,l-1})$. 
Similar to the estimate on commutators in the second step of theorem \ref{L2xv_regularity}, we have 
\begin{align*}
  \big|\big([\Lambda^w_{k,l},F+A]\varphi,\Lambda^w_{k,l}\varphi\big)_{L^2_v}\big|&\lesssim \big\|\Lambda^w_{k-\frac{1}{2},l}\varphi\big\|^2_{H}+\big\|\Lambda^w_{k,l-\frac{1}{2}}\varphi\big\|^2_{H},\\
  \big|\big([\Lambda^w_{k,0},F+A]\varphi,\Lambda^w_{k,0}\varphi\big)_{L^2}\big|&\lesssim \big\|\Lambda^w_{k-\frac{1}{2},0}\varphi\big\|^2_{H},\\
  \big|\big([\Lambda^w_{0,l},F+A]\varphi,\Lambda^w_{0,l}\varphi\big)_{L^2}\big|&\lesssim \big\|\Lambda^w_{0,l-\frac{1}{2}}\varphi\big\|^2_{H},
\end{align*}
and hence \eqref{eq144} gives 
\begin{align}
  \|\Lambda^w_{k,l}\varphi\|^2_{H}
  &\lesssim \|\Lambda^w_{k-\frac{1}{2},l}\varphi\|^2_{H}+\|\Lambda^w_{k,l-\frac{1}{2}}\varphi\|^2_{H} 
+  \Re((\zeta+F+A)\varphi,\Lambda^w_{k,l}\Lambda^w_{k,l}\varphi)_{L^2_v},\label{eq11}\\
\|\Lambda^w_{k,0}\varphi\|^2_{H}&\lesssim \|\Lambda^w_{k-\frac{1}{2},0}\varphi\|^2_{H}+\Re((\zeta+F+A)\varphi,\Lambda^w_{k,0}\Lambda^w_{k,0}\varphi)_{L^2_v},\label{eq144_1}\\
\|\Lambda^w_{0,l}\varphi\|^2_{H}&\lesssim \|\Lambda^w_{0,l-\frac{1}{2}}\varphi\|^2_{H}+\Re((\zeta+F+A)\varphi,\Lambda^w_{0,l}\Lambda^w_{0,l}\varphi)_{L^2_v}.\label{eq144_2}
\end{align}where the constant depends on $\zeta$ and $y$.
Let $\delta\in(0,1)$ and define $\Phi_\delta = (1+\delta^2(|v|^2+|\eta|^2))^{-n}$.
Choose $\varphi = \Phi^w_\delta g \in H(\Lambda_{n,n})$, then $\Phi^w_\delta\Lambda^w_{k,l}\Lambda^w_{k,l}\varphi\in H(a)\cap H(\<v\>)$ and \eqref{eq133} gives 
\begin{align*}
  ((\zeta+F+A)\varphi,\Lambda^w_{k,l}\Lambda^w_{k,l}\varphi)_{L^2_v}
  &= (\Phi^w_\delta g,(\bar{\zeta}-F+A)\Lambda^w_{k,l}\Lambda^w_{k,l}\varphi)_{L^2_v}\\
  &= (g,[\Phi^w_\delta,-F+A]\Lambda^w_{k,l}\Lambda^w_{k,l}\varphi)_{L^2_v}
  +(f,\Phi^w_\delta\Lambda^w_{k,l}\Lambda^w_{k,l}\varphi)_{L^2_v}.
\end{align*}
Here  
\begin{align*}
  [\Phi^w_\delta,-F+A]\in Op(\delta(1+a)\Phi_\delta),
\end{align*}uniformly in $\delta$ and hence similar to theorem \ref{L2xv_regularity}, we have 
\begin{align*}
  \big|\big(g,[\Phi^w_\delta,-F+A]\Lambda^w_{k,l}\Lambda^w_{k,l}\varphi\big)_{L^2}\big|
  &\lesssim \delta\|\Lambda^w_{k,l}\Phi^w_\delta g\|^2_{H}.
\end{align*}
Thus
\begin{align}\label{eq35}\notag
  \big|\big((\zeta+F+A)\varphi,\Lambda^w_{k,l}\Lambda^w_{k,l}\varphi\big)_{L^2_v}\big|
  &\lesssim \delta\|\Lambda^w_{k,l}\Phi^w_\delta g\|^2_{H}
  +\big|\big(\Lambda^w_{k,l}\Phi^w_\delta f,\Lambda^w_{k,l}\Phi^w_\delta g\big)_{L^2}\big|\\
  &\lesssim 2\delta\|\Lambda^w_{k,l}\Phi^w_\delta g\|^2_{H}
  + C_\delta\|\Lambda^w_{k,l}\Phi^w_\delta f\|^2_{H_{-1}}.
\end{align}
Substitute this into \eqref{eq11}, by picking $\delta$ sufficiently small and note that $\Phi_\delta\in S(1)$ uniformly in $\delta$, we have $\|\Phi^w_\delta(\cdot)\|_{L^2}\le C\|\cdot\|_{L^2}$ and 
\begin{align*}
 \|(a^{1/2})^w\Lambda^w_kg\|_{H(\Phi^w_\delta)}+\|\Lambda^w_kg\|_{H(\Phi^w_\delta)} 
 &\lesssim \|\Lambda^w_{k-\frac{1}{2},l} g\|_{H} +\|\Lambda^w_{k,l-\frac{1}{2}}g\|_{H}+\|\Lambda^w_{k,l}f\|_{H_{-1}},
\end{align*}whenever the right hand side is well defined, where the constant is independent of $\delta$.
Recall the Definition \eqref{sobolev_space} and let $\delta\to0$, we obtain 
\begin{align}
  \|\Lambda^w_{k,l}g\|_{H}
  &\lesssim \|\Lambda^w_{k-\frac{1}{2},l} g\|_{H} +\|\Lambda^w_{k,l-\frac{1}{2}}g\|_{H}+\|\Lambda^w_{k,l}f\|_{H_{-1}},\label{eq155}
\end{align}
whenever the right hand side is well-defined. Similarly, substituting \eqref{eq35} into \eqref{eq144_1}\eqref{eq144_2} and letting $\delta\to0$, we have 
\begin{align}\label{eq166}
\|\Lambda^w_{k,0}\, g\|_{H}&\lesssim \|\Lambda^w_{k-\frac{1}{2},0}\,g\|_{H}+\|\Lambda^w_{k,0}\, f\|_{H_{-1}},\\
\|\Lambda^w_{0,l}\, g\|_{H}&\lesssim \|\Lambda^w_{0,l-\frac{1}{2}}\,g\|_{H}+\|\Lambda^w_{0,l}\,f\|_{H_{-1}}\label{eq166_1}.
\end{align}

3. Let $k,l\in\R$, and recall $\|g\|_H<\infty$. Note that $\|\Lambda^w_{k_1,0} g\|_{L^2}\lesssim \|\Lambda^w_{k_2,0} g\|_{L^2}$ for $k_1\le k_2$. Then \eqref{eq166} yields 
\begin{align*}
\|\Lambda^w_{k,0\,} g\|_{H}
&\lesssim \|\Lambda^w_{k-\frac{1}{2},0}g\|_{H}+\|\Lambda^w_{k,0} f\|_{H_{-1}}\lesssim\cdots\lesssim\|\Lambda^w_{-n,0}g\|_{H}+\|\Lambda^w_{k,0}f\|_{H_{-1}}.
\end{align*}
Similarly \eqref{eq166_1} gives 
\begin{align*}
\|\Lambda^w_{0,l} g\|_{H}&\lesssim \|\Lambda^w_{0,-n}g\|_{H}+\|\Lambda^w_{0,l} f\|_{H_{-1}}.
\end{align*}
Finally, substitute these two estimate into \eqref{eq155},
\begin{align}
\|\Lambda^w_{k,l}g\|_{H}
  &\lesssim \|\Lambda^w_{k-\frac{1}{2},l}g\|_{H}+\|\Lambda^w_{k,l-\frac{1}{2}} g\|_{H} + \|\Lambda^w_{k,l}f\|_{H_{-1}}\notag\\
&\lesssim\cdots\notag\\
&\lesssim \|\Lambda^w_{0,l}g\|_{H}+\|\Lambda^w_{k,0} g\|_{H} + \|\Lambda^w_{k,l}f\|_{H_{-1}}\notag\\
&\lesssim \|\Lambda^w_{0,-n}g\|_{H}+\|\Lambda^w_{-n,0}g\|_{H}+\|\Lambda^w_{k,l}f\|_{H_{-1}}\notag\\
&\lesssim \|g\|_{H}+\|\Lambda^w_{k,l}f\|_{H_{-1}},\label{esti1}
\end{align}for $f$ satisfying $\|\Lambda^w_{k,l}f\|_{H_{-1}}<\infty$.
Therefore if $f\in\S$, then $g\in H(\Lambda_{k,l})$ for any $k,l\ge 0$ and by Sobolev embedding $g\in\S$ is a strong solution to \eqref{equation_zeta}. Taking inner product in \eqref{equation_zeta} with $g$, we obtain  
\begin{align*}
\Re((\zeta I+2\pi i v\cdot \xi+A)g,g)_{L^2} &= (f,g)_{L^2},\\
\|g\|^2_{H}\lesssim \Re\zeta\|g\|^2_{L^2}+\nu_0\|g\|^2_{\H}&\lesssim \|f\|_{H_{-1}}\|g\|_H.
\end{align*}Thus $\|g\|_H\lesssim \|f\|_{H_{-1}}$. Substitute this into \eqref{esti1}, we obtain \eqref{varphi_v_beta}, since $\|\<v\>^k\<D_v\>^l(\cdot)\|_{L^2}$ is equivalent to $\|\Lambda^w_{k,l}(\cdot)\|_{L^2}$. 
\qe\end{proof}

\section{Spectrum structure}\label{sec3}
With the tools in Section \ref{sec2}, we can give the rigorous proof of generating strongly continuous semigroup as well as the spectrum structure to $\B$ in this section. We will denote $L^2=L^2(\R^d_v)$ in this section. 

Recall that $C_1:=\sup_{f\in L^2}\|\varphi\|_{L^2}/\|\varphi\|_{\H}$.
Let $\zeta\in\C$ satisfies $\Re\,\zeta>-2\nu_1$ with
\begin{equation}\label{nu1}\nu_1=\left\{
\begin{aligned}
\min\{\frac{\nu_0}{2},\frac{\nu_0}{2C_1}\},\ \text{ if }\gamma+2s\ge 0,\\
0,\ \text{ if }\gamma+2s<0,
\end{aligned}\right.
\end{equation}
where $\nu_0>0$ is given in \eqref{eq17}.

\subsection{Generating strongly continuous semigroup}
By Theorem \ref{L2v_regularity} and \eqref{A_positive}, the operator $-\zeta-2\pi i \xi\cdot v-A$ with domain $D(-\zeta-2\pi i \xi\cdot v-A):=\S$ on Banach space $L^2(\R^d_v)$ is densely defined, dissipative and has dense range in $L^2_v$. Thus by Theorem \ref{semigroup}, its closure $\overline{-\zeta-2\pi i \xi\cdot v-A}$ generates a contraction semigroup. Since $K$ is compact on $L^2$, by Theorem \ref{semigroup_bounded_per}, $\overline{-\zeta-2\pi i \xi\cdot v+L}$ generates a strongly continuous semigroup on $L^2(\R^d_v)$. 

Similarly, Theorem \ref{L2xv_regularity} and \eqref{A_positive} show that the operator $-\zeta I- v\cdot\nabla_x-A$ with domain $H(a)\cap H(\<v\>\<\xi\>)$ on $L^2(\R^{2d}_{x,v})$ is densely defined, dissipative and has dense range. Thus its closure $\overline{-\zeta I- v\cdot\nabla_x-A}$ generates a contraction semigroup and $\overline{-\zeta I- v\cdot\nabla_x+L}$ generates a strongly continuous semigroup on $L^2(\R^{2d}_{x,v})$. 

Recall the Definition \ref{closure} of the closure, and that $K$ is continuous, we can define 
\begin{align}
\A:&=\overline{-2\pi i \xi\cdot v-A},\notag\\
\B:&=\overline{-2\pi i\xi\cdot v+L} = \overline{-2\pi i \xi\cdot v-A}+K,\notag\\
B:&=\overline{-v\cdot\nabla_x+L},\label{eq33}
\end{align}with $D(\B):=D(\A)$. Then $\A$, (resp. $\B$, $B$) generates strongly continuous semigroup on $L^2(\R^d_v)$, (resp. $L^2_v$, $L^2_{x,v}$).
By Hille-Yoshida Theorem, for $\Re\lambda>-2\nu_1$, $(\lambda I-\A)^{-1}:L^2_v\to L^2_v$ is linear continuous and there exists $C>0$ such that for $\Re\lambda>C$, $(\lambda I-\B)^{-1}:L^2_v\to L^2_v$ and $(\lambda I-B)^{-1}:L^2_{x,v}\to L^2_{x,v}$ are linear continuous.

By \eqref{A_positive}, we have for $f,g\in\S$, 
\begin{align*}
\|f\|^2_{\H}+\|f\|^2_{L^2}&\lesssim \Re((I+2\pi i \xi\cdot v+A)f,f)_{L^2},\\
((-2\pi i \xi\cdot v-A)f,g)_{L^2} &= (f,(2\pi i \xi\cdot v-A)g)_{L^2},
\end{align*}where $I$ is the identity mapping.
Applying Lemma \ref{adjointofclosure} below to $I+2\pi i \xi\cdot v+A$, we have 
\begin{align}\label{Aadjoint}
\A = \widehat{A}(-y)^*,\\
\B = \widehat{B}(-y)^*,\notag
\end{align}since $I$ and $K$ are self-adjoint bounded operator on $L^2$.

\begin{Lem}\label{adjointofclosure}
Let $(A,D(A))$, $(B,D(B))$ be two densely defined linear operator on complex Hilbert space $(H,\|\cdot\|)$ such that $D(A)=D(B)$, $\overline{\Im(A)}=\overline{\Im(B)}=H$. Let $C>0$ and suppose for $f,g\in D(A)$, 
\begin{align*}
\Re(Af,f)&\ge \frac{1}{C}\|f\|^2,\\
(Af,g)&=(f,Bg).
\end{align*}
Then $\overline{A} = B^*$ and $\overline{A}^{-1} = \overline{A^{-1}}$ is continuous on $H$. Also for $f\in D(\overline{A})$, 
\begin{align*}
\Re(\overline{A}f,f)&\ge \frac{1}{C}\|f\|^2.
\end{align*}
\end{Lem}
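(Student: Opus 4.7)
The plan is to leverage the coercivity of $A$ to produce a bounded inverse and then identify $\overline{A}$ with $B^*$ via the formal adjoint relation. First, combining $\Re(Af,f) \ge \frac{1}{C}\|f\|^2$ with Cauchy--Schwarz yields $\|Af\| \ge \frac{1}{C}\|f\|$ on $D(A)$, so $A$ is injective and $A^{-1} : \Im(A) \to D(A)$ is bounded with norm $\le C$. Since $\Im(A)$ is dense in $H$, $A^{-1}$ extends uniquely to a bounded operator $T = \overline{A^{-1}} : H \to H$.

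Next I would verify that $A$ is closable. If $f_n \in D(A)$ with $f_n \to 0$ and $Af_n \to h$, then the identity $(Af_n, g) = (f_n, Bg)$, valid for every $g \in D(A) = D(B)$, passes to the limit to give $(h,g) = 0$; density of $D(B)$ forces $h = 0$. To identify $T$ with the inverse of $\overline{A}$, pick for $f \in H$ a sequence $f_n \in \Im(A)$ with $f_n \to f$, set $u_n := A^{-1} f_n \in D(A)$, and observe $u_n \to Tf$ while $Au_n = f_n \to f$; this places $Tf \in D(\overline{A})$ with $\overline{A}Tf = f$. Conversely if $g \in D(\overline{A})$ has approximants $g_n \in D(A)$ with $g_n \to g$ and $Ag_n \to \overline{A}g$, then continuity of $T$ gives $g_n = T(Ag_n) \to T\overline{A}g$, so $T\overline{A}g = g$. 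Hence $\overline{A} : D(\overline{A}) \to H$ is a bijection with bounded inverse $T$.

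For the identification $\overline{A} = B^*$, the adjoint identity $(Af,g) = (f,Bg)$ immediately gives $A \subset B^*$, and since $B^*$ is closed, $\overline{A} \subset B^*$. For the reverse inclusion, take $u \in D(B^*)$; surjectivity of $\overline{A}$ produces $v \in D(\overline{A}) \subset D(B^*)$ with $\overline{A}v = B^*u$, so $B^*(u-v) = 0$. But $\ker B^* = \Im(B)^\perp = \{0\}$ by the density hypothesis on $\Im(B)$, whence $u = v \in D(\overline{A})$. Finally, the coercivity for $\overline{A}$ is obtained by approximating $f \in D(\overline{A})$ by $f_n \in D(A)$ and passing to the limit in $\Re(Af_n, f_n) \ge \frac{1}{C}\|f_n\|^2$.

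The main obstacle is securing the reverse inclusion $B^* \subset \overline{A}$, which genuinely requires both density assumptions: density of $\Im(A)$ is what gives $\overline{A}$ its surjectivity (through the bounded extension $T$), and density of $\Im(B)$ is what makes $B^*$ injective so that $u$ and $v$ must coincide. Everything else is a routine closure-and-adjoint bookkeeping argument that should go through without friction.
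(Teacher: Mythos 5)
Your proof is correct, and it follows a route that is genuinely different from (and arguably cleaner than) the paper's. The paper works at the level of inverses: it shows both $A^{-1}$ and $B^{-1}$ are bounded and densely defined, closes them to bounded operators on all of $H$, identifies $\overline{A^{-1}}=(B^*)^{-1}$ via the duality pairing, and finally recovers $\overline{A}=B^*$ by a direct comparison of graphs, $(\overline{A^{-1}})^{-1}=\overline{A}$. You instead keep the inverse machinery only to produce the bounded two-sided inverse $T=\overline{A^{-1}}$ of $\overline{A}$, then prove $\overline{A}=B^*$ by the abstract inclusion argument: $A\subset B^*$ is immediate from $(Af,g)=(f,Bg)$; closedness of $B^*$ upgrades this to $\overline{A}\subset B^*$; and surjectivity of $\overline{A}$ combined with $\ker B^*=(\Im B)^\perp=\{0\}$ collapses the inclusion to equality. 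This avoids the graph-identity bookkeeping in the paper's last step and never needs the bound $\|f\|\le C\|Bf\|$ or the closedness of $B^{-1}$, only the density of $\Im(B)$. The separate closability check for $A$ (taking $f_n\to 0$, $Af_n\to h$ and testing against $D(B)$) and the limiting argument for the coercivity of $\overline{A}$ are both sound. One minor remark: in the reverse inclusion, you implicitly use that $B^*v=\overline{A}v$ for $v\in D(\overline{A})$, which is exactly the content of $\overline{A}\subset B^*$ established just before --- worth stating explicitly, but not a gap.
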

\begin{proof}
From the assumption, we have for $f\in D(A)$ that 
\begin{align*}
\|f\|\le C\|Af\|,\ \|f\|\le C\|Bf\|.
\end{align*}
Thus $A,B$ are injective and $A^{-1}$, $B^{-1}$ are densely defined operator and for $f=A\phi\in D(A^{-1})=\Im(A)$, $g=B\psi\in D(B^{-1})=\Im(B)$, with $\phi,\psi\in D(A)=D(B)$, we have 
\begin{align*}
\|A^{-1}f\|\le C\|f\|,\  \ &\|B^{-1}g\|_{L^2}\le C\|g\|,\notag\\
(A^{-1}f,g)=(\phi,B\psi)&=(A\phi,\psi)=(f,B^{-1}g).
\end{align*}
In particular, 
\begin{align*}
\Re(A^{-1}f,f)=\Re(\phi,B\phi)\ge \frac{1}{C}\|\phi\|^2=\frac{1}{C}\|A^{-1}f\|^2.
\end{align*}
Thus, $A^{-1},B^{-1}$ are closable and $\overline{A^{-1}},\overline{B^{-1}}$ are linear bounded operator on $H$. Indeed, for $f\in L^2$, there exists $f_n\in D(A)$ such that $f_n\to f$ in $H$ as $n\to\infty$ and hence $A^{-1}f_n$ is Cauchy in $H$. So $(f_n,A^{-1}f_n)\in G(A)$ converges and $f\in D(\overline{A^{-1}})$. Thus $\overline{A^{-1}}$ is an closed operator defined on the whole space $H$, and hence continuous on $H$. $\overline{B^{-1}}$ is similar. 
Thus, for $f,g\in H$, by density argument, 
\begin{align*}
(\overline{A^{-1}}f,g) &= (f,\overline{B^{-1}}g) = (\overline{B^{-1}}^*f,g) = ((B^*)^{-1}f,g),\\
(\overline{A^{-1}}f,f)&\ge \frac{1}{C}\|\overline{A^{-1}}f\|^2.
\end{align*}That is $\overline{A^{-1}}=(B^*)^{-1}$ is injective on $H$.
On the other hand, $(\overline{A^{-1}})^{-1} = \overline{A}$, since the graph 
\begin{align*}
G((\overline{A^{-1}})^{-1})
&= \{(g,(\overline{A^{-1}})^{-1}g):g\in D((\overline{A^{-1}})^{-1})=\Im(\overline{A^{-1}})\}\\
&= \{(\overline{A^{-1}}f,f):f\in L^2\}\\
&= \overline{\{(A^{-1}f,f):f\in D(A^{-1})=\Im(A)\}}\\
&= \overline{\{(g,Ag):g\in D(A)\}}\\
&= G(\overline{A}). 
\end{align*}
Thus $A$ is closable and $\overline{A} = (\overline{A^{-1}})^{-1} = B^*$. Also for $f\in D(\overline{A})=\Im(\overline{A}^{-1})$, there exists $g\in D(\overline{A}^{-1})=H$ such that $f=\overline{A}^{-1}g$ and hence 
\begin{align*}
\Re(\overline{A}f,f)= \Re(g,\overline{A}^{-1}g)\ge\frac{1}{C}\|\overline{A}^{-1}g\|^2=\frac{1}{C}\|f\|^2.
\end{align*}
\qe\end{proof}

By the definition of closure, $\A$ is also a dissipative operator. Then we have the following basic boundedness on $(\lambda I-\A)^{-1}$.
\begin{Lem}\label{ASchwarz}
(1). $D(\A)\subset\H$ and 
\begin{align*}
\|f\|^2_{\H} \le \frac{1}{\nu_0}\Re(-\A f,f)_{L^2_v}
\end{align*}
(2). Let $\Re\lambda\ge-\nu_1$, $f\in H(a^{-1/2})\cap L^2$, then
$(\lambda I-\A)^{-1}f\in \H\cap L^2$ and  
\begin{align}
\|(\lambda I-\A)^{-1}f\|_{\H}&\le\frac{2}{\nu_0} \|f\|_{H(a^{-1/2})},\label{esti_on_H}\\
\|(\lambda I-\A)^{-1}f\|_{L^2}&\le \frac{2}{\nu_1}\|f\|_{L^2}.\label{estiA}
\end{align}
Note that the constants are independent of $\lambda$. In particular, if $f\in\S$, then $(\lambda I-\A)^{-1}f\in\S$.
\end{Lem}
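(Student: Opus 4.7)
For part (1) I plan to extend the dissipation bound in Theorem~\ref{Thm0}, which is stated only for $f\in\S$, to the whole of $D(\A)$ by a closure argument. Given $f\in D(\A)$, the definition of the closure provides a sequence $\{f_n\}\subset\S$ such that $f_n\to f$ and $(-2\pi i y\cdot v-A)f_n\to \A f$ in $L^2_v$. Since multiplication by $2\pi i y\cdot v$ is skew-adjoint on $\S$, Theorem~\ref{Thm0} gives
\begin{align*}
\Re\bigl(-(-2\pi i y\cdot v-A)f_n,f_n\bigr)_{L^2_v}=\Re(Af_n,f_n)_{L^2_v}\ge \nu_0\|f_n\|_{\H}^2.
\end{align*}
The left-hand side converges to $\Re(-\A f,f)_{L^2_v}$, so $\{f_n\}$ is bounded in $\H$. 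Extracting a weakly convergent subsequence and using the continuous embedding $\H\hookrightarrow L^2_v$ (valid with norm $C_1$ in the hard-potential regime) to identify the weak limit with $f$, lower semicontinuity of the $\H$-norm yields $f\in\H$ together with the claimed inequality.

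For part (2), existence of $(\lambda I-\A)^{-1}$ on $L^2_v$ for $\Re\lambda\ge -\nu_1$ is immediate from Hille--Yoshida: the construction in Section~\ref{sec3} shows that $\A+2\nu_1 I$ generates a contraction semigroup, hence the right half-plane $\Re\mu>0$ lies in its resolvent set and so $\Re\lambda>-2\nu_1$ lies in the resolvent set of $\A$. Given $f\in L^2\cap H(a^{-1/2})$, set $g:=(\lambda I-\A)^{-1}f$; by part (1), $g\in D(\A)\subset\H$. Pairing $(\lambda I-\A)g=f$ with $g$ in $L^2_v$ and taking real parts,
\begin{align*}
\Re\lambda\,\|g\|_{L^2}^2+\nu_0\|g\|_{\H}^2\le \Re\lambda\,\|g\|_{L^2}^2+\Re(-\A g,g)_{L^2_v}=\Re(f,g)_{L^2_v}.
\end{align*}
The $\H$-estimate follows by bounding $|(f,g)_{L^2}|\le \|f\|_{H(a^{-1/2})}\|g\|_{\H}$ via the duality $(\H)^*=H(a^{-1/2})$ and absorbing the loss $-\nu_1\|g\|_{L^2}^2$ using $\|g\|_{L^2}\le C_1\|g\|_{\H}$; the definition $\nu_1\le \nu_0/(2C_1)$ in~\eqref{nu1} keeps $\nu_0-\nu_1 C_1^2\ge \nu_0/2$, which delivers \eqref{esti_on_H}. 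The $L^2$-estimate follows instead from Cauchy--Schwarz $|(f,g)_{L^2}|\le \|f\|_{L^2}\|g\|_{L^2}$ combined with the reverse bound $\nu_0\|g\|_{\H}^2\ge (\nu_0/C_1^2)\|g\|_{L^2}^2$; the same choice of $\nu_1$ makes $\nu_0/C_1^2-\nu_1\ge \nu_1/2$, yielding \eqref{estiA}.

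Finally, if $f\in\S$, Theorem~\ref{L2v_regularity} produces a Schwartz solution $\tilde g$ to $(\lambda I+2\pi i y\cdot v+A)\tilde g=f$ for any $\Re\lambda>-\nu_0/C_1$, a range that comfortably contains $\Re\lambda\ge -\nu_1\ge -\nu_0/(2C_1)$. Uniqueness of the $L^2_v$ solution then forces $\tilde g=g$, so $g\in\S$.

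The main obstacle I anticipate is the weak-compactness identification in part (1): one must pick the subsequence carefully and verify that the weak limit in $\H$ coincides with the $L^2_v$-limit $f$, which is precisely what legitimates transferring Theorem~\ref{Thm0} from $\S$ to $D(\A)$. Once this is in hand, part (2) is a routine energy estimate, modulo the bookkeeping of $\nu_0$, $\nu_1$, $C_1$ so that the choice of $\nu_1$ in~\eqref{nu1} exactly produces enough dissipation to dominate the bad term $-\Re\lambda\,\|g\|_{L^2}^2$.
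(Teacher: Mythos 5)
Your proposal follows the paper's proof almost step for step: a graph-closure sequence plus weak compactness in $\H$ and lower semicontinuity for part (1); the dissipative energy pairing $((\lambda I-\A)g,g)_{L^2}$ together with the duality $(\H)^*=H(a^{-1/2})$ for the $\H$-resolvent bound; and identification with the Schwartz solution supplied by Theorem~\ref{L2v_regularity} for the last assertion. There are two small deviations worth recording. First, in part (1) you identify the weak $\H$-limit with $f$ by invoking the continuous embedding $\H\hookrightarrow L^2_v$; this is fine for $\gamma+2s\ge 0$ (where $a\gtrsim 1$), which is the regime in which the lemma is actually exploited, but the paper instead tests the weak $\H$-convergence against $((a^{1/2})^w)^{-1}((a^{1/2})^w)^{-1}\psi$ for $\psi\in\S$ and uses the formal self-adjointness of $(a^{1/2})^w$, a route that needs no sign condition on $\gamma+2s$ and is therefore slightly more robust. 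Second, for the $L^2$-bound \eqref{estiA} you run a direct energy estimate with Cauchy--Schwarz and the reverse embedding $\nu_0\|g\|^2_{\H}\ge (\nu_0/C_1^2)\|g\|^2_{L^2}$, whereas the paper instead applies Hille--Yosida to the shifted generator $\tfrac{3\nu_1}{2}+\A$, which it has already shown generates a contraction semigroup for $\Re\zeta>-2\nu_1$; both routes hinge on precisely the same dissipation balance among $\nu_0$, $\nu_1$ and $C_1$, so this is a cosmetic difference. Everything else, including the Schwartz-regularity step via uniqueness of the resolvent solution, matches the paper.
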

\begin{proof}
1. By the non-positiveness \eqref{A_positive} of $A$, for $g\in\S$, we have 
\begin{align*}
\Re(-2\pi i\xi\cdot vg-Ag,g)_{L^2_v}&\le -\nu_0\|g\|^2_{\H}.
\end{align*}
Fix $f\in D(\A)$. Note that since the graph $G(\A)=\overline{G(-2\pi i\xi\cdot v-A)}$, there exists $f_n\in D(-2\pi i\xi\cdot vf-A)$ such that $f_n\to f$ and $\A f_n\to \A f$ in $L^2$. Thus $\{f_n\}_{L^2_v}$, $\{\A f_n\}_{L^2_v}$ are bounded set and hence  
\begin{align*}
\|f_n\|^2_{\H}\le\frac{\Re(-\A f_n,f_n)_{L^2_v}}{\nu_0}
\end{align*}is bounded. By Banach-Alaoglu Theorem, $f_n$ is weakly* compact in $\H$. That is, there exists a sub-sequence $\{f_{n_k}\}\subset \{f_n\}$ and $g\in\H$ such that for $\varphi\in\S$, 
\begin{align*}
(f_{n_k},\varphi)_{\H}\to (g,\varphi)_{\H} = (g,(a^{1/2})^w(a^{1/2})^w\varphi)_{L^2_v}, \text{ as } n_k\to\infty.
\end{align*}
For $\psi\in\S$, choose $\varphi = ((a^{1/2})^w)^{-1}((a^{1/2})^w)^{-1}\psi\in\S$, then 
\begin{align*}
(f_{n_k},\psi)_{L^2} \to (g,\psi)_{L^2},\text{ as }n_k\to\infty.
\end{align*}
On the other hand, $f_n\to f$ in $L^2$. Thus $f=g\in\H$ and 
\begin{align*}
\nu_0\|f\|^2_{\H} &\le \nu_0\liminf_{n_k\to\infty}\|f_{n_k}\|^2_{\H}\\&\le \liminf_{n_k\to\infty}\Re(-\A f_{n_k},f_{n_k})_{L^2_v}\\&\le\Re(-\A f,f)_{L^2_v}.
\end{align*}
Therefore, $f\in\H$ and so $D(\A)\subset\H$.
Now we let $f\in L^2\cap H(a^{-1/2})$, $\varphi:=(\lambda I-\A)^{-1}f$, then 
\begin{align*}
\nu_0\|\varphi\|^2_{\H}+\Re\lambda\|\varphi\|^2_{L^2}&\le \Re((\lambda-\A) \varphi,\varphi)_{L^2_v}\\&\le |(f,\varphi)_{L^2}|\\&\le \|f\|_{H(a^{-1/2})}\|\varphi\|_{\H},\\
\|\varphi\|_{\H}&\le\frac{2}{\nu_0} \|f\|_{H(a^{-1/2})},
\end{align*}by our choice \eqref{nu1} of $\nu_1$ and $\Re\lambda\ge-\nu_1$.

2. By the first step of Theorem \ref{L2v_regularity}, for $\Re\lambda\ge-\nu_1$, $f\in H(a^{-1/2})$, there exists unique $g\in\H\cap L^2$ such that for $\varphi\in\S$, 
\begin{align*}
(g,(\lambda-2\pi i \xi\cdot v+A)\varphi)_{L^2_v} = (f,\varphi)_{L^2_v}. 
\end{align*}
Notice $(\lambda-\A)(\lambda I-\A)^{-1}f=f$ and use \eqref{Aadjoint}, we have 
\begin{align*}
((\lambda I-\A)^{-1}f,(\lambda-2\pi i \xi\cdot v+A)\varphi)_{L^2_v} = (f,\varphi)_{L^2_v}. 
\end{align*}
For $\psi\in\S$, again by Theorem \ref{L2v_regularity}, there exists $\varphi\in\S$ such that $(\lambda-2\pi i \xi\cdot v+A)\varphi=\psi$. Combine the above two identity, we have for $\psi\in\S$, 
\begin{align*}
&((\lambda I-\A)^{-1}f-g,\psi)_{L^2_v}=0,\\
&(\lambda I-\A)^{-1}f = g\in\H\cap L^2.
\end{align*}
In particular when $f\in\S$, by Theorem \ref{L2xv_regularity}, we have $g\in\S$. 
Also, since $\frac{3\nu_1}{2}+\A$ generates a strongly continuous semigroup, by Hille-Yoshida Theorem, we have 
  \begin{align*}
    \|(\lambda I-\frac{3\nu_1}{2}-\A)^{-1}f\|_{L^2}\le \frac{1}{\Re\lambda}\|f\|_{L^2},&\text{ for } \Re\lambda>0,\\
    \|(\lambda I-\A)^{-1}f\|_{L^2}\le \frac{1}{\Re\lambda+\frac{3\nu_1}{2}}\|f\|_{L^2},&\text{ for } \Re\lambda>-\frac{3\nu_1}{2}.
  \end{align*}This proves \eqref{estiA}.
\qe\end{proof}

\subsection{Spectrum structure of $\widehat{B}(\xi)$}
We next analyze the spectrum structure of $\B$ for hard potential $\gamma+2s\ge 0$. This theorem yields the essential spectrum of $\B$ lies in $\{\lambda:\Re\lambda\le -\nu_1\}$, while the complementary set contains only eigenvalues of $\B$ which lies in $\{\lambda\in\C:-\nu_1+\delta<\Re\lambda\le0\}$. That is, $\B$ has spectral gap. Also the corresponding eigenfunctions are Schwartz function.

\begin{Thm}\label{spectrum_struture}Write $\lambda = \sigma +i\tau$. Suppose $\gamma+2s \ge 0$. 

(1). For any $\xi\in\R^d$, 
\begin{align*}
\sigma(\B)\subset \{\lambda\in\C:\Re\lambda\le 0\}.
\end{align*}

(2). There exists $\tau_1,\xi_1>0$ such that for $\xi\in\R^d$, 
\begin{align*}
\sigma(\B)\cap\{\lambda\in\C:-\nu_1<\Re\lambda\le0\}\subset\{\lambda\in\C:|\Im\lambda|\le\tau_1\},
\end{align*} and for $|\xi|\ge \xi_1$, 
\begin{align}\label{sigmaB_ylarge}
\sigma(\B)\cap\{\lambda\in\C:-\nu_1<\Re\lambda\le0\}=\emptyset.
\end{align}

(3). For any $\delta>0$, the set
\begin{align*}
\sigma(\B)\cap\{\lambda\in\C:-\nu_1+\delta<\Re\lambda\le0\}
\end{align*}consists of finitely many discrete eigenvalues of finite type without accumulation point.
If $\lambda\in\sigma(\B)\cap\{\lambda:-\nu_1<\Re\lambda\le0\}$ is an eigenvalue and $f\in D(\B)$ is the corresponding eigen-function, then $f\in\S$. Also 
\begin{equation*}
\sigma(\B)\cap\{\Re\lambda=0\}=\left\{
\begin{aligned}
\emptyset\ ,\text{ if }\xi\neq 0,\\
\{0\},\text{ if }\xi=0.
\end{aligned}\right.
\end{equation*}In particular, if $\xi=0$, then $\Ker\overline{L}=\Ker L$.

(4). For any $\xi_2$, there exists $\sigma_1\in (0,\nu_1)$ such that for $|\xi|\ge \xi_2$, 
\begin{align*}
\sigma(\B)\cap\{\lambda\in\C:-2\sigma_1\le\Re\lambda\le0\}=\emptyset. 
\end{align*}
Figure \ref{fig1} gives the localization of spectrum with respect to $|\xi|$ for statement (2), (3) and (4). 
\begin{figure}[htbp]
	\centering
	\subfigure[$|\xi|\in \R^3$]{
		\begin{minipage}[t]{0.25\linewidth}
			\centering
			\begin{figure}[H]
				\centering
				\begin{tikzpicture}[>=Stealth,scale=0.4]
					\draw[thick,->] (-6,0)--(4,0);
					\draw[thick,->] (0,-4)--(0,4);
					\draw[dashed,thick,blue] (-2.5,-3.8)--(-2.5,3.8);
					\draw (-2,-4) node[black,below]{\tiny $\Re\lambda=-\nu_1$}; 
					\draw (0,2.4) node[black,below]{\quad\tiny\ $\tau_1$};  
					\draw (0,-1.6) node[black,below]{\quad\tiny\ \ $-\tau_1$}; 
					\fill [color=blue,pattern=north east lines, pattern color = blue] (-2.5,-2) rectangle (0,2);
					\fill [color=blue,pattern=north east lines, pattern color = blue] (-6,-4) rectangle (-2.5,4);
				\end{tikzpicture}
			\end{figure} 
		\end{minipage}%
	}%
	\hspace{3mm}
	\subfigure[$|\xi|\ge \xi_1$]{
		\begin{minipage}[t]{0.25\linewidth}
			\centering
			\begin{figure}[H]
				\centering
				\begin{tikzpicture}[smooth,>=Stealth,scale=0.4]
					\draw[thick,->] (-6,0)--(4,0);
					\draw[thick,->] (0,-4)--(0,4);
					\draw[dashed,thick,blue] (-2.5,-3.8)--(-2.5,3.8);
					\draw (-2,0) node[black,below]{\tiny\ $-\nu_1$}; 
					\fill [color=blue,pattern=north east lines, pattern color = blue] (-6,-4) rectangle (-2.5,4);
				\end{tikzpicture}
			\end{figure}
		\end{minipage}%
	}%
\\
\centering
\subfigure[$\xi=0$]{
	\begin{minipage}[t]{0.25\linewidth}
		\centering
		\begin{figure}[H]
			\centering
			\begin{tikzpicture}[>=Stealth,scale=0.4]
				\draw[thick,->] (-6,0)--(4,0);
				\draw[thick,->] (0,-4)--(0,4);
				\draw[dashed,thick,blue] (-2.5,-3.8)--(-2.5,3.8);
				\draw[blue] (0,0) node[black,below right]{\tiny $0$};
				\filldraw[blue] (0,0) circle(2mm);
				\fill [color=blue,pattern=north east lines, pattern color = blue] (-6,-4) rectangle (-2.5,4);
			\end{tikzpicture}
		\end{figure} 
	\end{minipage}%
}%
\hspace{3mm}
\subfigure[$|\xi|\ge \xi_2$]{
	\begin{minipage}[t]{0.25\linewidth}
		\centering
		\begin{figure}[H]
			\centering
			\begin{tikzpicture}[smooth,>=Stealth,scale=0.4]
				\draw[thick,->] (-6,0)--(4,0);
				\draw[thick,->] (0,-4)--(0,4);
				\draw[dashed,thick,blue] (-2.5,-3.8)--(-2.5,3.8);
				\draw[blue] (-2.7,0) node[black,below right]{\tiny $-2\sigma_1$};
				\fill [color=blue,pattern=north east lines, pattern color = blue] (-6,-4) rectangle (-2.5,4);
			\end{tikzpicture}
		\end{figure}
	\end{minipage}%
}%
	\centering
	\caption{Spectrum for $\B$ in the complex plane with the parameter $\xi\in\Rd$}
	\label{fig1}
\end{figure}
\end{Thm}
\begin{proof}
1.
For $\Re\lambda> 0$, $\zeta>-\nu_1$, by Hille-Yoshida Theorem, $\lambda +\zeta-\A$ is invertible on $L^2$ and hence 
\begin{align*}
\sigma(\A)\subset \{\lambda\in\C:\Re\lambda\le -\nu_1\}.
\end{align*}
Since $K$ is compact on $L^2$, $\B=\A+K$, Corollary XVII.4.4 in \cite{Gohberg1990} gives that essential spectrum of $\B$ is contained in the essential spectrum of $\A$:
\begin{align*}
\sigma_\text{ess}({\B})\subset \sigma(\A)\subset \{\lambda\in\C:\Re\lambda\le -\nu_1\},
\end{align*}
while $\sigma(\B)\cap\{\lambda\in\C:\Re\lambda> -\nu_1\}$ consists of eigenvalues of finite type of $\B$ with possible accumulation point on $\{\Re=-\nu_1\}$.

2. Lemma \ref{Largeness} shows that there exists $\tau_1\equiv \xi_1>0$ such that for $|\Im\lambda|+|\xi|\ge\tau_1$, $\|(\lambda I-\A)^{-1}K\|_{\mathscr{L}(L^2_v)}\le 1/2$ and thus $(I-(\lambda I-\A)^{-1}K)^{-1}$ exists with
\begin{align}\label{eq25}
\|(I-(\lambda I-\A)^{-1}K)^{-1}\|_{\mathscr{L}(L^2)}\le 2.
\end{align}
For $\delta>0$, $\xi\in\R^d$, since $\rho(\A)\supset\{\lambda\in\C:\Re\lambda> -\nu_1\}$ and 
\begin{align*}
(\lambda I-\B)^{-1} = (I-(\lambda I-\A)^{-1}K)^{-1}(\lambda I-\A)^{-1},
\end{align*}
we have $\sigma(\B)\cap \{\lambda:-\nu_1+\delta\le\Re\lambda\le 0\}\subset \{\lambda:|\Im\lambda|\le \tau_1\}$, as a bounded set, consists of discrete eigenvalues without accumulation point and hence the number of such eigenvalues is finite. 
On the other hand, for $|\xi|\ge \xi_1$, $\Re\lambda\ge -\nu_1+\delta$, we have that $I-(\lambda I-\A)^{-1}K$ and $\lambda I-\A$ are always invertible and hence $\sigma(\B)\cap \{\lambda:-\nu_1+\delta\le\Re\lambda\le 0\}=\emptyset$.

3. 
Note that for $f\in \S$, we have $\Re(f,(-2\pi i\xi\cdot v+L)f)_{L^2}\le0$. Thus by definition of closure, 
\begin{align*}
\{\Re(f&,\B f)_{L^2}:f\in D(\B)\}\\ &\subset \overline{\{\Re(f,(-2\pi i\xi\cdot v+L)f)_{L^2}:f\in \S\}}\subset (-\infty,0].
\end{align*}
Let $\lambda\in \sigma(\B)$ satisfies $\Re\lambda>-\nu_1$, then $\lambda$ is an eigenvalue to $\B$. Suppose $f\in D(\B)$ is the corresponding eigen-function, then 
\begin{align*}
\B f &= \lambda f,\\
(\B f,f)_{L^2}&=\lambda \|f\|^2_{L^2}.
\end{align*}
Taking the real part, we have $\Re\lambda\le 0$.

For this eigen-function $f\in D(\B)\subset\H\cap L^2$, we have 
\begin{align*}
\A f =  Kf-\lambda f.
\end{align*}
By \eqref{Aadjoint}, we have for $\varphi\in \S$ that 
\begin{align*}
(\lambda f-\A f,\varphi)_{L^2} = (f,(\overline{\lambda}-2\pi i\xi\cdot v+A)\varphi)_{L^2} = (Kf,\varphi)_{L^2}
\end{align*}
Thus $f$ is a weak solution of $(\lambda+2\pi i\xi\cdot v+A)f=Kf$, i.e. \eqref{eq133}. 
Notice that from Theorem \ref{Thm0}, $K\in S(\<v\>^{-n}\<\eta\>^{-n})$ is a good pseudo-differential operator.
Apply the estimate \eqref{varphi_v_beta} to $f$, for $k,l\ge 0$, we have 
\begin{align*}
\|(a^{1/2})^w\<v\>^k\<D_v\>^lf\|_{L^2_v}
&\lesssim\|(a^{-1/2})^w\<v\>^k\<D_v\>^lKf\|_{L^2_v}\\
&\lesssim \|f\|_{L^2_v}.
\end{align*}whenever the right hand side is well-defined.  
Thus $f\in H(\<v\>^k\<\eta\>^l)$ for any $k,l\in\R$ and hence belongs to $\S$ by Sobolev embedding theorem. 
Now $f\in\S$ is smooth enough that the closure in $\A$ can be canceled and the eigen-equation becomes
\begin{align*}
(-2\pi i \xi\cdot v+L)f &= \lambda f,\\
(Lf,f)_{L^2} &= \Re\lambda\|f\|_{L^2}.
\end{align*}
If $\Re\lambda = 0$, then $Lf=0$ and $-2\pi i \xi\cdot vf=i\Im\lambda f$, which implies $y=\lambda=0$.
Also $\overline{L}f=0$ yields $f\in \S$, hence $\Ker\overline{L}\in \Ker L$.

4. It suffices to prove (4) when $\xi_2\in(0,\xi_1)$. We claim that there exists $0<\sigma_1<\nu_1$ such that for $|\xi|\in[\xi_2,\xi_1]$,  $\sigma(\B)\cap\{\lambda:-2\sigma_1\le\Re\lambda\le 0\}=\emptyset$. 
We prove this by contradiction. Suppose this fails, then for $n\in\N$, there exists eigenvalues 
\begin{align*}
\lambda_n\in\sigma(\B),\ \ \xi_n\in\R^d,
\end{align*}  with $-\frac{1}{n}\le\Re\lambda\le 0$, $|\xi_n|\in[\xi_2,\xi_1]$. Since $\sigma(\B)\subset\{\lambda:|\Im\lambda|\le\tau_1\}$, we find that $\{\lambda_n\}$ is a bounded sequence.
Let $f_n\in D(\B)$ be the corresponding eigen-function to $\lambda_n$ such that $\|f_n\|_{L^2}=1$. 
Then by statement (3), we have $f_n\in\S$ and hence 
\begin{align}
&(2\pi i\xi_n\cdot v+L)f_n=\lambda_n f_n,\label{eq48}\\
&\Re\lambda_n = (Lf_n,f_n)_{L^2} \le -\nu_0\|P_1f_n\|_{L^2}^2,\notag
\end{align}by \eqref{eq17} for some $\nu_0>0$. Thus $\lim_{n \to\infty}\|P_1f_n\|_{L^2}= 0$ and $\lim_{n \to\infty}\|P_0f_n\|_{L^2}=1$. Since \Ker$L$ is a finite dimensional space, $P_0f_n$ converges in $L^2$. Thus up to a sub-sequence, we obtain 
\begin{align*}
\xi_n\to \xi_0,\ \lambda \to i\lambda_0,\ f_n\to f_0,\text{ as }n\to\infty,
\end{align*}for some $\xi_0\in[\xi_2,\xi_1]$, $\lambda_0\in\R$, $f_0\in\Ker L$. 
Also for $\varphi\in\S$,  $(Lf_n-Lf_0,\varphi)_{L^2}=(f_n-f_0,L\varphi)_{L^2}\to 0$. Up to a sub-sequence, we have $\lim_{n \to\infty}Lf_n=Lf_0$ almost everywhere. Thus taking limit in \eqref{eq48},
\begin{align*}
(2\pi i\xi_0\cdot v+L)f_0=2\pi i\xi_0\cdot vf_0 =i\lambda_0 f_0.
\end{align*}Hence $\lambda_0=\xi_0=0$, which contradicts to $\xi_0\in[\xi_2,\xi_1]$.
Therefore, there exists $\sigma_1>0$ such that for $|\xi|\in[\xi_2,\xi_1]$, 
\begin{align*}
\sigma(\B)\cap\{\lambda:-2\sigma_1\le\Re\lambda\le 0 \}=\emptyset. 
\end{align*}
Together with \eqref{sigmaB_ylarge}, we prove (4).
\qe\end{proof}

The existence of eigenvalues and eigenfunctions to $\B$ with expansions and derivatives have been well studied in different contexts, cf. \cite{Yang2016,Liu2011}.
But since our Definition \eqref{L1} \eqref{L2} on $L$ and $\A$, $\B$ are different from the above works, we provide a slightly different proof to make our argument self-contained. 
\begin{Thm}\label{eigenstruture}Assume $\gamma+2s\ge 0$ and write $\lambda = \sigma+i\tau$. 

  (1). There exists $\xi_0>0$, $\sigma_0\in(0,\nu_1/2)$ and $\lambda_j(|\xi|)\in C^\infty([0,\xi_0])$ such that for $|\xi|\le \xi_0$,
  \begin{align*}
    \sigma(\B)&\cap \{\lambda:\Re\lambda\ge -\nu_1\}=\{\lambda_j(|\xi|)\}_{j=0}^{d+1},\\
\rho(\B)&\supset \{\lambda:-2\sigma_0\le\Re\lambda\le -\frac{\sigma_0}{2}\}
\cup\{\lambda:\Re\lambda\ge-2\sigma_0,|\lambda|\ge \frac{\sigma_0}{2}\}.
  \end{align*}
  The eigenvalues $\lambda_j(\xi)$ and corresponding eigenfunctions $\varphi_j(\xi)$ have the asymptotic expansions:
  \begin{align*}
    &\lambda_j(\xi) = -2\pi i\eta_{0,j}|\xi| + \eta_{1,j}|\xi|^2 + O(|\xi|^3),\ (|\xi|\to0),\\
	&\varphi_j(y,v)= \varphi_{0,j} + |\xi|\varphi_{1,j}(\xi),\notag
  \end{align*}
  with $\eta_{0,j}\in\R$ selected from Lemma \ref{eigenP0} and $\eta_{1,j}<0$, $\varphi_{0,j},\varphi_{1,j}(\xi)\in\S$. The structure of $\rho(\B)$ is given in Figure \ref{fig2}. 
  
  \begin{figure}[htbp]
  	\centering
  		\begin{minipage}[t]{0.25\linewidth}
  			\centering
  			\begin{figure}[H]
  				\centering
  				\begin{tikzpicture}[smooth,>=Stealth,scale=0.46]
  					\draw[thick,->] (-6,0)--(4,0);
  					\draw[thick,->] (0,-4)--(0,4);
  					\draw[blue] (-2.7,0) node[black,below right]{\tiny $\frac{\sigma_0}{2}$};
  					\draw[blue] (-4,0) node[black,below left]{\tiny $-2\sigma_0$};
  					\draw[blue] (0,2) node[black,below left]{\tiny $\frac{\sigma_0}{2}$};
  					\draw[blue] (0,-2) node[black,above left]{\tiny $-\frac{\sigma_0}{2}$};
  					\fill [color=blue,pattern=north east lines, pattern color = blue] (-4,-4) rectangle (-2.5,4);
  					\fill [color=blue,pattern=north east lines, pattern color = blue] (-2.5,-4) rectangle (0,-2);
  					\fill [color=blue,pattern=north east lines, pattern color = blue] (-2.5,2) rectangle (0,4);
  					\fill [color=blue,pattern=north east lines, pattern color = blue] (0,-4) rectangle (4,4);
  				\end{tikzpicture}
  			\end{figure}
  		\end{minipage}%
  	\centering
  	\caption{Resolvent set for $\B$ in the complex plane with the parameter $\xi\in\Rd$}
  	\label{fig2}
  \end{figure}
  
(2). Denote the eigen-projection to the eigenvalue $\lambda_j(\xi)$ by $P_j(\xi)$. Then $P_j$ is of finite rank and there exists $C>0$ such that for $|\xi|\le \xi_0$, 
\begin{align*}
\|P_j(\xi)f\|_{\H}\le C\|f\|_{H(a^{-1/2})}.
\end{align*}

\end{Thm}
\begin{proof}
Let $\varphi\in D(\B)$ be the eigenfunction corresponding to eigenvalue $\lambda\in \sigma(\B)$ with $\Re\lambda\in(-\nu_1,0]$, then $\varphi\in\S$ by Theorem \ref{spectrum_struture} (3). 
Hence the eigenvalue problem $\B \varphi=\lambda\varphi$ becomes 
\begin{align*}
(-2\pi i\xi\cdot v+L)\varphi = \lambda\varphi,
\end{align*}where $\lambda=\lambda(\xi)$ is the eigenvalue and $\varphi=\varphi(\xi)$ is the eigenfunction. 

1. We claim that $\lambda(\xi)$ depends only $|\xi|$. 
Indeed, let $R$ be an orthogonal matrix acting on $v$, i.e. $R$ is a rotation. 
We denote $R^*\varphi$ to be the pull back of $\varphi$ by $R$ as 
\begin{align*}
	R^*\varphi(v) = \varphi(Rv). 
\end{align*}
Then by Lemma \ref{basicL},
\begin{align*}
\lambda(\xi)R^*\varphi = -2\pi i\xi\cdot Rv\,\varphi(Rv) + R^*L\varphi = -2\pi iR^T \xi\cdot vR^*\varphi + LR^*\varphi.
\end{align*}
Here $R^T$ is the transpose of $R$. 
Thus $\lambda(\xi)$ is an eigenvalue of both $\widehat{B}(R^T\xi)$ and $\B$. So $\lambda$ depends only on $|\xi|$ and we will write $\lambda(\xi)=\lambda(|\xi|)$ for convenience.
Now we pick a orthogonal matrix $R$ such that $R^T\xi=(|\xi|,0,\cdots,0)\in\Rd$, then $R(\B\varphi)=R(\lambda\varphi)$ becomes 
\begin{align*}
(2\pi i|\xi|v_1+L)R^*\varphi(\xi) = \lambda(|\xi|)R^*\varphi(\xi),
\end{align*}and hence $R^*\varphi(\xi)$ depends only of $|\xi|$. $R\varphi(\xi)$ is the eigenfunction of $2\pi i|\xi|v_1+L$. 

2. Denote $r=|\xi|$. Notice that we assume $r\in\R$ in the following, though $|\xi|\ge 0$. For $-\nu_1<\Re\lambda\le 0$, next we will solve eigen-equation:
\begin{align}\label{46}
(2\pi irv_1+L)\varphi = \lambda(r)\varphi.
\end{align}
Applying the Macro-micro decomposition, i.e. projection $P_0$ and $P_1$ on \eqref{46}, we obtain 
\begin{equation}\label{eq38}
\left\{
\begin{aligned}
P_0(-2\pi irv_1(P_0\varphi+P_1\varphi)) = \lambda P_0\varphi,\\
P_1(-2\pi irv_1\varphi) + LP_1\varphi = \lambda P_1\varphi.
\end{aligned}\right.
\end{equation}
Note that $LP_0=P_0L=0$ implies $LP_1=P_1L$. 
The second equation yields 
\begin{align}\label{eq39}
(-\lambda - P_1(2\pi irv_1) + L)P_1\varphi = P_1(2\pi irv_1P_0\varphi).
\end{align}

3. For $\Re\lambda>-\nu_1$, denote
\begin{align*}
S:={-\lambda P_1 - P_1(2\pi irv_1)P_1 + L},
\end{align*}with domain $D(S):=\S\cap (\Ker L)^{\perp}$. Regard $(\Ker L)^\perp$ equipped with $L^2$ norm as the whole space in this step, then $D(S)$ is densely defined and hence closable.
We claim that $\big(\overline{S}\big)^{-1}$ exists on $(\Ker L)^\perp$ with the closure taken in $(\Ker L)^\perp$. 
Indeed, recall \eqref{Aadjoint} that 
\begin{align*}
\overline{2\pi irv_1-A}^{L^2} = (-2\pi irv_1-A)^{*{L^2}},
\end{align*} where the closure and adjoint are taken on $L^2$ with domain $\S$, denoted by $\overline{(\cdot)}^{L^2}$ and $(\cdot)^{*L^2}$ respectively while the usual notation $\overline{(\cdot)}$, $(\cdot)^*$ are taken on $(\Ker L)^\perp$. 
By Lemma \ref{semigroup}, it suffices to show that $S$ and its adjoint are dissipative on $(\Ker L)^\perp$. However, it's hard to prove $S$ has a dense range on $(\Ker L)^\perp$ and hence we can't use the former technique.
For $f\in \S$, $\Re\lambda>-\nu_1\ge-\nu_0$, by \eqref{eq17}, 
\begin{align}\label{eq40}
\Re(Sf,f)_{L^2}\le (-\nu_1-\Re\lambda)\|P_1f\|^2_{L^2} \le 0.
\end{align}
Hence the closure $\overline{S}$ of $S$ on $(\Ker L)^\perp$ and closure $\overline{S}^{L^2}$ on $L^2$ satisfies
\begin{equation}\label{eq43}\begin{split}
\Re(\overline{S}f,f)_{L^2}&\le (-\nu_1-\Re\lambda)\|P_1f\|^2_{L^2}\le 0,\ \text{for $f\in D(\overline{S})$,}\\
\Re(\overline{S}^{{L^2}}f,f)_{L^2}&\le 0,\  \text{for $f\in D(\overline{S}^{L^2})$}.\end{split}
\end{equation}
Notice that
\begin{align}\label{eq44}
S = -2\pi irv_1-A - \underbrace{\lambda P_1 + (2\pi irv_1)P_0 + P_0(2\pi irv_1)P_1 + K}_{\text{bounded part}},
\end{align}
we can find the domain of $S^*$, the adjoint taken on $(\Ker L)^\perp$:
\begin{align*}
D(S^*) &= \{g\in(\Ker L)^\perp:f\mapsto (Sf,g)_{L^2} \text{ is continuous on } \S\cap (\Ker L)^\perp\}\\
&\subset \{g\in L^2:f\mapsto (Sf,g)_{L^2} \text{ is continuous on } \S\cap L^2\}\\
&= D((-2\pi irv_1-A)^{*L^2}).
\end{align*}
Then by Definition \ref{Defadjoint}, for $f\in D(S^*)$, 
\begin{align*}
\Re(S^*f,f)_{L^2} &= \Re((-2\pi irv_1-A)^{*L^2}f,f)_{L^2}\\
&\qquad + \Re((-\lambda P_1 + (2\pi irv_1)P_0 + P_0(2\pi irv_1)P_1 + K)^{*L^2}f,f)_{L^2}\\
&= \Re(f,\overline{(2\pi irv_1-A)}^{L^2}f)_{L^2}\\
&\qquad + \Re(f,(-\bar{\lambda} P_1 + P_0(-2\pi irv_1) + P_1(-2\pi irv_1)P_0 + K)f)_{L^2}\\
&= \Re(f,\overline{(-\bar{\lambda} P_1+P_1(2\pi irv_1)P_1+L)}^{L^2}f)_{L^2}\\
&\le 0,
\end{align*}where the last inequality is similar to \eqref{eq43}.
Thus $S$ and its adjoint are dissipative on Banach space $(\Ker L)^\perp$. By Theorem \ref{semigroup}, $\overline{S}$ generates a contraction semigroup on $(\Ker L)^\perp$. 
Hence, by \ref{semigroup_Hille}, $(\overline{-(\lambda+\eta) P_1 - P_1(2\pi irv_1)P_1 + L})^{-1}$ exists on $(\Ker L)^\perp$, for $\Re\eta>0$, $\Re\lambda>-\nu_1$. Thus $\overline{S}$ is invertible on $(\Ker L)^\perp$.
Also by \eqref{eq40}, for $f\in(\Ker L)^\perp$,
\begin{align*}
\|\overline{S}^{-1}f\|_{L^2}\le \frac{1}{\nu_1+\Re\lambda}\|P_1f\|_{L^2}.
\end{align*}
One can easily apply the spectrum theory to get that $\overline{S}^{-1}$ is smooth with respect to $\lambda$. In the following, we will regard $\overline{S}$ and $(\overline{S})^{-1}$ as operators on $(\Ker L)^\perp$.

4. In this step, we claim that for $f,g\in\S$, $((\overline{S_r})^{-1}f,g)_{L^2}$ is smooth with respect to $r$, where we write $S_r$ to show the dependence on $r$. Indeed, for $g\in \S\cap (\Ker L)^\perp$, let $f:=(\overline{S})^{-1}g\in (\Ker L)^\perp$, then by \eqref{eq44},
\begin{align*}
\overline{\lambda +2\pi irv_1+A}f &= Kf+ \underbrace{\lambda P_0f + (2\pi irv_1)P_0f + P_0(2\pi irv_1)P_1f  - g}_{\in\S}.
\end{align*}
Applying the argument of step 4 in Theorem \ref{spectrum_struture}, we have $f\in\S$ and for $k,l\ge 0$, 
\begin{align}\label{eq44P0a}
\|\<v\>^k\<D_v\>^lf\|_{L^2} \lesssim \|f\|_{L^2}+\|\<v\>^k\<D_v\>^lg\|_{L^2}.
\end{align}
Thus $(\overline{S})^{-1}$ maps $\S$ into $\S$. Also $P_1=I-P_0$ maps $\S$ into $\S$. 
For $r_1,r_2\in\R$, $f,g\in\S$, 
\begin{align*}
\big((\overline{S}_{r_1})^{-1}f-(\overline{S}_{r_2})^{-1}f,g\big)_{L^2}
&=2\pi i\big((\overline{S}_{r_1})^{-1}(P_1(r_1-r_2)v_1P_1)(\overline{S}_{r_2})^{-1}f,g\big)_{L^2},\\
\big|\big((\overline{S}_{r_1})^{-1}f-(\overline{S}_{r_2})^{-1}f,g\big)_{L^2}\big|
&\le\frac{2\pi|r_1-r_2|}{\nu_1+\Re\lambda}\|(P_1(v_1)P_1)(\overline{S}_{r_2})^{-1}f\|_{L^2}\|g\|_{L^2}\\
&\to 0, \ \text{ as }{r_1\to r_2}.
\end{align*}
Hence $\partial_{r}((\overline{S_r})^{-1}f,g)_{L^2}=\big((\overline{S}_{r})^{-1}(P_12\pi iv_1)(\overline{S}_{r})^{-1}f,g\big)_{L^2}$. Noticing that 
\begin{align*}
&\quad\ (\overline{S}_{r_1})^{-1}(P_1v_1)(\overline{S}_{r_1})^{-1}-(\overline{S}_{r_2})^{-1}(P_1v_1)(\overline{S}_{r_2})^{-1} \\
&= (\overline{S}_{r_1})^{-1}(P_1v_1)(\overline{S}_{r_1})^{-1}-(\overline{S}_{r_2})^{-1}(P_1v_1)(\overline{S}_{r_1})^{-1}\\
&\quad\ + (\overline{S}_{r_2})^{-1}(P_1v_1)(\overline{S}_{r_1})^{-1}-(\overline{S}_{r_2})^{-1}(P_1v_1)(\overline{S}_{r_2})^{-1},
\end{align*}we can apply induction to find $((\overline{S_r})^{-1}f,g)_{L^2}$ is smooth with respect to $r$ and 
\begin{align*}
\partial^n_{r}((\overline{S_r})^{-1}f,g)_{L^2}=\big(\big((\overline{S}_{r})^{-1}(P_12\pi iv_1)\big)^n(\overline{S}_{r})^{-1}f,g\big)_{L^2}.
\end{align*}

5.
Now we can return to \eqref{eq39}. Notice that $2\pi i rv_1P_0\varphi\in\S$ and $P_1=I-P_0$ maps $\S$ into $\S$, we have 
\begin{align}\label{eq45}
P_1\varphi = \overline{S}^{-1}P_1(2\pi i rv_1P_0\varphi)\in\S.
\end{align}
Substitute this into \eqref{eq38}, 
\begin{align*}
P_0\big(-2\pi irv_1\big(P_0\varphi+\overline{S}^{-1}P_1(2\pi i rv_1P_0\varphi)\big)\big) - \lambda P_0\varphi = 0.
\end{align*}
Denote $T:= P_0(-2\pi iv_1\overline{S}^{-1}P_1(2\pi iv_1P_0))$, then $(Tf,g)_{L^2}$ is smooth respect to $r$, and 
\begin{align}\label{eq37}
rP_0(-2\pi iv_1P_0\varphi)+r^2TP_0\varphi- \lambda P_0\varphi = 0.
\end{align}
By Lemma \ref{eigenP0} in appendix, the operator $P_0(-2\pi iv_1P_0)$ has eigenvalues $\eta_{0,j}$ and eigenfunctions $\psi_{0,j}$. We expand
\begin{align}\label{eqP0}
P_0\varphi = \sum^{d+1}_{j=0}C_j\psi_{0,j}.
\end{align}
Then 
\begin{align*}
-2\pi ir\sum^{d+1}_{j=0}C_j\eta_{0,j}\psi_{0,j}+r^2\sum^{d+1}_{j=0}C_jT\psi_{0,j}-\sum^{d+1}_{j=0}C_j\lambda\psi_{0,j}=0.
\end{align*}Taking inner product with $\{\psi_{0,k}\}_{k=0}^{d+1}$, we obtain
\begin{align}
-2\pi ir
\begin{pmatrix}
C_0\eta_{0,0}\\
\vdots\\
C_{d+1}\eta_{0,d+1}
\end{pmatrix}+r^2
\begin{pmatrix}
\sum^{d+1}_{j=0}C_j(T\psi_{0,j},\varphi_{0,0})_{L^2}\\
\vdots\\
\sum^{d+1}_{j=0}C_j(T\psi_{0,j},\varphi_{0,d+1})_{L^2}
\end{pmatrix}-\begin{pmatrix}
C_0\lambda\\
\vdots\\
C_{d+1}\lambda
\end{pmatrix}=0,\notag\\
\left[
\lambda I_{d+2}+2\pi ir
\begin{pmatrix}
\eta_{0,0}& &\\
&\ddots&\\
&&\eta_{0,d+1}
\end{pmatrix}-r^2
\begin{pmatrix}
(T\psi_{0,j},\psi_{0,k})_{L^2}
\end{pmatrix}^{d+1}_{k,j=0}\right]
\begin{pmatrix}
C_0\\
\vdots\\
C_{d+1}
\end{pmatrix}=0,\label{eigen_eq}
\end{align}where $\big((T\psi_{0,j},\psi_{0,k})_{L^2}\big)^{d+1}_{k,j=0}$ is the matrix with $T_{jk}:=(T\psi_{0,j},\psi_{0,k})_{L^2}$ being the element of its $k^{th}$ row, $j^{th}$ column. 
Notice that $(T\psi_{0,j},\psi_{0,k})_{L^2}=4\pi(\overline{S}^{-1}P_1(v_1\psi_{0,j}),P_1(v_1\psi_{0,k}))_{L^2}$ and $\psi_{0,j}=v_j\mu^{1/2}$ $(j=2,\dots,d)$. The reflection 
\begin{align*}
v\mapsto (v_1,\dots,-v_j,\dots,v_d)\, (j=2,\dots,d)
\end{align*} and rotation 
\begin{align*}
R_{ij}:=(\dots,{v_i},\dots,{v_j},\dots)\mapsto (\dots,{v_j},\dots,{v_i},\dots)\,(i,j=2,\dots,d)
\end{align*} 
commutes with $\overline{S}^{-1}, v_1, P_0, P_1$
Thus by reflection, for $2\le j\le d$, $0\le k\le d+1$, $k\neq j$, we have $v_1\overline{S}^{-1}P_1v_1\psi_{0,j}$ is odd about $v_j$ while $\psi_{0,k}$ is even about $v_j$ and hence, $T_{jk}=0$. 
By rotation, $T_{22}=T_{33}=\cdots=T_{dd}=4\pi(\overline{S}^{-1}P_1(v_1\psi_{0,j}),P_1(v_1\psi_{0,k}))_{L^2}<0$ and are independent of $r$. 

6. In this step, we will solve the eigenvalues $\lambda$ and eigenvector $(C_0,\dots,C_{d+1})$ of \eqref{eigen_eq} by implicit function theorem.
The eigen-equation \eqref{eigen_eq} has a non-trivial solution $(C_0,\dots,C_{d+1})$ if and only if the corresponding matrix is of $0$ determinant. That is 
\begin{align}
0 &= \left|
\lambda I_{d+2}+2\pi ir
\begin{pmatrix}
\eta_{0,0}& &\\
&\ddots&\\
&&\eta_{0,d+1}
\end{pmatrix}-r^2
\begin{pmatrix}
(T\psi_{0,j},\psi_{0,k})_{L^2}
\end{pmatrix}^{d+1}_{k,j=0}\right|\notag\\
&= \left|\lambda I_3+2\pi ir\begin{pmatrix}
\eta_{0,0}&&\\&\eta_{0,1}&\\&&\eta_{0,d+1}
\end{pmatrix}-r^2(T_{jk})_{k,j=0,1,d+1}\right|\prod^d_{j=2}(\lambda-r^2T_{jj}).\label{eq46}
\end{align}
Note that $T_{jk}=0$ for $2\le j\le d$ and $k\neq j$.  
We can easily get $d-1$ solutions: 
\begin{align}\label{choicelambda1}
\lambda_j(r) = r^2T_{jj},
\end{align}with $T_{jj}<0$. Choose the corresponding eigenvectors of \eqref{eigen_eq} to be $e_2,\dots,e_d$, the standard unit vector in $\R^{d+2}$. Then by \eqref{eqP0}, we can choose 
\begin{align*}
P_0\varphi_j = \psi_{0,j} = \psi_j,\ \ (j=2,\dots,d).
\end{align*}

In order to find the other $3$ solutions, we write $\lambda=(2\pi ir)\eta$, $\zeta:=(C_0,C_1,C_{d+1})$ and 
\begin{align*}
f(r,\eta):=\eta I_3+\begin{pmatrix}
\eta_{0,0}&&\\&\eta_{0,1}&\\&&\eta_{0,d+1}
\end{pmatrix}-\frac{r}{2\pi i}(T_{jk})_{k,j=0,1,d+1}
\end{align*}
Suppose $r\in[-1,1]$, $|\eta|\le\frac{\nu_1}{4\pi}$, then $\Re\lambda>-\nu_1$. 
Since the above eigenvectors are unit vector $e_j$ $(j=2,\dots,d)$, we obtain that the eigen-equation \eqref{eq46} is equivalent to 
\begin{align*}
f(r,\eta)\zeta^T=0,
\end{align*}
since $r=0$ means $\lambda=0$ from \eqref{eigen_eq}.
To apply the implicit function Theorem \ref{implicit_function}, we consider $\eta=\eta^R+i\eta^I$ and 
\begin{align*}
g(r,\eta^R,\eta^I):=\big(\Re(\det f(r,\eta)),\Im(\det f(r,\eta))\big).
\end{align*}
Then $g:(-1,1)\times(-\frac{\nu_1}{4\pi},\frac{\nu_1}{4\pi})^2\to \R^2$ is smooth and 
\begin{align*}
g|_{r=0,\eta=-\eta_{0,j}}&=0, \\
\det(\partial_{\eta^R,\eta^I}g)|_{r=0,\eta=-\eta_{0,j}} &= \prod_{k=0,1,d+1,k\neq j}(\eta_{0,k}-\eta_{0,j})^2\neq 0,
\end{align*}for $j=0,1,d+1$.
Hence, implicit function theorem yields that there exists $\xi_0>0$ and $\eta_j(r)\in C^\infty([-\xi_0,\xi_0];\C)$ such that for $j=0,1,d+1$, 
\begin{equation*}
\begin{split}
\eta_j(0)=-\eta_{0,j},\ \det f(r,\eta_j(r))=0,\text{ for }r\in[-\xi_0,\xi_0].
\end{split}
\end{equation*} Thus,  
\begin{align}\label{choicelambda2}
\lambda_j(r)=2\pi ir\eta_j(r)\in C^\infty([-\xi_0,\xi_0];\C),\ j=0,1,d+1,
\end{align}is the other 3 eigenvalues.

In order to find the corresponding eigenvectors, it's equivalent to solve 
\begin{align*}
f(r,\eta_j(r))\begin{pmatrix}
C_0\\C_1\\C_{d+1}
\end{pmatrix}=0,
\end{align*}where the characteristic polynomial $\det f(r,\eta)$ has three distinct solution $\eta_j(r)$ when $r$ is sufficiently small. Hence each eigenvalue $\eta_j$ corresponds to distinct eigenvector $\zeta:=(C_0,C_1,C_{d+1})$. 
Denote $f^{(ii)}(r,\eta)$ to be the matrix $f(r,\eta)$ with the $i$-th row and $i$-th column removed, $f^{(i)}(r,\eta)$ to be the $i$th column of $f(r,\eta)$ with the $i$-th component removed, $\zeta^i$ to be $\zeta$ with the $i$-th component removed. Then $f^{(jj)}(0,\eta_j)$ is invertible by our choice of $\eta_j(r)$ from \eqref{eigen_matrix_eq} and hence $f^{(jj)}(r,\eta_j(r))$ is invertible whenever $r$ is sufficiently small. So the $j$th row in $f(r,\eta_j(r))$ can be eliminated by Gaussian elimination and we can choose the $j$-th component of $\zeta_j(r)$ to be $1$ and determine the rest of $\zeta_j(r)$ by  $\zeta^j(r):=-(f^{(jj)}(r,\eta_j(r)))^{-1}f^{(j)}(r,\eta_j(r))$.
Then $\zeta^j(0)=0$ and $\zeta_j(r)$ is smooth with respect to $r\in[-\xi_0,\xi_0]$. We then select 
\begin{align}\label{eq44P0}
P_0\varphi_j(r) = \zeta_j(r)\cdot(\psi_{0,0},\psi_{0,1},\psi_{0,d+1})=:\psi_{0,j}+rC_{0,j}(r)\varphi_{0,j},
\end{align}for $j=0,1,d+1$. Then $P_0\varphi_j(r)\in\S$ is a linear combination of $\psi_{0,0},\psi_{0,1},\psi_{0,d+1}$ and $C_{1,j}(r)\in C^\infty([-\xi_0,\xi_0])$. 

Write $\lambda_j(r)=-2\pi ir\eta_{0,j}+r^2\eta_{1,j}+O(r^3)$ $(j=0,1,d+1)$ and substitute eigenvalue and eigenfuntion $\lambda_j(r)$, $P_0\varphi_j(r)$ into \eqref{eq37}, we obtain  
\begin{align*}
P_0(-2\pi iv_1P_0\varphi_j)+rTP_0\varphi_j-(-2\pi i\eta_{0,j}+r\eta_{1,j}+O(r^2)) P_0\varphi_j = 0,
\end{align*}as $r\to 0$ and hence considering order $O(r)$, 
\begin{align*}
\eta_{1,j} = \frac{(TP_0\varphi_j,P_0\varphi_j)_{L^2}}{\|P_0\varphi_j\|^2_{L^2}}<0.
\end{align*}

7. 
At last we can select $P_1\varphi_j(r)$ by \eqref{eq45} and statement (2) follows from \eqref{eq44P0a}\eqref{eq45}\eqref{eq44P0}. Then by step one, the rotation $(R^{-1})^*\varphi_j$ gives the eigenfunctions of $\B$.

Recall the choice \eqref{choicelambda1}\eqref{choicelambda2} of $\lambda_j$.
If $|\xi|\le\xi_0$ and $\lambda$ is any eigenvalues of $\B$ with $\Re\lambda\ge -2\sigma_0$, then $\lambda$ is the root of \eqref{eq46} and must be $\lambda_j(r)$ for some $j=0,\dots,d+1$. 
Fix $0<\sigma_0<\nu_1/2$ and choose $\xi_0>0$ sufficiently small such that for $|r|\le\xi_0$, $j=0,\dots,d+1$, 
\begin{align*}
|\lambda_j(r)|<\frac{\sigma_0}{2}.
\end{align*}
Then for $|\xi|\le\xi_0$, 
\begin{align*}
\sigma(\B)&\subset \{\lambda:-\frac{\sigma_0}{2}<\Re\lambda\le 0,\, |\Im\lambda|<\frac{\sigma_0}{2}\},\\
\rho(\B)&\supset \{\lambda:-2\sigma_0\le\Re\lambda\le -\frac{\sigma_0}{2}\}
\cup\{\lambda:\Re\lambda\ge-2\sigma_0,|\lambda|\ge \frac{\sigma_0}{2}\}.
\end{align*}
\qe\end{proof}

The following lemma provides the decay of $\|(\lambda I-\A)^{-1}K\|_{\mathscr{L}(L^2)}$ when $|\xi|$ or $|\tau|$ is large, which will gives the invertibility of $\lambda I-\B$ when $|\xi|$ or $|\tau|$ is large. 
\begin{Lem}\label{Largeness}
Write $\lambda = \sigma+i\tau$. Let $|\tau|\ge 1$, $\Re\lambda\ge -\nu_1$. Then
\begin{align*}
\|(\lambda I-\A)^{-1}K\|_{\mathscr{L}(L^2)}\to 0,\text{   as }|\xi|+|\tau|\to \infty.
\end{align*}
\end{Lem}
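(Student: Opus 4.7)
The plan is a compactness-plus-decay argument that reduces the operator-norm statement to a pointwise decay statement on a fixed Schwartz test function. Since $K$ is compact on $L^2_v$ by Theorem \ref{Thm0}, the image $K(\{f:\|f\|_{L^2_v}\le 1\})$ is totally bounded in $L^2_v$; moreover $K$ sends $L^2_v$ into $\S(\Rd)$, so for any $\varepsilon>0$ one may choose a finite $\varepsilon$-net $\{\psi_j\}_{j=1}^N\subset\S$ of this image. Combined with the uniform resolvent bound $\|(\lambda I-\A)^{-1}\|_{\mathscr{L}(L^2_v)}\le 2/\nu_1$ from Lemma \ref{ASchwarz} (valid for $\Re\lambda\ge-\nu_1$), one gets
\begin{align*}
\|(\lambda I-\A)^{-1}K\|_{\mathscr{L}(L^2_v)}\le \frac{2\varepsilon}{\nu_1}+\max_{1\le j\le N}\|(\lambda I-\A)^{-1}\psi_j\|_{L^2_v},
\end{align*}
so it is enough to prove that for each fixed $\psi\in\S$, $\|(\lambda I-\A)^{-1}\psi\|_{L^2_v}\to 0$ along any sequence $(y_n,\lambda_n=\sigma_n+i\tau_n)$ with $|y_n|+|\tau_n|\to\infty$ and $\Re\lambda_n\ge-\nu_1$.

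Put $g_n:=(\lambda_n I-\widehat{A}(y_n))^{-1}\psi$. Theorem \ref{L2v_regularity} and Lemma \ref{ASchwarz} give $g_n\in\S$ with $\lambda_n g_n+2\pi iy_n\cdot v\,g_n+Ag_n=\psi$ and uniform bounds $\|g_n\|_{L^2_v}\le\tfrac{2}{\nu_1}\|\psi\|_{L^2_v}$, $\|g_n\|_{\H}\le\tfrac{2}{\nu_0}\|\psi\|_{H(a^{-1/2})}$. A preliminary step is to establish a moment bound $\|\<v\>^k g_n\|_{L^2_v}\le C_k$ uniform in $(y_n,\tau_n)$ for every $k\ge 0$, obtained by testing the equation against $\<v\>^{2k}\bar g_n$ and absorbing the commutator $[A,\<v\>^{2k}]$ via the symbolic calculus from Theorem \ref{Thm0}; the Schwartz decay of $\psi$ is what makes this estimate $(y_n,\tau_n)$-independent. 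I then argue by contradiction, assuming $\|g_n\|_{L^2_v}\ge\delta>0$. The case $\sigma_n\to+\infty$ is immediately excluded by Hille--Yoshida, so up to a subsequence $\sigma_n\to\sigma_\infty\in[-\nu_1,\infty)$. If $\{y_n\}$ is bounded (so necessarily $|\tau_n|\to\infty$), take the imaginary part of the $L^2_v$-pairing of the equation with $g_n$: the formal self-adjointness of $A$ (its symbol in $S(a)$ is real) annihilates $\Im(Ag_n,g_n)$ and yields
\begin{align*}
\tau_n\|g_n\|_{L^2_v}^2+2\pi\int y_n\cdot v\,|g_n|^2\,dv=\Im(\psi,g_n)_{L^2_v};
\end{align*}
the moment estimate controls $\big|\int y_n\cdot v\,|g_n|^2\big|\le|y_n|\|g_n\|_{L^2_v}\|\<v\>g_n\|_{L^2_v}$, giving $\|g_n\|^2\le C(1+|y_n|)/|\tau_n|\to 0$ and hence a contradiction.

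The remaining case $|y_n|\to\infty$ is the main obstacle. The plan is to use the uniform moment estimate for tightness in $v$ and the $\H$-bound (which, through the $|\eta|^{2s}$ contribution to $a$, supplies fractional Sobolev regularity) for compactness in Fourier; together, Rellich--Kondrachov extracts a subsequence $g_n\to g_\infty$ strongly in $L^2_v$. Dividing the equation by $|y_n|$ (or by $|\tau_n|$ when it dominates) and passing to the distributional limit, the leading transport term forces $\big((y_n/|y_n|)\cdot v+\alpha\big)g_\infty=0$ for some $\alpha\in\R$, so $g_\infty=0$ almost everywhere, contradicting the lower bound. The delicate technical point is ensuring the moment estimate is genuinely uniform in $(y_n,\tau_n)$: at the critical exponent $\gamma+2s=0$ the weight $a$ fails to grow in $v$, so the Schwartz decay of the source $\psi$ (equivalently, the smoothing property of $K$) is indispensable for restoring the tightness needed to close the argument.
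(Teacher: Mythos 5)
Your proposal replaces the paper's quantitative estimate by a qualitative compactness/contradiction argument, which is a genuinely different route. The paper works directly with the resolvent: it rewrites $\varphi=(\lambda I-\A)^{-1}Kf$ as
\begin{align*}
\varphi=\frac{\sigma_2\<v\>^{-1}\varphi-\sigma\varphi+Kf-A\varphi}{\sigma_2\<v\>^{-1}+i\tau+2\pi i\,v\cdot y},
\end{align*}
convolves with the Gaussian $\Phi_\varepsilon$, controls the convolution by an explicit one-dimensional integral $J$ over the component of $v$ along $y$, and optimizes the auxiliary scale $\sigma_2$ and mollification width $\varepsilon$ as powers of $(1+|\tau|^2+|y|^2)$ to arrive at the explicit rate $(1+|\tau|^2+|y|^2)^{-s\delta/2}$ in \eqref{eq156}. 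You instead use the compactness of $K$ and the uniform resolvent bound of Lemma~\ref{ASchwarz} to reduce to a fixed $\psi\in\S$, extract a strongly convergent subsequence $g_n\to g_\infty$, and pass to a distributional limit where the transport term forces $g_\infty=0$. Both close the lemma; the paper's quantitative rate is in fact not needed downstream, since Theorem~\ref{Thm1} (steps 3--4) only invokes decay to zero of $\|(\lambda I-\A)^{-1}f\|_{L^2}$ along vertical lines, which your version also supplies when applied to a fixed $f\in\S$. Your route is in a sense more robust: it only needs \emph{local} fractional regularity together with tightness, whereas the paper's mollifier error estimate \eqref{Phiconvolutionvarphi} leans on a global bound $\|\<D_v\>^s\varphi\|_{L^2}\lesssim\|f\|_{L^2}$ that is delicate for $\gamma<0$ (since $a^{1/2}\gtrsim\<v\>^{\gamma/2}\<\eta\>^s$ degenerates at large $|v|$). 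What the paper's proof buys is an explicit, checkable rate and no appeal to compactness in an operator-norm statement.

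Two steps in your sketch are treated as obvious but deserve to be spelled out, because they are where the $(y,\tau)$-uniformity could slip. (i) The uniform moment bound $\|\<v\>^k g_n\|_{L^2}\le C_k$ does not fall out of a single pairing with $\<v\>^{2k}\bar g_n$: the commutator $[A,\<v\>^{2k}]\in Op(\<v\>^{2k-1}a\<\eta\>^{-1})$ is not directly absorbable by $\|(a^{1/2})^w\<v\>^{k-1}g_n\|^2$ without the $\Lambda_{k,l}$-iteration of Theorem~\ref{L2v_regularity}. The decisive point that makes the resulting constant $y$-uniform is that $\Lambda_{k,0}=(K_0+|v|^2)^{k/2}$ and $2\pi i\,y\cdot v$ are both multiplication operators, hence commute, so the $y$-dependent commutator of the proof of \eqref{eq166} vanishes at $l=0$; this should be said explicitly rather than attributed to the Schwartz decay of $\psi$ (which only guarantees the right-hand side $\|\<v\>^k\psi\|$ is finite). (ii) The Rellich--Kondrachov step requires combining the local $H^s$ bound on $|v|\le R$ (where $a^{1/2}\gtrsim_R\<\eta\>^s$) with tightness from the moments and a diagonal extraction over $R\to\infty$; as phrased, ``compactness in Fourier'' is a bit vague. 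Neither point is a flaw in the plan, but the proof is not complete until they are written out, and the first one is precisely the place where the argument could silently pick up $y$-dependent constants and collapse.
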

\begin{proof}
Let $f\in\S$ and denote $\varphi = (\lambda I-\A)^{-1}Kf$, $\Phi(v) = e^{-\pi |v|^2}$, $\Phi_\varepsilon(v)=\varepsilon^{-d}\Phi(\varepsilon^{-1}v)$ with $\varepsilon\in(0,1)$. Then $\widehat{\Phi}=\Phi$. 
\begin{align*}
\|\varphi\|_{L^2} \le \|\Phi_\varepsilon*\varphi-\varphi\|_{L^2}+\|\Phi_\varepsilon*\varphi\|_{L^2}.
\end{align*}
For the first term, we have 
\begin{align}
	\|\Phi_\varepsilon*\varphi-\varphi\|_{L^2} &= \|(\widehat{\Phi}(\varepsilon \eta)-1)\widehat{\varphi}\|_{L^2}\notag\\
	&=\big(\int\big|{(e^{-\pi|\varepsilon \eta|^2}-1)}\widehat{\varphi}\big|^2\,dv\big)^{1/2}\notag\\
	&\le \big(\int_{|\eta|\ge\frac{1}{\sqrt{\varepsilon}}}4\varepsilon^{s}|\<\eta\>^{s}\widehat{\varphi}|^2\,dv
	+\int_{|\eta|\le\frac{1}{\sqrt{\varepsilon}}}|1-e^{-\pi\varepsilon}|^2|\widehat{\varphi}|^2\,dv\big)^{1/2}\notag\\
	&\lesssim  (\varepsilon^{s/2}+(1-e^{-\pi\varepsilon}))\|\<\eta\>^s\widehat{\varphi}\|_{L^2}\notag\\
&\lesssim (\varepsilon^{s/2}+(1-e^{-\pi\varepsilon}))\|f\|_{L^2},\label{Phiconvolutionvarphi}
\end{align}where the last inequality following from Lemma \ref{ASchwarz} and the boundedness of $K$.
For the second term, we would like to apply the calculation similar to Lemma 4.2 in \cite{Yang2016}. Let $\sigma_2>1$, depending on $\tau$ and $|\xi|$, to be chosen later. Since $Kf\in\S$, we have $\varphi\in\S$ and so
\begin{align*}
(\lambda +2\pi i v\cdot\xi +A)\varphi(v) &= Kf,
\end{align*}and hence 
\begin{align*}
\varphi &=\frac{\sigma_2\<v\>^{-1}\varphi-\sigma\varphi+Kf-A\varphi}{\sigma_2\<v\>^{-1}+i\tau+2\pi i v\cdot\xi}.
\end{align*}
Noticing $\sigma_2>1$, we have 
\begin{align}
|\Phi_\varepsilon*\varphi|\notag
&= \big|\int\Phi_{\varepsilon}(v-u)\varphi(u)\,du\big|\\
&\le \int\Phi_{\varepsilon}(v-u)\frac{|\varphi|+|\sigma\<u\>\varphi|+\<u\>|Kf(u)|}{(1+(\tau+2\pi u\cdot\xi)^2\<u\>^2/\sigma_2^2)^{1/2}}\,du\notag\\
&\qquad\qquad + \Big|\int\Phi_{\varepsilon}(v-u)\frac{A\varphi(u)}{\sigma_2\<u\>^{-1}+i\tau+2\pi iu\cdot\xi}\,du\Big|\label{convolutionI1I2}\\
&=:I_1+I_2.\notag
\end{align}
For $I_1$, by H\"{o}lder inequality, 
\begin{align}
|I_1|^2&=\Big|\int\Phi_{\varepsilon}(v-u)\frac{|\varphi|+|\sigma\<u\>\varphi|+\<u\>|Kf|}{(1+(\tau+2\pi u\cdot\xi)^2\<u\>^2/\sigma_2^2)^{1/2}}\,du\Big|^2\notag\\
&\le \int\Phi_{\varepsilon}(v-u)\big(|\varphi|+|\sigma\<u\>\varphi|+\<u\>|Kf|\big)^2\,du
\times
\int\frac{\Phi_{\varepsilon}(v-u)}{1+(\tau+2\pi u\cdot\xi)^2\<u\>^2/\sigma_2^2}\,du.\label{I11}
\end{align}
We will use the decomposition: $u = \tilde{u}\frac{\xi}{|\xi|} + u'$ with $\tilde{u}=\frac{u\cdot\xi}{|\xi|}$ and $u'=u-\tilde{u}\frac{\xi}{|\xi|}$. Then $\xi\perp u'$ and 
\begin{align}\label{estimateofJ}
\int\frac{\Phi_{\varepsilon}(v-u)}{1+(\tau+2\pi u\cdot\xi)^2\<u\>^2/\sigma_2^2}\,du
&\le \int_\R\int_{\R^{d-1}}\frac{\varepsilon^{-d}e^{-\pi |\frac{\tilde{u}-\tilde{v}}{\varepsilon}|^2-\pi |\frac{u'-v'}{\varepsilon}|^2}}{1+(\tau+2\pi \tilde{u}|\xi|)^2\<\tilde{u}\>^2/\sigma_2^2}\,du'd\tilde{u}\\
&\le \int_\R\frac{\varepsilon^{-1}}{1+(\tau+2\pi\notag \tilde{u}|\xi|)^2\<\tilde{u}\>^2/\sigma_2^2}\,d\tilde{u}\\
&=:J.\notag
\end{align}
It's easy to get 
\begin{align*}
J \le \varepsilon^{-1}\int_\R\frac{1}{1+(\tau+2\pi \tilde{u}|\xi|)^2/\sigma_2^2}\,d\tilde{u}\lesssim \frac{\sigma_2}{\varepsilon|\xi|}.
\end{align*}
On the other hand, notice $|\tilde{u}|\le\frac{|\tau|}{4\pi|\xi|}$ implies $|\tau+2\pi \tilde{u}|\xi||\ge |\tau|-|2\pi\tilde{u}|\xi||\ge \frac{|\tau|}{2}$.
\begin{align*}
J &=\frac{1}{\varepsilon} \int_{|\tilde{u}|\le\frac{|\tau|}{4\pi|\xi|}}\frac{1}{1+(\tau+2\pi \tilde{u}|\xi|)^2\<\tilde{u}\>^2/\sigma_2^2}\,d\tilde{u}
+\frac{1}{\varepsilon}\int_{|\tilde{u}|\ge\frac{|\tau|}{4\pi|\xi|}}\frac{1}{1+(\tau+2\pi \tilde{u}|\xi|)^2\<\tilde{u}\>^2/\sigma_2^2}\,d\tilde{u}\\
&\le \frac{1}{\varepsilon} \int_{|\tilde{u}|\le\frac{|\tau|}{4\pi|\xi|}}\frac{1}{1+(\frac{|\tau|\tilde{u}}{2\sigma_2})^2}\,d\tilde{u}
+\frac{1}{\varepsilon}\int_{|\tilde{u}|\ge\frac{|\tau|}{4\pi|\xi|}}\frac{1}{1+(\tau+2\pi \tilde{u}|\xi|)^2\frac{|\tau|^2}{(4\pi|\xi|\sigma_2)^2}}\,d\tilde{u}\\
&\lesssim \frac{\sigma_2}{\varepsilon|\tau|}.
\end{align*}
Thus combining the above two estimates, 
\begin{align}
J\lesssim \frac{\sigma_2}{\varepsilon(|\tau|^2+|\xi|^2)^{1/2}}.\label{estimateofJ_2}
\end{align}
Recall \eqref{I11} and applying \eqref{varphi_v_beta}, we have 
\begin{align}
\|I_1\|_{L^2_v}^2&\lesssim J\times \int\int\Phi_{\varepsilon}(v-u)\big(|\varphi|+|\sigma\<u\>\varphi|+\<u\>|Kf|\big)^2\,dudv\notag\\
&\lesssim J\times \big\||\varphi|+|\sigma\<u\>\varphi|+\<u\>|Kf|\big\|^2_{L^2_u}\notag\\
&\lesssim J\times\big\|\<u\>Kf\big\|^2_{L^2_u}\notag\\
&\lesssim \frac{\sigma_2}{\varepsilon(|\tau|^2+|\xi|^2)^{1/2}}\big\|f\big\|^2_{L^2_u}\label{estimateI1},
\end{align}since $K\in S(\<v\>^{-1})$, where the constant may depend of $\sigma$, the real part of $\lambda$.
For part $I_2$,
\begin{align*}
I_2 &= \Big|\Big(\frac{\<u\>^{-2}\Phi_{\varepsilon}(v-u)}{\sigma_2\<u\>^{-1}+i\tau+2\pi iu\cdot\xi},\<u\>^2A\varphi(u)\Big)_{L^2_u}\Big|\\
&= \Big|\Big(\<v-u\>^{d}\<D_u\>^{s}\Big(\frac{\<u\>^{-2}\Phi_{\varepsilon}(v-u)}{\sigma_2\<u\>^{-1}+i\tau+2\pi iu\cdot\xi}\Big),\<v-u\>^{-d}\<D_u\>^{-s}\<u\>^2A\varphi(u)\Big)_{L^2_u}\Big|\\
&\le \Big\|\<v-u\>^{d}\<D_u\>^{s}\frac{\<u\>^{-2}\Phi_{\varepsilon}(v-u)}{\sigma_2\<u\>^{-1}+i\tau+2\pi iu\cdot\xi}\Big)\Big\|_{L^2_u}
\Big\|\<v-u\>^{-d}\<D_u\>^{-s}\<u\>^2A\varphi(u)\Big\|_{L^2_u}
\end{align*}
We analyze the first part: Note $\sigma_2>1$, we have 
\begin{align*}
&\quad\ \Big\|\<v-u\>^{d}\<D_u\>^{s}\Big(\frac{\<u\>^{-2}\Phi_{\varepsilon}(v-u)}{\sigma_2\<u\>^{-1}+i\tau+2\pi iu\cdot\xi}\Big)\Big\|_{L^2_u}\\
&\lesssim 
\Big\|\<D_u\>^{s}\Big(\frac{\<v-u\>^{d}\<u\>^{-2}\Phi_{\varepsilon}(v-u)}{\sigma_2\<u\>^{-1}+i\tau+2\pi iu\cdot\xi}\Big)\Big\|_{L^2_u}\\
&\lesssim \Big\|\frac{\<v-u\>^{d}\<u\>^{-2}\Phi_{\varepsilon}(v-u)}{\sigma_2\<u\>^{-1}+i\tau+2\pi iu\cdot\xi}\Big\|_{H^1_u}\\
&\lesssim 
\Big\|\frac{\<v-u\>^{d}\<u\>^{-2}\Phi_{\varepsilon}(v-u)}{\sigma_2\<u\>^{-1}+i\tau+2\pi iu\cdot\xi}\Big\|_{L^2_u}
+\Big\|\frac{\<v-u\>^{d-1}\<u\>^{-2}\Phi_{\varepsilon}(v-u)}{\sigma_2\<u\>^{-1}+i\tau+2\pi iu\cdot\xi}\Big\|_{L^2_u}\\
&\qquad+\Big\|\frac{\<v-u\>^{d}\<u\>^{-3}\Phi_{\varepsilon}(v-u)}{\sigma_2\<u\>^{-1}+i\tau+2\pi iu\cdot\xi}\Big\|_{L^2_u}
+\Big\|\frac{\<v-u\>^{d}\<u\>^{-2}\varepsilon^{-1}(\nabla_u\Phi)_{\varepsilon}(v-u)}{\sigma_2\<u\>^{-1}+i\tau+2\pi iu\cdot\xi}\Big\|_{L^2_u}\\
&\qquad+\Big\|\frac{\<v-u\>^{d}\<u\>^{-2}\Phi_{\varepsilon}(v-u)\big(\sigma_2\<u\>^{-2}+|\xi|\big)}{(\sigma_2\<u\>^{-1}+i\tau+2\pi iu\cdot\xi)^2}\Big\|_{L^2_u}\\
&\lesssim\Big(\int\Big|\frac{\sigma_2\varepsilon^{-d-1}\exp(-\frac{\pi}{2}\big|\frac{v-u}{\varepsilon}\big|^2)}{\sigma_2+(i\tau+2\pi iu\cdot\xi)\<u\>}\Big|^2\,du\Big)^{1/2} + \Big(\int\Big|\frac{|\xi|\varepsilon^{-d}\exp(-\frac{\pi}{2}\big|\frac{v-u}{\varepsilon}\big|^2)}{(\sigma_2+(i\tau+2\pi iu\cdot\xi)\<u\>)^2}\Big|^2\,du\Big)^{1/2}\\
&=:J_1.
\end{align*}
For the first integral, we apply the estimate \eqref{estimateofJ} \eqref{estimateofJ_2}, while for the second integral, we make a rough estimate by canceling $(i\tau+2\pi iu\cdot\xi)\<u\>$. Thus
\begin{align*}
|J_1|^2&\lesssim\int\frac{\sigma^2_2\varepsilon^{-d-2}\Phi_{\varepsilon}(v-u)}{\sigma^2_2+(\tau+2\pi u\cdot\xi)^2\<u\>^2}\,du + \int\frac{|\xi|^2\varepsilon^{-d}\Phi_{\varepsilon}(v-u)}{\sigma^4_2}\,du\\
&\lesssim \varepsilon^{-d-2}J+\varepsilon^{-d}\sigma_2^{-4}|\xi|^2\\
&\lesssim \frac{\sigma_2}{\varepsilon^{d+3}(|\tau|^2+|\xi|^2)^{1/2}} + \frac{|\xi|^2}{\varepsilon^d\sigma_2^4}.
\end{align*}
Thus similar to $I_1$, we have 
\begin{align*}
\|I_2\|^2_{L^2_v}
&\lesssim |J_1|^2\times
\|\<v-u\>^{-d}\<D_u\>^{-s}\<u\>^2A\varphi(u)\|^2_{L^2_v(L^2_u)}\\
&\lesssim |J_1|^2\times
\|\underbrace{\<D_u\>^{-s}\<u\>^2A\<u\>^{-2-\gamma/2-s}}_{\in Op(\<u\>^{\gamma/2+s})\subset Op(a^{1/2})}\<u\>^{2+\gamma/2+s}\varphi\|^2_{L^2_u}\\
&\lesssim|J_1|^2\times\|(a^{1/2})^w\<u\>^{2+\gamma/2+s}\varphi\|^2_{L^2_u}\\
&\lesssim|J_1|^2\times\|\<u\>^{2+\gamma/2+s}Kf\|^2_{L^2_u}\\
&\lesssim \Big(\frac{\sigma_2}{\varepsilon^{d+3}(|\tau|^2+|\xi|^2)^{1/2}} + \frac{|\xi|^2}{\varepsilon^d\sigma_2^4}\Big)\|f\|_{L^2}^2,
\end{align*}where we apply \eqref{varphi_v_beta} and $A\in S(a)\subset S(\<\eta\>^{2s}\<v\>^{\gamma+2s})$, $K\in S(\<u\>^{-2-\gamma/2-s})$.
Together with \eqref{convolutionI1I2} \eqref{estimateI1}, we have 
\begin{align*}
\|\Phi_{\varepsilon}*f\|^2_{L^2_v}&\lesssim \Big(\frac{\sigma_2}{\varepsilon(|\tau|^2+|\xi|^2)^{1/2}}+\frac{\sigma_2}{\varepsilon^{d+3}(|\tau|^2+|\xi|^2)^{1/2}} + \frac{|\xi|^2}{\varepsilon^d\sigma_2^4}\Big)\|f\|_{L^2}^2\\
&\lesssim \Big(\frac{\sigma_2}{\varepsilon^{d+3}(|\tau|^2+|\xi|^2)^{1/2}} + \frac{|\xi|^2}{\varepsilon^d\sigma_2^4}\Big)\|f\|_{L^2}^2,
\end{align*}for $\varepsilon\in(0,1)$. 
Choose $\delta\in(0,1/(6d+32))$ small and 
\begin{align*}
\sigma_2:=(1+|\tau|^2+|\xi|^2)^{\frac{1}{2}-(d+4)\delta},\quad
\varepsilon:=(1+|\tau|^2+|\xi|^2)^{-\delta}
\end{align*}
Then, since $|\tau|\ge 1$, 
\begin{align*}
\|\Phi_{\varepsilon}*f\|^2_{L^2_v}\lesssim  \Big(\frac{1}{(1+|\tau|^2+|\xi|^2)^{\delta}} + \frac{1}{(1+|\tau|^2+|\xi|^2)^{1-(3d+16)\delta}}\Big)\|f\|_{L^2}^2.
\end{align*}Together with \eqref{Phiconvolutionvarphi}, 
\begin{align}\label{eq156}
\|\varphi\|_{L^2_v}\lesssim \frac{1}{(1+|\tau|^2+|\xi|^2)^{s\delta/2}}\|f\|_{L^2}.
\end{align}
\qe\end{proof}

The next theorem shows the uniformly boundedness of $(\lambda I-\B)^{-1}$ and $(I-(\lambda I-\A)^{-1}K)^{-1}$.
\begin{Thm}\label{Bcontinuous}
Write $\lambda = \sigma+i\tau$. Take $\xi_0,\sigma_0>0$ from Theorem \ref{eigenstruture} and with this $\xi_0$, we choose $\sigma_1$ from the statement (4) in Theorem \ref{spectrum_struture}. 
Let $\kappa$ equal to $\sigma_0$ when $|\xi|\le\xi_0$, and equal to $\sigma_1$ when $|\xi|\ge\xi_0$. Such choice assures that $(\kappa+i\tau) I-\B$ is invertible.
Then 
\begin{equation}\begin{split}\label{eq41}
\sup_{\xi\in\Rd,\Re\lambda=-\kappa}\|(I-(\lambda I-\A)^{-1}K)^{-1}\|_{\mathscr{L}(L^2)}<\infty,\\
\sup_{\xi\in\Rd,\Re\lambda=-\kappa}\|(\lambda I-\B)^{-1}\|_{\mathscr{L}(L^2)}<\infty.\end{split}
\end{equation}
\end{Thm}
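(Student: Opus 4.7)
The plan is to treat the two statements of \eqref{eq41} in sequence. First, I would reduce the second estimate to the first via the factorization $\lambda I-\B=(\lambda I-\A)(I-(\lambda I-\A)^{-1}K)$, which yields
\begin{align*}
(\lambda I-\B)^{-1}=(I-(\lambda I-\A)^{-1}K)^{-1}(\lambda I-\A)^{-1}.
\end{align*}
Since $\sigma_0<\nu_1/2$ and $\sigma_1<\nu_1$ guarantee $\sigma_y<\nu_1$ in both regimes, estimate \eqref{estiA} of lemma \ref{ASchwarz} provides $\|(\lambda I-\A)^{-1}\|_{\mathscr{L}(L^2)}\le 2/\nu_1$ uniformly for $\Re\lambda=-\sigma_y$. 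Hence the second bound in \eqref{eq41} follows from the first.

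To prove uniform boundedness of $(I-(\lambda I-\A)^{-1}K)^{-1}$ along $\Re\lambda=-\sigma_y$, I would split the parameter space $\{(y,\tau):y\in\Rd,\ \tau\in\R\}$ into a large piece $\Omega_R:=\{|y|+|\tau|\ge R\}$ and its complement. On $\Omega_R$, lemma \ref{Largeness} (together with a minor adaptation to cover $|\tau|<1$ with $|y|$ large, where the decay comes purely from $|y|$) yields $\|(\lambda I-\A)^{-1}K\|_{\mathscr{L}(L^2)}\le 1/2$ provided $R$ is chosen sufficiently large. Consequently the Neumann series $\sum_{n\ge 0}((\lambda I-\A)^{-1}K)^n$ converges in operator norm with $\|(I-(\lambda I-\A)^{-1}K)^{-1}\|_{\mathscr{L}(L^2)}\le 2$ on $\Omega_R$.

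On the compact complement $\{|y|+|\tau|\le R\}\cap\{\Re\lambda=-\sigma_y\}$, I would argue by continuity plus compactness. The pointwise invertibility of $I-(\lambda I-\A)^{-1}K$ is already known: for $|y|\le y_0$ the resolvent set $\rho(\B)$ contains the strip $\{-2\sigma_0\le \Re\lambda\le -\sigma_0/2\}$ by theorem \ref{eigenstruture}, while for $|y|\ge y_0$ the strip $\{-2\sigma_1\le\Re\lambda\le 0\}$ lies in $\rho(\B)$ by theorem \ref{spectrum_struture} (4). Hence the map $(y,\tau)\mapsto (I-(\lambda I-\widehat{A}(y))^{-1}K)^{-1}$, with $\lambda=-\sigma_y+i\tau$, is well-defined on the compact parameter set, and if it is norm-continuous then the supremum over that set is finite.

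The main obstacle is the norm-continuity claim, since the perturbation in $y$ is unbounded. The second resolvent identity gives
\begin{align*}
\bigl[(\lambda_n I-\widehat{A}(y_n))^{-1}-(\lambda I-\widehat{A}(y))^{-1}\bigr]K
=(\lambda_n I-\widehat{A}(y_n))^{-1}\bigl[(\lambda-\lambda_n)I+2\pi i(y-y_n)\cdot v\bigr](\lambda I-\widehat{A}(y))^{-1}K,
\end{align*}
and the unbounded factor $v$ must be absorbed. The rescue is that the symbol of $K$ belongs to $S(\<v\>^{-k}\<\eta\>^{-l})$ for every $k,l\ge 0$ by theorem \ref{Thm0}, so $Kf$ is Schwartz, while estimate \eqref{varphi_v_beta} in theorem \ref{L2v_regularity} controls $\|\<v\>^k\<D_v\>^l(\lambda I-\widehat{A}(y))^{-1}Kf\|_{L^2}$ by $\|f\|_{L^2}$ with constant locally uniform in $(y,\lambda)$. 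Combined with the uniform $L^2\to L^2$ bound on $(\lambda_n I-\widehat{A}(y_n))^{-1}$ from lemma \ref{ASchwarz}, this gives norm-continuity of $(y,\lambda)\mapsto(\lambda I-\widehat{A}(y))^{-1}K$, and a standard perturbation argument transfers this to norm-continuity of its inverse on the compact region, completing the plan.
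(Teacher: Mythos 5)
Your proposal follows essentially the same route as the paper: factor $(\lambda I-\B)^{-1}=(I-(\lambda I-\A)^{-1}K)^{-1}(\lambda I-\A)^{-1}$, invoke lemma \ref{Largeness} together with a Neumann series for the large-parameter region, and combine compactness with the norm-continuity of $(\lambda I-\widehat{A}(y))^{-1}K$ (proved via the $S(\<v\>^{-k}\<\eta\>^{-l})$-regularity of $K$ and estimate \eqref{varphi_v_beta}) on the bounded remainder. The only small extra care you flag — extending lemma \ref{Largeness} to $|\tau|<1$, $|y|$ large via the decay in $|y|$ alone — is a legitimate refinement that the paper glosses over, and is indeed available since the bound $J\lesssim\sigma_2/(\varepsilon|y|)$ in that lemma's proof does not require $|\tau|\ge 1$.
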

\begin{proof}
  1. Fix $\lambda \in \rho(\A)\cap\rho(\B)$.
  Since $\B = \A +K$, we have
  \begin{align*}
    \lambda I-\B &= \lambda I -\A - K = (\lambda I-\A)(I-(\lambda I-\A)^{-1}K),\\
    I &= (\lambda I-\A)^{-1}(\lambda -\B)+(\lambda -\A)^{-1}K.
  \end{align*}
Thus $I-(\lambda I-\A)^{-1}K$ is invertible on $L^2$ and
\begin{align*}
  (\lambda I-\B)^{-1} &= (I-(\lambda I-\A)^{-1}K)^{-1}(\lambda I-\A)^{-1}\\
  &= (\lambda I-\A)^{-1}+(\lambda -\A)^{-1}K(\lambda I-\B)^{-1}.
\end{align*}
Hence
\begin{align*}
  (\lambda I-\B)^{-1} &= (\lambda -\A)^{-1}+(\lambda -\A)^{-1}K(I-(\lambda I-\A)^{-1}K)^{-1}(\lambda -\A)^{-1}.
\end{align*}

Next we prove the continuity of $(I-(\lambda I-\A)^{-1}K)^{-1}$.
For any $\xi_1,\xi_2\in\R^d$, $\Re\lambda_1,\Re\lambda_2\ge2\sigma_1$ with $\lambda_1\in\rho(\widehat{A}(\xi_1))\cap\rho(\widehat{B}(\xi_1))$, $\lambda_2\in\rho(\widehat{A}(\xi_2))\cap\rho(\widehat{A}(\xi_2))$, we have
\begin{align*}
  &\quad\ (I-(\lambda_1 I-\widehat{A}(\xi_1))^{-1}K)^{-1} - (I-(\lambda_2 I-\widehat{A}(\xi_2))^{-1}K)^{-1}\\
  &= (I-(\lambda_1 I-\widehat{A}(\xi_1))^{-1}K)^{-1}\big((\lambda_1I-\widehat{A}(\xi_1))^{-1}-(\lambda_2I-\widehat{A}(\xi_2))^{-1}\big)K\\
&\qquad\qquad\qquad\qquad\qquad\qquad\qquad\qquad\qquad(I-(\lambda_2 I-\widehat{A}(\xi_2))^{-1}K)^{-1}
\end{align*}
We first deal with the middle term. Let $f\in\S$, then $Kf\in\S$, $(\lambda_2I-\widehat{A}(\xi_2))^{-1}Kf\in\S$ from Theorem \ref{L2v_regularity}. Thus
\begin{align*}
  &\quad\ \big((\lambda_1I-\widehat{A}(\xi_1))^{-1}-(\lambda_2I-\widehat{A}(\xi_2))^{-1}\big)Kf\\
  &= (\lambda_1I-\widehat{A}(\xi_1))^{-1}\big(\lambda_2I-\widehat{A}(\xi_2)-\lambda_1I+\widehat{A}(\xi_1)\big)(\lambda_2I-\widehat{A}(\xi_2))^{-1}Kf\\
  &= (\lambda_1I-\widehat{A}(\xi_1))^{-1}\big(\lambda_2I-\lambda_1I+2\pi i(\xi_2-\xi_1)\cdot v)\big)(\lambda_2I-\widehat{A}(\xi_2))^{-1}Kf.
\end{align*}
By \eqref{varphi_v_beta},
\begin{align*}
 &\quad\ \|\big((\lambda_1I-\widehat{A}(\xi_1))^{-1}-(\lambda_2I-\widehat{A}(\xi_2))^{-1}\big)Kf\|_{L^2_v}\\
 &\le C\|\big(\lambda_2-\lambda_1+2\pi i(\xi_2-\xi_1)\cdot v)\big)(\lambda_2I-\widehat{A}(\xi_2))^{-1}Kf\|_{L^2_v}\\
 &\le C\big(|\lambda_2-\lambda_1|+|\xi_2-\xi_1|\big)\|\<v\>(\lambda_2I-\widehat{A}(\xi_2))^{-1}Kf\|_{L^2_v}\\
 &\le C\big(|\lambda_2-\lambda_1|+|\xi_2-\xi_1|\big)\|\<v\>Kf\|_{L^2_v}\\
 &\le C\big(|\lambda_2-\lambda_1|+|\xi_2-\xi_1|\big)\|f\|_{L^2_v}.
\end{align*}
Thus $(\lambda I-\A)^{-1}$ is continuous with respect to $(\lambda,y)$. Since $\big((\lambda_1I-\widehat{A}(\xi_1))^{-1}-(\lambda_2I-\widehat{A}(\xi_2))^{-1}\big)K$ is bounded on $L^2_v$, the above estimate is also valid for $f\in L^2$ by density argument.
Fix $\lambda_2,\xi_2$ and set $(\lambda_1,\xi_1)$ sufficiently close to $(\lambda_2,\xi_2)$ such that $\|\big((\lambda_1I-\widehat{A}(\xi_1))^{-1}-(\lambda_2I-\widehat{A}(\xi_2))^{-1}\big)K\|_{\mathscr{L}(L^2)}\le \frac{1}{2\|(\lambda_2 I -\widehat{A}(\xi_2))^{-1}\|_{\mathscr{L}(L^2)}}$.
Therefore, applying Lemma \ref{operator_inverse} to $(I-(\lambda_1 I-\widehat{A}(\xi_1))^{-1}K)^{-1}$, we have 
\begin{align*}
  &\quad\ \|(I-(\lambda_1 I-\widehat{A}(\xi_1))^{-1}K)^{-1} - (I-(\lambda_2 I-\widehat{A}(\xi_2))^{-1}K)^{-1}\|_{\mathscr{L}(L^2)}\\
  &\le \frac{1}{2}\|\big((\lambda_1I-\widehat{A}(\xi_1))^{-1}-(\lambda_2I-\widehat{A}(\xi_2))^{-1}\big)K\|_{\mathscr{L}(L^2)}\|(I-(\lambda_2 I-\widehat{A}(\xi_2))^{-1}K)^{-1}\|_{\mathscr{L}(L^2)}\\
  &\le C\big(|\lambda_1-\lambda_2|+|\xi_1-\xi_2|\big)\|(I-(\lambda_2 I-\widehat{A}(\xi_2))^{-1}K)^{-1}\|_{\mathscr{L}(L^2)}.
\end{align*}
In conclusion, $(\lambda I-\A)^{-1}$, $(I-(\lambda I-\widehat{A}(\xi))^{-1}K)^{-1}$ and hence $(\lambda I-\B)^{-1}$, as operators on $L^2$, are continuous with respect to $\lambda\in \sigma(\B)\cap\sigma(\A)$ and $\xi\in\R^d$.

2. By \eqref{eq25} \eqref{estiA}, there exists $\tau_1>0$ such that for $\xi\in\R^d$, $\|(\lambda I-\A)^{-1}f\|_{\mathscr{L}(L^2)}$ and $\|(I-(\lambda I-\widehat{A}(\xi))^{-1}K)^{-1}\|_{\mathscr{L}(L^2)}$ are uniformly bounded on $\{\lambda\in\C:|\Im\lambda|\ge \tau_1\}$. For $|\tau|$ large, we can use this uniformly bounded estimate, while for $|\tau|$ small, we can apply the continuity in step one.

Choose $\xi_0,\sigma_0$ in Theorem \ref{eigenstruture} and apply this $\xi_0$ to Theorem \ref{spectrum_struture} (4) to find $\sigma_1\in(0,\nu_1)$ such that
\begin{align*}
\sigma(\B)\cap \{\lambda\in\C:-2\sigma_0\le\Re\lambda\le 0\}&=\{\lambda_j\}_{j=0}^{d+1}, \text{ for } |\xi|\le\xi_0. \\
\sigma(\B)\cap \{\lambda\in\C:-2\sigma_1\le\Re\lambda\le 0\}&=\emptyset, \qquad\text{ for } |\xi|\ge\xi_0. 
\end{align*}Then Theorem \ref{eigenstruture} gives that $\rho(\B)\supset\{\lambda:\Re\lambda=\kappa\}$ for $\xi\in\Rd$ where $(\lambda I-\B)^{-1}$ is continuous and we derive the bound \eqref{eq41}.
\qe\end{proof}

\section{Regularizing estimate for linearized equation}\label{sec4}

Assume $\gamma+2s\ge0$ in this section, then $\|\cdot\|_{L^2}\lesssim\|\cdot\|_{\H}$. We are concerned with the proof to our main Theorem \ref{Thm1}: the regularizing estimate of semigroup $e^{tB}$. 
\begin{Thm}\label{Bbound}
Let $f\in\S$, $\xi\in\Rd$, choose $\kappa$ as \eqref{sigma_xi}. Then for $\tau\in\R$, we have $\lambda:=\kappa+i\tau\in\rho(\B)\cap \rho(\A)$ as in Theorem \ref{Bcontinuous}. There exists $C>0$ independent of $y$ such that for $f\in\S$, 
\begin{align*}
\|(\lambda I-\B)^{-1}f\|_{H(a^{1/2})}\le C\|f\|_{H(a^{-1/2})}.
\end{align*}
\end{Thm}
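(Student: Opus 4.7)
The plan is to combine the regularizing resolvent estimate for $\A$ from Lemma \ref{ASchwarz}, the uniform $L^2$-bound on $(\lambda I-\B)^{-1}$ from Theorem \ref{Bcontinuous}, and the smoothing nature of $K$ established in Theorem \ref{Thm0}. The starting point is the resolvent identity
\begin{align*}
(\lambda I-\B)^{-1} = (I-(\lambda I-\A)^{-1}K)^{-1}(\lambda I-\A)^{-1}
\end{align*}
derived in the proof of Theorem \ref{Bcontinuous}. Setting $h:=(\lambda I-\A)^{-1}f$ and $g:=(\lambda I-\B)^{-1}f$, one has the bootstrap relation $g = h + (\lambda I-\A)^{-1}Kg$. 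The strategy is to show separately that $h$ is already in $\H$ with the right bound, and that the corrector $(\lambda I-\A)^{-1}Kg$ is controlled by $\|h\|_{\H}$.

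For $h$: since $\Re\lambda = -\sigma_y \ge -\nu_1$ (because $\sigma_y \le \nu_1$ by construction in Theorem \ref{Bcontinuous}), Lemma \ref{ASchwarz} gives directly $\|h\|_{\H}\le \frac{2}{\nu_0}\|f\|_{H(a^{-1/2})}$. For the corrector: another application of Lemma \ref{ASchwarz} yields $\|(\lambda I-\A)^{-1}Kg\|_{\H}\le \frac{2}{\nu_0}\|Kg\|_{H(a^{-1/2})}$. Since $K$ has symbol in $S(\<v\>^{-k}\<\eta\>^{-l})$ for arbitrary $k,l\ge 0$ by Theorem \ref{Thm0}, the composition $(a^{-1/2})^w K$ has symbol in $S(1)$ by the symbolic calculus, so $K:L^2\to H(a^{-1/2})$ is bounded, uniformly in $y,\tau$ (this operator does not depend on $\lambda$ or $y$), giving $\|Kg\|_{H(a^{-1/2})}\le C\|g\|_{L^2}$.

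It remains to estimate $\|g\|_{L^2}$. Here the uniform $L^2$-bound $\|(\lambda I-\B)^{-1}\|_{\mathscr{L}(L^2)}\le C$ from \eqref{eq41} of Theorem \ref{Bcontinuous} is not directly applicable to $\|f\|_{H(a^{-1/2})}$, so instead I would factor as $g = (I-(\lambda I-\A)^{-1}K)^{-1}h$ and use the uniform bound on $(I-(\lambda I-\A)^{-1}K)^{-1}$ on $L^2$ from \eqref{eq41}, obtaining $\|g\|_{L^2}\le C\|h\|_{L^2}\le C\|h\|_{\H}$, where the last step uses the continuous embedding $\H\hookrightarrow L^2$ which holds since $a\ge K_0\ge 1$ when $\gamma+2s\ge 0$. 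Chaining the estimates gives
\begin{align*}
\|g\|_{\H} \le \|h\|_{\H} + C\|g\|_{L^2} \le C\|h\|_{\H}\le C\|f\|_{H(a^{-1/2})},
\end{align*}
completing the proof.

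The main obstacle is guaranteeing that every constant above is genuinely independent of $y\in\R^d$ and $\tau\in\R$ on the line $\Re\lambda = -\sigma_y$. The nontrivial input is precisely the uniformity asserted in Theorem \ref{Bcontinuous}, which was the purpose of the spectrum analysis in Section \ref{sec3}: without it, the factor $\|(I-(\lambda I-\A)^{-1}K)^{-1}\|_{\mathscr{L}(L^2)}$ could blow up as $|y|$ ranges near the transition between the regimes $|y|\le y_0$ (eigenvalue expansion) and $|y|\ge y_1$ (decay of $(\lambda I-\A)^{-1}K$ from Lemma \ref{Largeness}). All other constants ($2/\nu_0$ from Lemma \ref{ASchwarz}, the norm of $K:L^2\to H(a^{-1/2})$, and the embedding constant) are $y$-independent by symbolic considerations.
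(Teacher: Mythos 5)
Your proof is correct and uses the same ingredients in essentially the same order as the paper: the regularizing resolvent bound \eqref{esti_on_H} of Lemma \ref{ASchwarz}(2), the $v$-smoothing of $K$ from Theorem \ref{Thm0}, the uniform $L^2$-bound on $(I-(\lambda I-\A)^{-1}K)^{-1}$ from Theorem \ref{Bcontinuous}, and the hard-potential embedding $\H\hookrightarrow L^2$. The paper phrases the first step as an energy estimate, pairing $(\lambda I-\A)\varphi=f+K\varphi$ against $\varphi$ in $L^2$, which is just Lemma \ref{ASchwarz}(2) derived inline, so the two arguments are substantively identical.
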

\begin{proof}
Write $\varphi = (\lambda I-\B)^{-1}f$. Then 
\begin{align*}
(\lambda I-\A)\varphi = f + K\varphi.
\end{align*}
A similar argument to step 4 in Theorem \ref{spectrum_struture} gives $\varphi\in\S$ and hence taking inner product with $\varphi$, we have 
\begin{align*}
\|\varphi\|^2_{\H}
&\lesssim \Re((\lambda I-\A)\varphi,\varphi)_{L^2}\\
&=  \Re(f+K\varphi,\varphi)_{L^2}\\
&\lesssim \|f\|_{H(a^{-1/2})}\|\varphi\|_{\H}+\|\varphi\|^2_{L^2}.
\end{align*}
Recall that  $(\lambda I -\B)^{-1} = (I-(\lambda I-\A)^{-1}K)^{-1}(\lambda I-\A)^{-1}$
and notice the two inverse operator on the right are uniformly bounded in $L^2$ from Theorem \ref{Bcontinuous}, we have
\begin{align*}
\|\varphi\|_{\H}
&\lesssim \|f\|_{H(a^{-1/2})}+\|(\lambda I -\B)^{-1}f\|_{L^2}\\
&\lesssim \|f\|_{H(a^{-1/2})}+\|(\lambda I -\A)^{-1}f\|_{L^2}\\
&\lesssim \|f\|_{H(a^{-1/2})},
\end{align*}where the last inequality follows from \eqref{estiA}. Also these constants are independent of $y$. 
\qe\end{proof}

With the smoothing effect of $(\lambda I-\B)^{-1}$, we can prove our main Theorem \ref{Thm1}. 
\begin{proof}[Proof of Theorem \ref{Thm1}]

1. Write $\lambda = \sigma+i\tau$.
Take $\xi_0,\sigma_0>0$ from Theorem \ref{eigenstruture} and with this $\xi_0$, we choose $\sigma_1$ from the statement (4) in Theorem \ref{spectrum_struture}.
Define
\begin{align}\label{sigma_xi}\kappa = \left\{
\begin{aligned}
\sigma_0, \text{ if } |\xi|\le\xi_0,\\
\sigma_1, \text{ if } |\xi|\ge\xi_0.
\end{aligned}\right.
\end{align}
Then for $|\xi|\le\xi_0$, 
\begin{align*}
\rho(\B)&\supset\{\lambda:-2\kappa\le\Re\lambda\le -\frac{\kappa}{2}\}
\cup\{\lambda:\Re\lambda\ge-2\kappa,|\Im\lambda|\ge \frac{\kappa}{2}\},
\end{align*}
and for $|\xi|\ge\xi_0$, 
\begin{align*}
\rho(\B)\supset\{\lambda\in\C:-2\kappa<\Re\lambda\}.
\end{align*}

2. For $f\in \S$, by Corollary III.5.15 in \cite{Engel1999}, there exists $\sigma_2>0$ such that
  \begin{align*}
    e^{t\B}f &= \frac{1}{2\pi i}\lim_{n\to\infty}\int^{\sigma_2+in}_{\sigma_2-in}e^{\lambda t}(\lambda I-\B)^{-1}f\,d\lambda,
  \end{align*}where the limit is taken in $L^2$.
Since $e^{\lambda t}(\lambda I-\B)^{-1}f$ is analytic with respect to $\lambda\in\rho(\B)$, the Cauchy theorem on holomorphic function and Theorem XV.2.2 in \cite{Gohberg1990} yield that 
\begin{align}\label{eq65}\notag
&\int^{\sigma_2+in}_{\sigma_2-in}e^{\lambda t}(\lambda I-\B)^{-1}f\,d\lambda
\\&\quad= \Big(\int^{\sigma_2+in}_{-\kappa+in}+\int^{-\kappa-in}_{\sigma_2-in} +\int^{-\kappa+in}_{-\kappa-in}\Big)e^{\lambda t}(\lambda I-\B)^{-1}f\,d\lambda
+\1_{|\xi|\le\xi_0}e^{\lambda_j(|\xi|)}P_{j}f.
\end{align}where $n>\kappa$ and  $
P_{j}=\int_{\Gamma_j}(\lambda I-\B)^{-1}\,d\lambda,$ with $\Gamma_j$ being the smooth boundary of neighborhood that contains only eigenvalue $\lambda_j(|\xi|)$ and separate from other eigenvalues. 
The rectangular curve in Cauchy formula \eqref{eq65} is shown in Figure \ref{fig3}. 
  \begin{figure}[htbp]
	\centering
	\begin{minipage}[t]{0.25\linewidth}
		\centering
		\begin{figure}[H]
			\centering
			\begin{tikzpicture}[smooth,>=Stealth,scale=0.46]
				\draw[thick,->] (-4,0)--(6,0);
				\draw[thick,->] (0,-4)--(0,4);
				\draw[blue,->] (-1,-3)--(-1,1);
				\draw[blue,->] (-1,3)--(2,3);
				\draw[blue,->] (3,-3)--(1,-3);
				\draw[blue] (-1,-3) node[black,below left]{\tiny $-\kappa-in$};
				\draw[blue] (-1,3) node[black,above left]{\tiny $-\kappa+in$};
				\draw[blue] (3,-3) node[black,below right]{\tiny $\sigma_2-in$};
				\draw[blue] (3,3) node[black,above right]{\tiny $\sigma_2+in$};
				\draw [color=blue] (-1,-3) rectangle (3,3);
			\end{tikzpicture}
		\end{figure}
	\end{minipage}%
	\centering
	\caption{Resolvent set for $\B$ in the complex plane with the parameter $\xi\in\Rd$}
	\label{fig3}
\end{figure}

3. For the first term, noticing that $(\lambda I -\B)^{-1} = (I-(\lambda I-\A)^{-1}K)^{-1}(\lambda I-\A)^{-1}$, we apply \eqref{eq41}\eqref{eq156} to get 
\begin{align*}
  &\quad\ \lim_{n\to\infty}\Big\|\int^{\sigma_2+in}_{-\kappa+in}e^{\lambda t}(\lambda I-\B)^{-1}f\,d\lambda\Big\|_{L^2}\\&\le e^{\sigma_2t}(\sigma_2+\kappa)\lim_{n\to\infty}\sup_{-\sigma_1\le\Re\lambda\le\sigma_2}\|(\lambda I-\A)^{-1}f\|_{L^2}\,d\lambda\\
 &\le Ce^{\sigma_2t}\limsup_{n\to\infty}\frac{1}{(1+|n|^2+|\xi|^2)^{s\delta/2}}\|f\|_{L^2}\\
  &=0,
\end{align*}for some $\delta>0$, since $f\in\S$. Similarly, the second term satisfies
\begin{align*}
  \lim_{n\to\infty}\int^{-\kappa-in}_{\sigma_2-in}e^{\lambda t}(\lambda I-\B)^{-1}f\,d\lambda=0,\ \ \text{ in }L^2.
\end{align*}

4. Now we investigate the third integral. 
Notice that $\partial^k_\tau((\lambda I-\B)^{-1}f) = k!(-i)^k(\lambda I-\B)^{-k-1}f$, for $\lambda\in\rho(\B)$.
Using integration by parts, we have, for $k\ge 2$, 
\begin{align*}
&\quad\ \int^{-\kappa+in}_{-\kappa-in}e^{\lambda t}(\lambda I-\B)^{-1}f\,d\lambda\\
&=\sum^k_{j=1}\frac{e^{(-\kappa+i\tau)t}(j-1)!}{it^j}(\lambda I-\B)^{-j}f\Big|^{\tau=n}_{\tau=-n}+\frac{k!}{t^k}\int^{-\kappa+in}_{-\kappa-in}e^{\lambda t}(\lambda I-\B)^{-k-1}\,d\lambda. 
\end{align*}
Here we can take larger $k$ to deduce higher regularity from Theorem \ref{Bbound} and higher-order time decay from integration by parts. 
By Theorem \ref{Bcontinuous} and \eqref{eq156}, we have 
\begin{align*}
&(\lambda I-\B)^{-j}f = \left((I-(\lambda I-\A)^{-1}K)^{-1}(\lambda I-\A)^{-1}\right)^{j}f,\\
\|&(\lambda I-\B)^{-j}f\|_{L^2} \lesssim \|(\lambda I-\A)^{-1}f\|_{L^2}\to 0,\ \text{ as } \tau\to\infty. 
\end{align*}
Thus, 
\begin{align*}
\lim_{n\to\infty}\int^{-\kappa+in}_{-\kappa-in}e^{\lambda t}(\lambda I-\B)^{-1}f\,d\lambda
&= \lim_{n\to\infty}\frac{k!}{t^k}\int^{-\kappa+in}_{-\kappa-in}e^{\lambda t}(\lambda I-\B)^{-k-1}\,d\lambda,
\end{align*}with the limit taken in $L^2$. 
For $k\ge 2$, $g\in L^2$ and $l,m\ge 0$, we have 
\begin{align}
  &\quad\ \Big|\Big(\int^{-\kappa+in}_{-\kappa-in}e^{\lambda t}(\lambda I-\B)^{-k-1}f\,d\lambda,\,g\Big)_{L^2_v}\Big|\notag\\
  &\le e^{-\kappa t}\int^{-\kappa+in}_{-\kappa-in}\big|\big((\lambda I-\B)^{-k}f,\,(\lambda I-\B^*)^{-1}g\big)_{L^2_v}\big|\,d\lambda\notag\\
  &\le e^{-\kappa t}\int^{-\kappa+in}_{-\kappa-in}\|(\lambda I-\B)^{-k}f\|_{H(a^{1/2})}\,\|(\lambda I-\B^*)^{-1}g\|_{H(a^{-1/2})}\,d\lambda\notag\\
  &\le e^{-\kappa t}
  \Big(\int^{-\kappa+in}_{-\kappa-in}\|(\lambda I-\B)^{-k}f\|^2_{H(a^{1/2})}\,d\lambda\Big)^{1/2}\notag\\
&\qquad\qquad\times
  \Big(\int^{-\kappa+in}_{-\kappa-in}\|(\lambda I-\B^*)^{-1}g\|^2_{H(a^{-1/2})}\,d\lambda\Big)^{1/2}.\label{eq187}
\end{align}
Denote $\varphi=((-\kappa+i\tau) I-\B)^{-1}f$. By Theorem \ref{Bbound}, for $\beta\in\R$, $k\ge 2$, we have 
\begin{align}\notag
  &\quad\ \int^{-\kappa+in}_{-\kappa-in}\|(\lambda I-\B)^{-k}f\|^2_{H(a^{1/2})}\,d\lambda\\
  &\lesssim \int^{-\kappa+in}_{-\kappa-in}\|(\lambda I-\B)^{-1}f\|^2_{H(a^{-1/2})}\,d\lambda\notag\\
  &\lesssim \int^{in}_{-in}\lim_{\varepsilon\to 0} \big(\|\varphi-\Phi_\varepsilon*\varphi\|_{H(a^{-1/2})}+\|\Phi_\varepsilon*\varphi\|_{H(a^{-1/2})}\big)^2\,d\tau.\label{eq83}
\end{align}where $\widehat\Phi(\eta):=\<\eta\>^{-d-1}$ and $\Phi_{\varepsilon}:=\varepsilon^{-d}\Phi(\frac{v}{\varepsilon})$. That is $\Phi*\varphi=\F^{-1}(\widehat\Phi\F\varphi)=\widehat\Phi(\varepsilon\nabla_v)\varphi$. 
On one hand, 
\begin{align*}
  \|\varphi-\Phi_\varepsilon*\varphi\|_{H(a^{-1/2})}
  &\lesssim \|\varphi-\Phi_\varepsilon*\varphi\|_{L^2}
  \to 0,\quad\text{as $\varepsilon\to 0$.}
\end{align*}On the other hand, 
\begin{align*}
  \|\Phi_\varepsilon*\varphi\|_{H(a^{-1/2})}
  &\lesssim \|(a^{-1/2})^w\widehat{\Phi}(\varepsilon\nabla_v)\varphi\|_{L^2}.
\end{align*}
Write
\begin{align*}
  \varphi = \frac{-A\varphi+K\varphi+f}{-\kappa+i\tau + 2\pi i\xi\cdot v}.
\end{align*}Then 
\begin{align*}
  \|(a^{-1/2})^w\widehat{\Phi}(\varepsilon\nabla_v)\varphi\|_{L^2}
  &=\|(a^{-1/2})^w\widehat{\Phi}(\varepsilon\nabla_v)\big(\frac{-A\varphi+K\varphi+f}{-\kappa+i\tau + 2\pi i\xi\cdot v}\big)\|_{L^2}.
\end{align*}
Since $\kappa>0$, we have $(-\kappa+i\tau + 2\pi i\xi\cdot v)^{-1} \in S\big((\kappa^2+(\tau + 2\pi \xi\cdot v)^2)^{-1/2}\big)\subset S(1)$,
  $-\kappa+i\tau + 2\pi i\xi\cdot v \in S\big((\kappa^2+(\tau + 2\pi \xi\cdot v)^2)^{1/2}\big)$ uniformly in $y$. Denoting $\Psi(v)=-\kappa+i\tau + 2\pi i\xi\cdot v$, we have
\begin{align*}
  &\quad\ \|(a^{-1/2})^w\widehat{\Phi}(\varepsilon\nabla_v)\Big(\frac{-A\varphi+K\varphi+f}{-\kappa+i\tau + 2\pi i\xi\cdot v}\Big)\|_{L^2}\\
  &=\|\Psi^{-1}\underbrace{\Psi(a^{-1/2})^w\widehat{\Phi}(\varepsilon\nabla_v)\Psi^{-1}((a^{-1/2})^w)^{-1}}_{\in Op(\widehat{\Phi}(\varepsilon\eta))}(a^{-1/2})^w(-A\varphi+K\varphi+f)\|_{L^2}\\
  &\lesssim \left\|\frac{\widehat{\Phi}(\varepsilon\nabla_v)(a^{-1/2})^w(-A\varphi+K\varphi+f)}{-\kappa+i\tau + 2\pi i\xi\cdot v}\right\|_{L^2_v}.
\end{align*}
Therefore \eqref{eq83} becomes 
\begin{align*}
  \int^{-\kappa+in}_{-\kappa-in}&\|(\lambda I-\B)^{-k}f\|^2_{H(a^{1/2})}\,d\lambda\\
  &\lesssim \int^{in}_{-in}\lim_{\varepsilon\to 0} \|\Phi_\varepsilon*\varphi\|^2_{H(a^{-1/2})}\,d\tau\\
&\lesssim \int^{in}_{-in}\lim_{\varepsilon\to 0}\Big\|\frac{\widehat{\Phi}(\varepsilon\nabla_v)(a^{-1/2})^w(-A\varphi+K\varphi+f)}{-\kappa+i\tau + 2\pi i\xi\cdot v}\Big\|^2_{L^2_v}\,d\tau\\
&\lesssim \liminf_{\varepsilon\to 0}\int_{\Rd}\int_{\R}\frac{\big|\widehat{\Phi}(\varepsilon\nabla_v)(a^{-1/2})^w(-A\varphi+K\varphi+f)(v)\big|^2}{\kappa^2+(\tau + 2\pi\xi\cdot v)^2}\,d\tau dv\\
&\lesssim \liminf_{\varepsilon\to 0}\int_{\R}\frac{\big\|\Phi_\varepsilon*((a^{-1/2})^w(-A\varphi_1+K\varphi_1+f))(v)\big\|_{L^2}^2}{\kappa^2+\tau^2}\,d\tau\\
&\lesssim \int_{\R}\frac{\big\|(a^{-1/2})^w(-A\varphi_1+K\varphi_1+f)\big\|_{L^2}^2}{\kappa^2+\tau^2}\,d\tau\\
&\lesssim \|f\|^2_{H(a^{-1/2})}.
\end{align*}by Fatou's lemma, dominated convergence theorem and Theorem \ref{Bbound}, where $A\in Op(a)$ and the constant is independent of $y$ and $\varphi_1 := (\sigma+i(\tau-2\pi v \cdot{\xi})-\B)^{-1}f$.

Similarly, $\B^*=\widehat{B}(-y)$ in the second integral of \eqref{eq187} satisfies the same estimate. 
Therefore,
\begin{align*}
  \Big|\Big(\int^{-\kappa+in}_{-\kappa-in}e^{\lambda t}(\lambda I-\B)^{-k-1}f\,d\lambda,\,g\Big)_{L^2}\Big|
&\lesssim e^{-\kappa t}\|f\|_{H(a^{-1/2})}\|g\|_{H(a^{-1/2})},\\
  \Big\|\int^{-\kappa+in}_{-\kappa-in}e^{\lambda t}(\lambda I-\B)^{-1}f\,d\lambda\Big\|_{\H}&\lesssim \frac{e^{-\kappa t}k!}{t^k}\|f\|_{H(a^{-1/2})},
\end{align*} for $f\in\S$ and hence for $f\in H(a^{-1/2})$ by density.

5. Since when $|\xi|$ is fixed, $\lambda_j(|\xi|)$ is isolated eigenvalue of $\B$, by Theorem XV.2.2 in \cite{Gohberg1990}, we have for $|\xi|\le\xi_0$, 
\begin{align*}
  \text{Res}\{e^{\lambda t}(\lambda-\B)^{-1}f:\lambda=\lambda_j(\xi)\}= e^{\lambda_j(\xi)t}P_{j}f,
\end{align*}where $P_{j}$ is the projection from $L^2$ into the eigenspace corresponding $\lambda_j(\xi)$. Recall the behavior of $\lambda_j(|\xi|)$ and $P_{j}$ in Theorem \ref{eigenstruture}, we can choose $\xi_0$ so small that $\Re\lambda_j(\xi)\le -C_0|\xi|^2$, for any $|\xi|\le\xi_0$ and some $C_0>0$. Thus, substituting the estimate in step three and four into \eqref{eq65}, we have 
\begin{align*}
\|e^{t\B}f\|_{\H} &\lesssim \frac{e^{-\kappa t}k!}{t^k}\|f\|_{H(a^{-1/2})} + \1_{|\xi|\le\xi_0}\sum^{d+1}_{j=0} |e^{\lambda_j(\xi)t}|\|P_{j}f\|_{H(a^{1/2})}\\
&\lesssim  \frac{e^{-\kappa t}C_k}{t^k}\|f\|_{H(a^{-1/2})} + \1_{|\xi|\le\xi_0}e^{-C_0|\xi|^2t}\|f\|_{H(a^{-1/2})}.
\end{align*}
Since $e^{tB}$, $e^{t\B}$ generate strongly continuous semigroup on $L^2_{x,v}$, $L^2_v$ respectively, $B = \F^{-1}_x\B\F_x$ on $L^2$, we have, by \eqref{Fouriersemigroup},
\begin{align*}
  e^{tB} = \F^{-1}_xe^{t\B}\F_x.
\end{align*}
Then by Hausdorff-Young's inequality and H\"older's inequality, for $f\in\S(\R^{2d})$, $p\in[1,2)$, $q\in(1,\infty)$ satisfying $\frac{1}{p}+\frac{1}{2q}=1$, we have that
\begin{align*}
  &\quad\ \|e^{tB}f\|^2_{H(a^{1/2})H^m_x}\\
  &= \int\<\xi\>^{2m}\int|(a^{1/2})^we^{t\B}\F_xf|^2\,dvd\xi\\
  &\lesssim\int\<\xi\>^{2m}\Big(\frac{e^{-2\kappa t}C_k}{t^{2k}}+\1_{|\xi|\le\xi_0}e^{-2C_0|\xi|^2t}\Big)\int|(a^{-1/2})^w\F_xf|^2\,dvd\xi\\
  &\lesssim\frac{e^{-2\kappa t}C_k}{t^{2k}}\|f\|^2_{H(a^{-1/2})H^m_x}\\&\qquad +\int\Big(\int|\1_{|\xi|\le\xi_0}e^{-C_0|\xi|^2t}|^{2\cdot\frac{q}{q-1}}\,d\xi\Big)^{\frac{q-1}{q}}
  \Big(\int|\F_x(a^{-1/2})^wf|^{2q}\,d\xi\Big)^\frac{1}{q}\,dv\\
  &\lesssim \frac{e^{-2\kappa t}C_k}{t^{2k}}\|f\|^2_{H(a^{-1/2})H^m_x}
  +\frac{C_p}{(1+t)^{d/2(2/p-1)}}\|(a^{-1/2})^wf\|^2_{L^2_v(L^p_x)},
\end{align*}where constant $C_p$ is uniformly bounded on $p\in[1,2)$.
\qe\end{proof}

\section{Global existence for hard potential}\label{sec5}
In this section, we will discuss the global existence to Boltzmann equation \eqref{eq00} for hard potential $\gamma+2s\ge0$. Here we introduce a norm $X$ similar to \cite{Gualdani2017} and deduce an energy estimate on this space. Also we will apply the estimate on $\Gamma$ from \cite{Gressman2011}.

\subsection{Estimate on nonlinear term $\Gamma$} By Theorem 2.1 in \cite{Gressman2011} and \eqref{equivalent_norm}, for $f,g\in\S$, $n\ge 0$, 
\begin{align}\label{eq87}
|(\Gamma(f,g),h)_{L^2_v}|&\le C \|f\|_{L^2_v}\|(a^{1/2})^wg\|_{L^2_v}\|(a^{1/2})^wh\|_{L^2_v}.
\end{align}
Note that $\F_x\Gamma(f,g)=\int\Gamma(\widehat{f}(\xi-\zeta),\widehat{g}(\zeta))\,d\zeta$, where $\widehat{f}=\F_xf$.
Hence, for $m\in\R$, 
\begin{align*}
&\quad\ \Big|\Big(\<D_x\>^m(a^{-1/2})^w\Gamma(f,g),\<D_x\>^m(a^{1/2})^wh\Big)_{L^2_{x,v}}\Big|\\
&= \Big|\Big(\<\xi\>^m(a^{-1/2})^w\int\Gamma(\widehat{f}(\xi-\zeta),\widehat{g}(\zeta))\,dz,\<\xi\>^m(a^{1/2})^w\widehat{h}\Big)_{L^2_{\xi,v}}\Big|\\
&\le  \int\int\<\xi\>^{2m}\Big|\Big((a^{-1/2})^w\Gamma(\widehat{f}(\xi-\zeta),\widehat{g}(\zeta)),(a^{1/2})^w\widehat{h}(\xi)\Big)_{L^2_{v}}\Big|\,d\zeta d\xi\\
&\le C_m\int\int\|\<\xi-\zeta\>^m\widehat{f}(\xi-\zeta)\|_{L^2_{v}}\|(a^{1/2})^w\widehat{g}(\zeta)\|_{L^2_{v}}\|\<\xi\>^m(a^{1/2})^w\widehat{h}(\xi)\|_{L^2_{v}}\,d\zeta d\xi\\
&\qquad+ C_m\int\int\|\widehat{f}(\xi-\zeta)\|_{L^2_{v}}\|\<z\>^m(a^{1/2})^w\widehat{g}(\zeta)\|_{L^2_{v}}\|\<\xi\>^m(a^{1/2})^w\widehat{h}(\xi)\|_{L^2_{v}}\,d\zeta d\xi\\
&\le C_m\|\<D_x\>^mf\|_{L^2_{x,v}}\big\|\|(a^{1/2})^w\widehat{g}(\xi)\|_{L^2_{v}}\big\|_{L^1_\xi}\|\<D_x\>^m(a^{1/2})^w\widehat{h}\|_{L^2_{x,v}}\\
&\qquad + C_m\big\|\|\widehat{f}(\xi)\|_{L^2_{v}}\big\|_{L^1_\xi}\|\<D_x\>^m(a^{1/2})^wg\|_{L^2_{x,v}}\|\<D_x\>^m(a^{1/2})^w{h}\|_{L^2_{x,v}},
\end{align*}by H\"{o}lder's inequality and Fubini's theorem since $f,g\in\S$. 
In particular, when $m>\frac{d}{2}$, we have $\|\widehat{g}\|_{L^1_x}\le \|\<\xi\>^{-m}\|_{L^2_\xi}\|\<\xi\>^m\widehat{g}\|_{L^2_\xi}\le C\|\<D_x\>^mg\|_{L^2_x}$ and hence, 
\begin{align}
\Big|\Big(\<D_x\>^m&(a^{-1/2})^w\Gamma(f,g),\<D_x\>^m(a^{1/2})^wh\Big)_{L^2_{x,v}}\Big|\notag\\
&\le\notag C_m\|\<D_x\>^mf\|_{L^2_{x,v}}\|\<D_x\>^m(a^{1/2})^wg\|_{L^2_{x,v}}\|\<D_x\>^m(a^{1/2})^wh\|_{L^2_{x,v}},\\
\Big\|\<D_x\>^m&(a^{-1/2})^w\Gamma(f,g)\Big\|_{L^2_{x,v}}\le C_m\|\<D_x\>^mf\|_{L^2_{x,v}}\|\<D_x\>^m(a^{1/2})^wg\|_{L^2_{x,v}}.\label{eq88}
\end{align}
On the other hand, applying H\"{o}lder's inequality on $x$ in \eqref{eq87}, 
\begin{align}
\Big((a^{-1/2})^w\Gamma(f,g),(a^{1/2})^wh\Big)_{L^2_{x,v}}
&\le\notag C\|f\|_{L^2_{x,v}}\|(a^{1/2})^wg\|_{L^2_{x,v}}\sup_{x\in\Rd}\|(a^{1/2})^wh\|_{L^2_{x,v}},\\
\big\|\|(a^{1/2})^w\Gamma(f,g)\|_{L^2_v}\big\|_{L^1_x}&\le C\|f\|_{L^2_{x,v}}\|(a^{1/2})^wg\|_{L^2_{x,v}}.\label{eq89}
\end{align}
Thus $\Gamma(f,g)$, initially defined on $\S(\Rd)\times\S(\Rd)$, can uniquely extend to a bilinear continuous operator on $L^2_vH^m_x\times \H H^m_x$ and $L^2_{x,v}\times \H L^2_x$.

\subsection{Estimate on space $X$}Let $\delta>0$ be a small constant chosen later. We define inner product 
\begin{align}\label{DefX}
(f,g)_X = \delta(f,g)_{L^2_vH^m_x} + \int^{\infty}_0(e^{\tau B}f,e^{\tau B}g)_{L^2_vH^m_x}\,d\tau,
\end{align}and the corresponding norm $\|\cdot\|^2_X:=(\cdot,\cdot)_X$.

For $m\in\R$, assume $f_0\in L^2_{v}H^m_x\cap D(B)$. By semigroup theory, the solution to 
\begin{align*}
f_t = Bf, \qquad f|_{t=0}=f_0, 
\end{align*}is $f = e^{tB}f_0\in D(B)$. Notice that $\<D_x\>$ commutes with $A$, thus by a similar argument in Theorem \ref{ASchwarz} (1), we can estimate the closure of $v\cdot\nabla_x+A$ on $L^2_{x,v}$ as following. 
\begin{align}
\Re((v\cdot\nabla_x+A)f,f)_{L^2_vH^{m}_x} &\ge \nu_0\|f\|_{\H H^m_x},\text{ for } f\in H(\<v\>\<\xi\>)\cap H(a),\notag\\
\Re((\overline{v\cdot\nabla_x+A})f,f)_{L^2_vH^{m}_x} &\ge \nu_0\|f\|_{\H H^m_x},\text{ for } f\in D(\overline{v\cdot\nabla_x+A})=D(B).\label{eq77}
\end{align}
Thus for $m\in\R$, $t_0>0$, 
\begin{align*}
\frac{1}{2}\frac{d}{dt}\|f\|^2_{L^2_vH^m_x} = \Re((\overline{-v\cdot\nabla_x-A}+K)f,f)_{L^2_vH^m_x} &\le -\nu_0\|f\|^2_{\H H^m_x} + C\|f\|^2_{L^2_vH^m_x},\\
\frac{1}{2}\sup_{0\le t\le t_0}\|f(t)\|^2_{L^2_vH^m_x}+\nu_0\int^{t_0}_0\|f(t)\|^2_{\H H^m_x}\,dt &\le \frac{1}{2}\|f(0)\|^2_{L^2_vH^m_x}+C\int^{t_0}_0\|f\|^2_{L^2_vH^m_x}\,dt\\
&\le \frac{1}{2}\|f_0\|^2_{L^2_vH^m_x}+Ct_0\sup_{0\le t\le t_0}\|f\|^2_{L^2_vH^m_x},\\
\frac{1}{4}\sup_{0\le t\le t_0}\|e^{tB}f_0\|^2_{L^2_vH^m_x}+\nu_0\int^{t_0}_0\|e^{tB}f_0\|^2_{\H H^m_x}\,dt 
&\le \frac{1}{2}\|f_0\|^2_{L^2_vH^m_x}.
\end{align*}if we choose $t_0=\frac{1}{4C}$. 
The estimate on the sun dual semigroup $(e^{tB})^{\odot}$ of $e^{tB}$ satisfies the same estimate, cf. II.2.6 in \cite{Engel1999}, since $B^*=(-v\cdot\nabla_x+L)^*=\overline{v\cdot\nabla_x+L}$. Therefore
\begin{align}\notag
\int^{t_0}_0\|f(t)\|^2_{L^2_vH^m_x}\,dt
&=\int^{t_0}_0\|e^{t B}f_0\|^2_{L^2_vH^m_x}\,dt\\
&=\int^{t_0}_0\lim_{n\to\infty}|(e^{t B}f_0,g_n)_{L^2_vL^2_x}|^2\,dt\notag\\
&\le\liminf_{n\to\infty}\|f_0\|^2_{H(a^{-1/2})H^m_x}\int^{t_0}_0\|e^{t B^*}g_n\|_{\H H^{-m}_x}^2\,dt\notag\\
&\le\frac{1}{2\nu_0}\liminf_{n\to\infty}\|f_0\|^2_{H(a^{-1/2})H^m_x}\|g_n\|_{L^2_v H^{-m}_x}^2\notag\\
&\le\frac{1}{2\nu_0}\|f_0\|^2_{H(a^{-1/2})H^m_x},\label{eqq4}
\end{align}for some sequence $\{g_n\}\subset\S$ with $\|g_n\|_{L^2_vH^{-m}_x}=1$. 
For large time $t\ge t_0$, we apply Theorem \eqref{Thm1} to $f=e^{tB}f_0$ with $k=2$, then 
\begin{align*}
&\quad\ \int^\infty_{t_0}\|e^{tB}f_0\|^2_{H(a^{1/2})H^m_x}\,dt\\
  &\le \int^\infty_{t_0}\Big(\frac{e^{-2\kappa t}C}{t^{4}}\|(a^{-1/2})^wf_0\|^2_{L^2_vH^m_x}
  +\frac{C}{(1+t)^{d/2(2/p-1)}}\|(a^{-1/2})^wf_0\|^2_{L^2_v(L^p_x)}\Big)\,dt\\
  &\le C\Big(\|(a^{-1/2})^wf_0\|^2_{L^2_vH^m_x}+\|(a^{-1/2})^wf_0\|^2_{L^2_v(L^p_x)}\Big),
\end{align*}for $d\ge 3$ and $\frac{d}{2}(\frac{2}{p}-1)>1$. Together with \eqref{eqq4}, we have 
\begin{align}\label{eqq1}
\int^\infty_{0}\|e^{tB}f_0\|^2_{L^2_vH^m_x}\,dt&\le C\Big(\|(a^{-1/2})^wf_0\|^2_{L^2_vH^m_x}+\|(a^{-1/2})^wf_0\|^2_{L^2_v(L^p_x)}\Big).
\end{align}

\subsection{Proof of Theorem \ref{Thm2}}
Combining the above two sections, we can prove the global existence to 
\begin{align}\label{eq99}
f_t = Bf + \Gamma(f,f),\quad f|_{t=0}=f_0,
\end{align}with $f\in X$.
For notational convenience during the proof, we define a total norm 
\begin{align}\label{Defg}
\g(f(t)) := \delta\|f(t)\|^2_{L^2_vH^m_x}+\int^\infty_0\|e^{\tau B}f(t)\|^2_{L^2_vH^m_x}\,d\tau+2\delta\nu_0\int^t_0\|f(\tau)\|^2_{\H H^m_x}\,d\tau.
\end{align}
\begin{Thm}\label{Thmlinear}Let $d\ge 3$, $m>\frac{d}{2}$. There exists $\varepsilon_0>0$ such that if
\begin{align*}
\|f(0)\|^2_{X}\le \varepsilon^2_0\ \text{ and }\ \sup_{0\le t<\infty}\g(g)\le 2\varepsilon^2_0,
\end{align*}  then the solution $f$ to linear equation
\begin{align}\label{linearequation}
f_t = Bf + \Gamma(g,f),\quad f|_{t=0}=f_0, 
\end{align}
is well defined and satisfies
\begin{align}
\label{eq91}\sup_{0\le t< \infty}\g(f) \le 2\varepsilon^2_0.
\end{align}
\end{Thm}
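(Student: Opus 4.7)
The plan is to derive an a priori identity for $\tfrac{d}{dt}\g(f(t))$ whose right-hand side can be absorbed under the smallness assumptions on $\|f_0\|_X$ and $\sup_t\g(g)$, and then to build $f$ itself by Duhamel iteration whose contraction in the $\g$-norm follows from the same estimates applied to successive differences of $f^{n+1}(t)=e^{tB}f_0+\int_0^t e^{(t-\tau)B}\Gamma(g(\tau),f^n(\tau))\,d\tau$.

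First I would differentiate each of the three pieces of $\g(f(t))$. Pairing \eqref{linearequation} with $f$ in $L^2_vH^m_x$, the skew-adjointness of $v\cdot\nabla_x$, the dissipation \eqref{A_positive} on $A$, and the $L^2_vH^m_x$-boundedness of $K=P\<v\>^{\gamma+2s}P$ give $\Re(Bf,f)_{L^2_vH^m_x}\le -\nu_0\|f\|^2_{\H H^m_x}+C\|f\|^2_{L^2_vH^m_x}$; together with \eqref{eq88} this yields
$$\tfrac{\delta}{2}\tfrac{d}{dt}\|f\|^2_{L^2_vH^m_x}\le -\delta\nu_0\|f\|^2_{\H H^m_x}+\delta C\|f\|^2_{L^2_vH^m_x}+\delta C\|g\|_{L^2_vH^m_x}\|f\|^2_{\H H^m_x}.$$
For $I(t):=\int_0^\infty\|e^{\tau B}f(t)\|^2_{L^2_vH^m_x}\,d\tau$, commuting $\partial_t$ inside the integral and recognising $\partial_\tau\|e^{\tau B}f\|^2=2\Re(Be^{\tau B}f,e^{\tau B}f)$ produces
$$\tfrac{d}{dt}I(t)=-\|f(t)\|^2_{L^2_vH^m_x}+2\int_0^\infty\Re\bigl(e^{\tau B}\Gamma(g,f),\,e^{\tau B}f\bigr)_{L^2_vH^m_x}\,d\tau,$$
where the boundary term at $\tau=\infty$ is extracted along a subsequence $\tau_n\to\infty$ furnished by the integrability $I(t)<\infty$, combined with the decay in Theorem \ref{Thm1}. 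The third piece of $\g$ contributes $+2\delta\nu_0\|f\|^2_{\H H^m_x}$.

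Summing these three derivatives, the dissipation $-2\delta\nu_0\|f\|^2_{\H H^m_x}$ cancels exactly against $+2\delta\nu_0\|f\|^2_{\H H^m_x}$, and fixing $\delta<1/(2C)$ renders $(2\delta C-1)\|f\|^2_{L^2_vH^m_x}$ non-positive and discardable. This leaves
$$\tfrac{d}{dt}\g(f)\le 2\delta C\,\|g\|_{L^2_vH^m_x}\|f\|^2_{\H H^m_x}+2\int_0^\infty\Re\bigl(e^{\tau B}\Gamma(g,f),\,e^{\tau B}f\bigr)_{L^2_vH^m_x}\,d\tau.$$
For the cross integral I apply Cauchy--Schwarz in $\tau$ and \eqref{eqq1} with $p=1$ (admissible because $d\ge 3$ and $1<\tfrac{2d}{d+2}$) to $h=\Gamma(g,f)$; \eqref{eq88} bounds the $L^2_vH^m_x$ ingredient, while \eqref{eq89} bounds the $L^2_vL^1_x$ ingredient (using $m>\tfrac{d}{2}$ to dualise between $L^\infty_x$ and $L^1_x$). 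Both yield $\le C\|g\|_{L^2_vH^m_x}\|f\|_{\H H^m_x}$, so the cross integral is at most $2C\|g\|_{L^2_vH^m_x}\|f\|_{\H H^m_x}\sqrt{\g(f)}$.

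Integrating over $t\in[0,T]$ and inserting the bootstrap hypothesis $\sup\g(f)\le 2\varepsilon_0^2$, the first term is controlled by $\sup_t\|g\|_{L^2_vH^m_x}\le\sqrt{2/\delta}\,\varepsilon_0$ together with $\int_0^T\|f\|^2_{\H H^m_x}\le\g(f(T))/(2\delta\nu_0)$, giving $C(\delta)\varepsilon_0\g(f(T))$; the cross integral is treated by Cauchy--Schwarz in $t$, using $\|g\|_{L^2_vH^m_x}\le C\|g\|_{\H H^m_x}$ (valid since $a\ge K_0$ for $\gamma+2s\ge 0$) and $\int_0^T\|g\|^2_{\H H^m_x}\le\g(g)/(2\delta\nu_0)$, giving $C'(\delta)\varepsilon_0^2\sqrt{\g(f(T))}$. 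Setting $y:=\sqrt{\g(f(T))}$, the result is a quadratic inequality $y^2\le\varepsilon_0^2+C_1\varepsilon_0 y^2+C_2\varepsilon_0^2 y$ that closes to $y\le\sqrt{2}\,\varepsilon_0$ once $\varepsilon_0$ is small, yielding \eqref{eq91} by a standard continuity/bootstrap argument. The main obstacle is the cross integral: handling it requires simultaneously the $L^p_x$ piece of the regularizing estimate \eqref{eqq1} and the $L^1_x$ bilinear bound \eqref{eq89}, which is precisely where the hypotheses $d\ge 3$ and $m>\tfrac{d}{2}$ become essential.
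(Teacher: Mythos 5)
Your proof follows essentially the same route as the paper's: differentiate the weighted energy $\g(f)$ in $t$, use the coercivity \eqref{eq77} of $\overline{v\cdot\nabla_x+A}$ in $L^2_vH^m_x$ together with the boundary cancellation in $\int_0^\infty\partial_\tau\|e^{\tau B}f\|^2_{L^2_vH^m_x}\,d\tau$, estimate the two $\Gamma$ contributions by \eqref{eq88}, \eqref{eq89} and the regularizing bound \eqref{eqq1} with $p=1$, and then close a Gronwall-type inequality under the smallness of $\g(g)$ and $\|f_0\|^2_X$. The only divergence is that you sketch constructing the solution by Duhamel iteration whereas the paper invokes a vanishing-viscosity/parabolic regularization, but both are only indicated and the a priori estimate is the substance of the theorem.
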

\begin{proof}
A rather standard procedure (adding the vanishing term $\varepsilon\<D_{x,v}\>^{M}(1+|v|^2+|x|^2)^{2M}\<D_{x,v}\>^{M}f$ for some large $M$ and standard parabolic equation theory with mollified initial data, cf. \cite{MichaelRenardy2004}) gives the existence of solution to \eqref{linearequation}. So we will focus on the proof of \eqref{eq91}. 
Let $f=f(t)$ be a local solution to \eqref{linearequation}, then we have 
\begin{align*}
\frac{1}{2}\frac{d}{dt}\|f\|^2_{X}
&=\frac{\delta}{2}\frac{d}{dt}\|f\|^2_{L^2_vH^m_x}+ \frac{1}{2}\frac{d}{dt}\int^\infty_0\|e^{\tau B}f\|^2_{L^2_vH^m_x}\,d\tau\\
&=\delta\,\Re(Bf,f)_{L^2_vH^m_x}+\int^\infty_0\Re(e^{\tau B}Bf,e^{\tau B}f)_{L^2_vH^m_x}\,d\tau\\
&\qquad+\delta\,\Re(\Gamma(g,f),f)_{L^2_vH^m_x}+\int^\infty_0\Re(e^{\tau B}\Gamma(g,f),e^{\tau B}f)_{L^2_vH^m_x}\,d\tau\\
&=: I_1+I_2+I_3+I_4. 
\end{align*}
For $I_1$, $I_2$, we use Theorem \ref{Thm0}, $\frac{d}{d\tau}e^{\tau B}f=e^{\tau B}Bf$ and \eqref{eq77} to obtain 
\begin{align*}
I_1+I_2&\le -\delta\nu_0\|f\|^2_{\H H^m_x} +C\delta\|f\|^2_{L^2_vH^m_x}+\int^\infty_0\frac{1}{2}\frac{d}{d\tau} \|e^{\tau B}f\|^2_{L^2_vH^m_x}\,d\tau\\
&\le  -\delta\nu_0\|f\|^2_{\H H^m_x} +C\delta\|f\|^2_{L^2_vH^m_x}-\frac{1}{2}{\|f\|^2_{L^2_vH^m_x}}\\
&\le -\delta\nu_0\|f\|^2_{\H H^m_x}-\frac{1}{4}\|f\|^2_{L^2_vH^m_x},
\end{align*}where we choose $\delta=\frac{1}{4C}$. For $I_3$, $I_4$, we apply \eqref{eq88} to get 
\begin{align*}
I_3&\le C\delta\|(a^{-1/2})^w\Gamma(g,f)\|_{L^2_v H^m_x}\|f\|_{\H H^m_x}\\
&\le C\delta\|g\|_{L^2_vH^m_x}\|f\|^2_{\H H^m_x}.
\end{align*}
At last, we apply \eqref{eq88}\eqref{eq89}\eqref{eqq1} to $I_4$, 
\begin{align*}
I_4&\le\int^\infty_0\|e^{\tau B}\Gamma(g,f)\|_{L^2_vH^m_x}\|e^{\tau B}f\|_{L^2_vH^m_x}\,d\tau\\
&\le C\Big(\int^\infty_0\|e^{\tau B}\Gamma(g,f)\|^2_{L^2_vH^m_x}\,d\tau\Big)^{1/2}
\Big(\int^\infty_0\|e^{\tau B}f\|^2_{L^2_vH^m_x}\,d\tau\Big)^{1/2}\\
&\le C\Big(\|(a^{-1/2})^w\Gamma(g,f)\|^2_{L^2_vH^m_x}+\|(a^{-1/2})^w\Gamma(g,f)\|^2_{L^2_v(L^1_x)}\Big)^{1/2}
\Big(\int^\infty_0\|e^{\tau B}f\|^2_{L^2_vH^m_x}\,d\tau\Big)^{1/2}\\
&\le C\|g\|_{L^2_vH^m_x}\|f\|_{\H H^m_x}
\Big(\int^\infty_0\|e^{\tau B}f\|^2_{L^2_vH^m_x}\,d\tau\Big)^{1/2}
\end{align*}
As a conclusion, 
\begin{align*}
\delta\,&\frac{d}{dt}\|f\|^2_{L^2_vH^m_x}+ \frac{d}{dt}\int^\infty_0\|e^{\tau B}f\|^2_{L^2_vH^m_x}\,d\tau+2\delta\nu_0\|f\|^2_{\H H^m_x}\\
&\le C\|g\|_{L^2_vH^m_x}\|f\|^2_{\H H^m_x}+C\|g\|_{L^2_vH^m_x}\|f\|_{\H H^m_x}
\Big(\int^\infty_0\|e^{\tau B}f\|^2_{L^2_vH^m_x}\,d\tau\Big)^{1/2}.
\end{align*}
Then taking the integral on $t$, thanks to \eqref{Defg}, we have
\begin{align*}
\sup_{0\le t< \infty}\g(f)
&\le \|f(0)\|^2_{X}+C\sup_{0\le t< \infty}\|g\|_{L^2_vH^m_x}\int^\infty_0\|f(t)\|^2_{\H H^m_x}\,dt\\
&\quad+C\int^\infty_0\|g(t)\|_{\H H^m_x}\|f(t)\|_{\H H^m_x}\,dt
\sup_{0\le t< \infty}\Big(\int^\infty_0\|e^{\tau B}f\|_{L^2_vH^m_x}\,d\tau\Big)^{1/2}\\
&\le \|f(0)\|^2_{X}+ \frac{C'}{\sqrt{2}}\sup_{0\le t< \infty}\g(g)^{1/2}\g(f).
\end{align*}since $\|\cdot\|_{L^2_v}\le C\|\cdot\|_{\H}$ for hard potential. 
Thus if $\g(g)\le 2\varepsilon^2_0$ and $\|f(0)\|^2_{X}\le \varepsilon^2_0$ with $\varepsilon_0\in(0,\frac{1}{2C'})$, we have 
\begin{align*}
\sup_{0\le t< \infty}\g(f)&\le \|f(0)\|^2_{X} + C'\sup_{0\le t< \infty}\varepsilon_0\g(f),\\
\sup_{0\le t< \infty}\g(f)&\le 2\|f(0)\|^2_{X} \le 2\varepsilon^2_0.
\end{align*}
\qe\end{proof}

After obtaining the uniform energy estimate, we can apply a standard iteration to prove our global existence result.

\begin{proof}[Proof of Theorem \ref{Thm2}]
1. 
Let $f^0=0$ and $f^{n+1}$ $(n\ge 0)$ be the solutions to 
\begin{align*}
f^{n+1}_t = Bf^{n+1} + \Gamma(f^n,f^{n+1}),\quad f^{n+1}|_{t=0}=f_0.
\end{align*}
Then $d^n=f^{n+1}-f^n$ $(n\ge 1)$ solves 
\begin{align*}
d^n_t = Bd^n + \Gamma(f^n,d^n)+\Gamma(d^{n-1},f^{n}),\quad d^{n+1}|_{t=0}=0,
\end{align*}while $d^0=f^1$ satisfies $\g(d^0)\le 2\varepsilon_0^2$. Next we will assume $\g(d^{n-1})\le (2C'\varepsilon_0)^{2n}$ $(n\ge 1)$ for some constant $C'$ found at \eqref{eq90}, then similar to the proof in Theorem \ref{Thmlinear}, we have 
\begin{align*}
\frac{1}{2}\frac{d}{dt}\|d^n\|^2_{X}
&=\frac{\delta}{2}\frac{d}{dt}\|d^n\|^2_{L^2_vH^m_x}+ \frac{1}{2}\frac{d}{dt}\int^\infty_0\|e^{\tau B}d^n\|^2_{L^2_vH^m_x}\,d\tau\\
&=\delta\Re(Bd^n,d^n)_{L^2_vH^m_x}+\int^\infty_0\Re(e^{\tau B}Bd^n,e^{\tau B}d^n)_{L^2_vH^m_x}\,d\tau\\
&\quad+\delta(\Gamma(f^n,d^n)+\Gamma(d^{n-1},f^{n})),d^n)_{L^2_vH^m_x}\\
&\quad+\int^\infty_0\Re(e^{\tau B}\Gamma(f^n,d^n)+\Gamma(d^{n-1},f^{n})),e^{\tau B}d^n)_{L^2_vH^m_x}\,d\tau\\
&=:I_1+I_2+I_3+I_4.
\end{align*}
As in Theorem \ref{Thmlinear}, we have \begin{align*}
I_1+I_2&\le -\delta\nu_0\|d^n\|^2_{\H H^m_x} + {C\delta\|d^n\|^2_{L^2_vH^m_x}}+\int^\infty_0\frac{1}{2}\frac{d}{d\tau} \|e^{\tau B}d^n\|^2_{L^2_vH^m_x}\,d\tau\\
&\le -\delta\nu_0\|d^n\|^2_{\H H^m_x} - {\frac{1}{4}\|d^n\|_{L^2_vH^m_x}},\end{align*}\begin{align*}
I_3&\le C\delta\big(\|f^n\|_{L^2_vH^m_x}\|d^n\|_{\H H^m_x}
+\|d^{n-1}\|_{L^2_vH^m_x}\|f^n\|_{\H H^m_x}\big)\|d^n\|_{\H H^m_x},
\end{align*}\begin{align*}
I_4&\le \int^\infty_0\|e^{\tau B}\big(\Gamma(f^n,d^n)+\Gamma(d^{n-1},f^{n}))\big)\|_{L^2_vH^m_x}\|e^{\tau B}d^n\|_{L^2_vH^m_x}\,d\tau\\
&\le C\big(\|f^n\|_{L^2_vH^m_x}\|d^n\|_{\H H^m_x}
+\|d^{n-1}\|_{L^2_vH^m_x}\|f^n\|_{\H H^m_x}\big)\Big(\int^\infty_0\|e^{\tau B}d^n\|^2_{L^2_vH^m_x}\,d\tau\Big)^{\frac{1}{2}},
\end{align*}where $\delta=\frac{1}{4C}$ as in Theorem \ref{Thmlinear}. Thus 
\begin{align*}
&\quad\ \frac{1}{2}\frac{d}{dt}\|d^n\|^2_{X}+\delta\nu_0\|d^n\|^2_{\H H^m_x}\\
&\le C\delta\big(\|f^n\|_{L^2_vH^m_x}\|d^n\|_{\H H^m_x}
+\|d^{n-1}\|_{L^2_vH^m_x}\|f^n\|_{\H H^m_x}\big)\|d^n\|_{\H H^m_x}\\
&\quad+C\big(\|f^n\|_{L^2_vH^m_x}\|d^n\|_{\H H^m_x}
+\|d^{n-1}\|_{L^2_vH^m_x}\|f^n\|_{\H H^m_x}\big)\g(d^n)
\end{align*}Taking integral on $t$ and using the uniform energy bound in Theorem \ref{linearequation}, we deduce that
\begin{align}
&\quad\sup_{0\le t<\infty}\g(d^n)\notag\\
&\le  C'\sup_{0\le t<\infty}\|f^n\|_{L^2_vH^m_x}\int^\infty_0\|d^n\|^2_{\H H^m_x}\,dt\notag\\
&\quad+\sup_{0\le t<\infty}\|d^{n-1}\|_{L^2_vH^m_x}\int^\infty_0\|f^n\|_{\H H^m_x}\|d^n\|_{\H H^m_x}\,dt\notag\\
&\quad+C\int^\infty_0\big(\|f^n\|_{\H H^m_x}\|d^n\|_{\H H^m_x}
+\|d^{n-1}\|_{L^2_vH^m_x}\|f^n\|_{\H H^m_x}\big)\,dt\g(d^n)^{1/2}\notag\\
&\le C'\sup_{0\le t<\infty}\big(\g(f^n)^{1/2}\g(d^n) + \g(d^{n-1})^{1/2}\g(f^n)^{1/2}\g(d^n)^{1/2}).\label{eq90}
\end{align}where $C'$ is independent of $n$. Thus, when $\g(d^{n-1})\le (2C'\varepsilon_0)^{2n}$, using \eqref{eq91}, we have 
\begin{align*}
\sup_{0\le t<\infty}\g(d^n)&\le C'\big( \sqrt{2}\varepsilon_0\,\sup_{0\le t<\infty}\g(d^n) + (2C'\varepsilon_0)^n\sqrt{2}\varepsilon_0\sup_{0\le t<\infty}\g(d^n)^{1/2}\big),\\
\sup_{0\le t<\infty}\g(d^n)&\le (2C'\varepsilon_0)^{2(n+1)},
\end{align*}where we choose $\varepsilon_0$ such that $C'\sqrt{2}\varepsilon_0\le1-\frac{1}{\sqrt{2}}$. Recalling the energy \eqref{Defg} and choose $\varepsilon_0$ sufficiently small, the sequence $\{f^n\}_{n\in\N}$ is a Cauchy sequence in $L^\infty([0,\infty);L^2_vH^m_x)$ and $L^2([0,\infty);\H H^m_x)$. Hence its limit $f$ solves \eqref{eq99} in the weak sense. We then deduce that 
\begin{align}\label{eq92}
\|f\|_{L^\infty([0,\infty);L^2_vH^m_x)}+\|f\|_{L^2([0,\infty);\H H^m_x)}\le C''\varepsilon_0,
\end{align}by passing the limit $n\to\infty$. 

2. To prove the uniqueness, we suppose that there exists another solution $g$ with the same initial data satisfying \eqref{eq92}. Then the difference $f-g$ satisfies 
\begin{align*}
\partial_t(f-g)=B(f-g) + \Gamma(f,f-g)+\Gamma(f-g,g),
\end{align*}in the weak sense. 
Then by \eqref{eq89}\eqref{eq77}, for $T>0$, 
\begin{align*}
&\quad\ \frac{1}{2}\frac{d}{dt}\|f-g\|^2_{L^2_vH^m_x}\\ &= \Re(B(f-g),f-g)_{L^2_vH^m_x}+\Re(\Gamma(f,f-g)+\Gamma(f-g,g),f-g)_{L^2_vH^m_x}\\
&\le -\nu_0\|f-g\|^2_{\H H^m_x}+C\|f-g\|^2_{L^2_vH^m_x}+C\|f\|_{L^2_vH^m_x}\|f-g\|^2_{\H H^m_x}\\&\quad+C\|f-g\|_{L^2_vH^m_x}\|g\|_{\H H^m_x}\|f-g\|_{\H H^m_x}.
\end{align*}
Taking integral on $t\in[0,T]$,\begin{align*}
&\quad\sup_{0\le t\le T}\|f-g\|^2_{L^2_vH^m_x}+2\nu_0\int^T_0\|f-g\|^2_{\H H^m_x}\,dt\\&\le C\int^T_0\|f-g\|^2_{L^2_vH^m_x}\,dt + C\sup_{0\le t\le T}\|f\|_{L^2_vH^m_x}\int^T_0\|f-g\|^2_{\H H^m_x}\,dt \\&\quad+ C\sup_{0\le t\le T}\|f-g\|_{L^2_vH^m_x}\int^T_0\|g\|_{\H H^m_x}\|f-g\|_{\H H^m_x}\,dt\\
&\le C\int^T_0\|f-g\|^2_{L^2_vH^m_x}\,dt + C''\varepsilon_0\int^T_0\|f-g\|^2_{\H H^m_x}\,dt \\&\quad+ C''\varepsilon_0\sup_{0\le t\le T}\|f-g\|_{L^2_vH^m_x}\Big(\int^T_0\|f-g\|^2_{\H H^m_x}\,dt\Big)^{1/2},
\end{align*}and when $\varepsilon_0>0$ is sufficiently small, we can deduce the uniqueness by Gronwall's inequality. 
\qe\end{proof}

\medskip

{\bf Acknowledgment} The author would like to thank Professor Yang Tong and the
colleagues and reviewers for their valuable comments on this paper. 
D.-Q Deng was supported by Direct Grant from BIMSA and YMSC. 
Data sharing not applicable to this article as no datasets were generated or analysed during the current study. 

\section{Appendix}\label{sec6}

\subsection{Operator theory}
For operator theory, one may refer to \cite{Pedersen2001, Schmuedgen2012, Gohberg1990}.
\begin{Def}\label{closure}
Let $(A,D(A))$ be a closable linear operator on Banach space $X$. Let $
G(A) := \{(x,Ax)|x\in D(A)\}$ and 
\begin{align*}
D(\overline{A}):=\{x\in X:\exists\, y\in X\, s.t.\, (x,y)\in \overline{G(A)}\}.
\end{align*}
Denote $\overline{A}$ maps $x\in D(\overline{A})$ to the corresponding $y$. Such $\overline{A}$ is well-defined and is called the closure of $A$. Then $G(\overline{A})=\overline{G(A)}$. 
\end{Def}
\begin{Def}\label{Defadjoint}
	Let $(A,D(A))$ be a linear unbounded densely defined operator from Hilbert space $H_1$ into Hilbert space $H_2$ with domain $D(A)$. Define 
	\begin{align*}
	D(A^*) = \Big\{y\in H_2\big|x\mapsto\<Ax,y\> \text{ is continuous from $H_1$ to $\C$} \Big\}.
	\end{align*}
	Take $y\in D(A^*)$, since $\overline{D(A)}=H_1$, the functional $F_y(x):=\<Ax,y\>$ has a unique bounded extension on $H_1$. Hence Riesz representation theorem ensures the existence of a unique $z\in H_1$ such that $F_y(x) = \<x,z\>$ for $x\in H_1$. Define $A^*y:=z$, then 
	\begin{align*}
	\<Ax,y\> = \<x,A^*y\>.
	\end{align*}The operator $A^*(H_2\to H_1)$ is linear and is called the adjoint of $A$. Then $(\overline{A})^*=A^*$ and  $(A^*)^{-1}=({A}^{-1})^*$ if ${A}^{-1}$ exists.
\end{Def}

\begin{Lem}[\cite{Lax2002}, Theorem 17.2]\label{operator_inverse}
  Let $T$,$S$ be any operator in $\mathscr{L}(L^2)$ such that $S$ is invertible and $\|T\|<\frac{1}{\|S^{-1}\|}$, then
  \begin{align*}
    (S-T)^{-1} = \sum^\infty_{n=0}(S^{-1}T)^nS^{-1},
  \end{align*}and hence $
    \|(S-T)^{-1}\|\le (1-\|S^{-1}\|\|T\|)\|S^{-1}\|.$
\end{Lem}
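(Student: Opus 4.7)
The plan is to recognize this as the standard Neumann series argument. The key algebraic manipulation is the factorization
\begin{align*}
S - T = S\bigl(I - S^{-1}T\bigr),
\end{align*}
which reduces the question to inverting $I - S^{-1}T$ on $L^2$. Since $\|S^{-1}T\| \le \|S^{-1}\|\,\|T\| < 1$ by hypothesis, one is in the classical regime where the geometric series in operators converges.

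First I would show that the partial sums $U_N := \sum_{n=0}^N (S^{-1}T)^n$ form a Cauchy sequence in $\mathscr{L}(L^2)$: indeed $\|U_{N+k} - U_N\| \le \sum_{n=N+1}^{N+k}\|S^{-1}T\|^n$, which is the tail of a convergent geometric series. By completeness of $\mathscr{L}(L^2)$ the sequence has a limit $U \in \mathscr{L}(L^2)$. Next I would verify, using continuity of multiplication on either side by the bounded operator $S^{-1}T$ and passing to the limit in $(I - S^{-1}T)U_N = I - (S^{-1}T)^{N+1}$, that $(I - S^{-1}T)U = U(I - S^{-1}T) = I$; this identifies $U = (I - S^{-1}T)^{-1}$. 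Multiplying on the right by $S^{-1}$ and using $S(I - S^{-1}T) = S - T$ gives the stated series representation
\begin{align*}
(S - T)^{-1} = \sum_{n=0}^{\infty} (S^{-1}T)^n S^{-1}.
\end{align*}

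Finally, the operator-norm estimate follows by taking norms term by term inside the series and summing the geometric series, using submultiplicativity of $\|\cdot\|_{\mathscr{L}(L^2)}$ to bound $\|(S^{-1}T)^n S^{-1}\| \le \|S^{-1}\|^{n+1}\|T\|^n$. There is essentially no obstacle here; the only thing to be careful about is that multiplication in $\mathscr{L}(L^2)$ is jointly continuous only when restricted to bounded factors, which is exactly the situation we are in, so passage to the limit in the algebraic identity for $U_N$ is legitimate.
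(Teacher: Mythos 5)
The paper does not actually supply a proof of this lemma---it is quoted directly from Lax's text with a citation---so there is no in-paper argument to compare against. Your Neumann-series argument is the standard (and correct) one: factoring $S-T=S(I-S^{-1}T)$, observing $\|S^{-1}T\|<1$, and summing the geometric series in $\mathscr{L}(L^2)$ establishes the stated series representation. One caveat: the norm bound as printed in the lemma, $\|(S-T)^{-1}\|\le (1-\|S^{-1}\|\|T\|)\|S^{-1}\|$, cannot be correct---it is strictly smaller than $\|S^{-1}\|$ whenever $T\neq 0$, yet $(S-T)^{-1}$ can certainly have norm exceeding $\|S^{-1}\|$. Your final step, summing $\sum_{n\ge 0}\|S^{-1}\|^{n+1}\|T\|^n$, actually yields the correct estimate $\|(S-T)^{-1}\|\le \|S^{-1}\|/(1-\|S^{-1}\|\|T\|)$; the factor in the lemma should read $(1-\|S^{-1}\|\|T\|)^{-1}$, and your proof establishes exactly that corrected bound.
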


 \subsection{Pseudo-differential calculus}\label{PD}

   We recall some notation and theorem of pseudo differential calculus. For details, one may refer to Chapter 2 in the book \cite{Lerner2010}, Proposition 1.1 in \cite{Bony1998-1999} and \cite{Beals1981,Bony1994}. Set $\Gamma=|dv|^2+|d\eta|^2$, but also note that the following are also valid for general admissible metric.
   Let $M$ be an $\Gamma$-admissible weight function. That is, $M:\R^{2d}\to (0,+\infty)$ satisfies the following conditions:\\
   (a). (slowly varying) there exists $\delta>0$ such that for any $X,Y\in\R^{2d}$, $|X-Y|\le \delta$ implies
   \begin{align*}
     M(X)\approx M(Y);
   \end{align*}
   (b) (temperance) there exists $C>0$, $N\in\R$, such that for $X,Y\in \R^{2d}$,
   \begin{align*}
     \frac{M(X)}{M(Y)}\le C\<X-Y\>^N.
   \end{align*}
   A direct result is that if $M_1,M_2$ are two $\Gamma$-admissible weight, then so is $M_1+M_2$ and $M_1M_2$. Consider symbols $a(v,\eta,\xi)$ as a function of $(v,\eta)$ with parameters $\xi$. We say that
   $a\in S(\Gamma)=S(M,\Gamma)$ uniformly in $\xi$, if for $\alpha,\beta\in \N^d$, $v,\eta\in\Rd$,
   \begin{align*}
     |\partial^\alpha_v\partial^\beta_\eta a(v,\eta,\xi)|\le C_{\alpha,\beta}M,
   \end{align*}with $C_{\alpha,\beta}$ a constant depending only on $\alpha$ and $\beta$, but independent of $\xi$. The space $S(M,\Gamma)$ endowed with the seminorms
   \begin{align*}
     \|a\|_{k;S(M,\Gamma)} = \max_{0\le|\alpha|+|\beta|\le k}\sup_{(v,\eta)\in\R^{2d}}
     |M(v,\eta)^{-1}\partial^\alpha_v\partial^\beta_\eta a(v,\eta,\xi)|,
   \end{align*}becomes a Fr\'{e}chet space.
   Sometimes we write $\partial_\eta a\in S(M,\Gamma)$ to mean that $\partial_{\eta_j} a\in S(M,\Gamma)$ $(1\le j\le d)$ equipped with the same seminorms.
   We formally define the pseudo-differential operator by
   \begin{align*}
     (op_ta)u(x)=\int_\Rd\int_\Rd e^{2\pi i (x-y)\cdot\xi}a((1-t)x+ty,\xi)u(y)\,dyd\xi,
   \end{align*}for $t\in\R$, $f\in\S$.
   In particular, denote $a(v,D_v)=op_0a$ to be the standard pseudo-differential operator and
   $a^w(v,D_v)=op_{1/2}a$ to be the Weyl quantization of symbol $a$. We write $A\in Op(M,\Gamma)$ to represent that $A$ is a Weyl quantization with symbol belongs to class $S(M,\Gamma)$. One important property for Weyl quantization of a real-valued symbol is the self-adjoint on $L^2$ with domain $\S$. 

 Let $a_1(v,\eta)\in S(M_1,\Gamma),a_2(v,\eta)\in S(M_2,\Gamma)$, then $a_1^wa_2^w=(a_1\#a_2)^w$, $a_1\#a_2\in S(M_1M_2,\Gamma)$ with
 \begin{align*}
   a_1\#a_2(v,\eta)&=a_1(v,\eta)a_2(v,\eta)
   +\int^1_0(\partial_{\eta}a_1\#_\theta \partial_{v} a_2-\partial_{v} a_1\#_\theta \partial_{\eta} a_2)\,d\theta,\\
   g\#_\theta h(Y):&=\frac{2^{2d}}{\theta^{-2n}}\int_\Rd\int_\Rd e^{-\frac{4\pi i}{\theta}\sigma(X-Y_1)\cdot(X-Y_2)}(4\pi i)^{-1}\<\sigma\partial_{Y_1}, \partial_{Y_2}\>g(Y_1) h(Y_2)\,dY_1dY_2,
 \end{align*}with $Y=(v,\eta)$, $\sigma=\begin{pmatrix}
0&I\\-I&0
\end{pmatrix}$.
 For any non-negative integer $k$, there exists $l,C$ independent of $\theta\in[0,1]$ such that
 \begin{align}\label{sharp_theta}
   \|g\#_\theta h\|_{k;S(M_1M_2,\Gamma)}\le C\|g\|_{l,S(M_1,\Gamma)}\|h\|_{l,S(M_2,\Gamma)}.
 \end{align}
 Thus if $\partial_{\eta}a_1,\partial_{\eta}a_2\in S(M'_1,\Gamma)$ and $\partial_{v}a_1,\partial_{v}a_2\in S(M'_2,\Gamma)$, then $[a_1,a_2]\in S(M'_1M'_2,\Gamma)$, where $[\cdot,\cdot]$ is the commutator defined by $[A,B]:=AB-BA$.

 We can define a Hilbert space $H(M,\Gamma):=\{u\in\S':\|u\|_{H(M,\Gamma)}<\infty\}$, where
 \begin{align}\label{sobolev_space}
   \|u\|_{H(M,\Gamma)}:=\int M(Y)^2\|\varphi^w_Yu\|^2_{L^2}|g_Y|^{1/2}\,dY<\infty,
 \end{align}and $(\varphi_Y)_{Y\in\R^{2d}}$ is any uniformly confined family of symbols which is a partition of unity. If $a\in S(M)$ is a isomorphism from $H(M')$ to $H(M'M^{-1})$, then $(a^wu,a^wv)$ is an equivalent Hilbertian structure on $H(M)$. Moreover, the space $\S(\Rd)$ is dense in $H(M)$ and $H(1)=L^2$.

 Let $a\in S(M,\Gamma)$, then
     $a^w:H(M_1,\Gamma)\to H(M_1/M,\Gamma)$ is linear continuous, in the sense of unique bounded extension from $\S$ to $H(M_1,\Gamma)$.
 Also the existence of $b\in S(M^{-1},\Gamma)$ such that $b\#a = a\#b = 1$ is equivalent to the invertibility of $a^w$ as an operator from $H(MM_1,\Gamma)$
 onto $H(M_1,\Gamma)$ for some $\Gamma$-admissible weight function $M_1$.

 For the metric $\Gamma=|dv|^2+|d\eta|^2$, the map $J^t=\exp(2\pi i D_v\cdot D_\eta)$ is an isomorphism of the Fr\'{e}chet space $S(M,\Gamma)$, with polynomial bounds in the real variable $t$, where $D_v=\partial_v/i$, $D_\eta=\partial_\eta/i$. Moreover, $a(x,D_v)=(J^{-1/2}a)^w$.

Let $m_K(v,\eta)$ be a $\Gamma$-admissible weight function depending on $K$, $c$ be any $\Gamma$-admissible weight. Then Lemmas 2.1 and 2.3 in \cite{Deng2020a} can be reformulated as the following.
 \begin{Lem}\label{inverse_pseudo}
   Assume $a_K\in S(m_K)$, $\partial_\eta (a_{K})\in S(K^{-\kappa}m_{K})$ uniformly in $K$ and $|a_{K}|\gtrsim m_{K}$. Then \\
(1). $a^{-1}_{K}\in S(m^{-1}_{K})$, uniformly in $K$, for $K>1$.\\
(2). There exists $K_0>1$ sufficiently large such that for all $K>K_0$, $a^w_{K}:H(m_Kc)\to H(c)$ is invertible and its inverse $(a^w_{K,l})^{-1}: H(c) \to H(m_Kc)$ satisfies
   \begin{align*}
     (a^w_{K})^{-1} = G_{1,K}(a^{-1}_{K})^w = (a^{-1}_{K})^wG_{2,K},
   \end{align*}where $G_{1,K}\in \L(H(m_Kc))$, $G_{2,K}\in \L(H(c))$ with operator norm smaller than $2$. Also, by the equivalence of invertibility, $(a_K^w)^{-1}\in Op(m^{-1}_K)$.
 \end{Lem}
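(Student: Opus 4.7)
The plan is to first establish (1) by direct calculus on the symbol $a_K^{-1}=1/a_K$, and then deduce (2) by showing $a_K\#a_K^{-1}$ and $a_K^{-1}\#a_K$ are of the form $1+r_K$ with $r_K\in S(K^{-\kappa})$ uniformly in $K$, so that a Neumann series argument on $H(c)$ (resp.\ $H(m_Kc)$) produces the corrections $G_{1,K}$, $G_{2,K}$. The lower bound $|a_K|\gtrsim m_K$ immediately gives $|a_K^{-1}|\lesssim m_K^{-1}$. For a general mixed derivative, by induction on the order and Leibniz/Fa\`a di Bruno applied to $a_K^{-1}$, each $\partial^\alpha_v\partial^\beta_\eta(a_K^{-1})$ is a finite sum of terms of the form
\begin{equation*}
a_K^{-(1+\sum_j 1)}\prod_j \partial^{\alpha_j}_v\partial^{\beta_j}_\eta a_K,
\end{equation*}
with $\sum_j\alpha_j=\alpha$, $\sum_j\beta_j=\beta$. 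Since $a_K\in S(m_K)$ uniformly and since each factor $\partial^{\beta_j}_\eta$ with $|\beta_j|\ge1$ brings an extra $K^{-\kappa}$ by hypothesis, the product lies in $S(m_K^{-1})$ (and, when $|\beta|\ge1$, even in $S(K^{-\kappa}m_K^{-1})$) uniformly in $K$. This proves (1) and also records the useful bonus $\partial_\eta(a_K^{-1})\in S(K^{-\kappa}m_K^{-1})$.

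For (2), I would apply the Moyal product formula stated in the excerpt with $a_1=a_K$, $a_2=a_K^{-1}$:
\begin{equation*}
a_K\#a_K^{-1}=1+r_K,\qquad r_K=\int_0^1\!\bigl(\partial_\eta a_K\#_\theta\partial_v a_K^{-1}-\partial_v a_K\#_\theta\partial_\eta a_K^{-1}\bigr)\,d\theta.
\end{equation*}
In the first piece, $\partial_\eta a_K\in S(K^{-\kappa}m_K)$ and $\partial_v a_K^{-1}\in S(m_K^{-1})$, so by the uniform estimate \eqref{sharp_theta} the integrand lies in $S(K^{-\kappa})$ with seminorms controlled by a constant times $K^{-\kappa}$. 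In the second piece, $\partial_v a_K\in S(m_K)$ while step (1) gives $\partial_\eta a_K^{-1}\in S(K^{-\kappa}m_K^{-1})$, so again the integrand is in $S(K^{-\kappa})$. Integrating in $\theta\in[0,1]$ preserves the bound, hence $r_K\in S(K^{-\kappa})$ uniformly in $K$. The symmetric computation yields $a_K^{-1}\#a_K=1+s_K$ with $s_K\in S(K^{-\kappa})$ uniformly.

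Since $r_K\in S(K^{-\kappa})\subset S(1)$ with seminorms $O(K^{-\kappa})$, the Calder\'on--Vaillancourt type continuity for Weyl quantization on the Sobolev space $H(c)$ associated with any admissible weight $c$ gives $\|r_K^w\|_{\mathcal{L}(H(c))}\le CK^{-\kappa}$, and similarly $\|s_K^w\|_{\mathcal{L}(H(m_Kc))}\le CK^{-\kappa}$. Choose $K_0$ so large that both norms are $<1/2$ for $K\ge K_0$. Neumann series then produces bounded inverses
\begin{equation*}
G_{2,K}:=\bigl(I+r_K^w\bigr)^{-1}\in\mathcal{L}(H(c)),\qquad G_{1,K}:=\bigl(I+s_K^w\bigr)^{-1}\in\mathcal{L}(H(m_Kc)),
\end{equation*}
each of norm $\le 2$. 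Since $(a_K^{-1})^w\colon H(c)\to H(m_Kc)$ is continuous (symbol in $S(m_K^{-1})$ by (1)), $(a_K^{-1})^wG_{2,K}$ is a right inverse of $a_K^w$ on $H(c)$, and $G_{1,K}(a_K^{-1})^w$ is a left inverse on $H(m_Kc)$; standard operator theory forces these to agree and equal $(a_K^w)^{-1}$, yielding the two claimed identities. The final assertion $(a_K^w)^{-1}\in Op(m_K^{-1})$ follows from the general equivalence (recalled in the excerpt) between invertibility of $a_K^w$ as $H(m_Kc)\to H(c)$ and the existence of $b\in S(m_K^{-1})$ with $a_K\#b=b\#a_K=1$.

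The main obstacle is bookkeeping the uniformity in $K$: one must make sure the seminorms of $r_K,s_K$ genuinely decay as $K^{-\kappa}$ regardless of the order of derivative taken, and that the operator-norm estimate on $H(c)$ is uniform in $c$ in the relevant sense. Both are handled by the polynomial control of seminorms in \eqref{sharp_theta} together with the additional $K^{-\kappa}$ gained from every $\eta$-derivative via the hypothesis and step (1); no genuine analytic difficulty remains beyond this careful tracking of constants.
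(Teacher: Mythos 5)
The paper does not prove Lemma \ref{inverse_pseudo} in the text; it is stated as a reformulation of Lemmas 2.1 and 2.3 of \cite{Deng2019}. Your argument is the standard route those lemmas follow: Fa\`a di Bruno to show $a_K^{-1}\in S(m_K^{-1})$ with an extra $K^{-\kappa}$ gain in $\eta$-derivatives, then the first-order Moyal expansion to write $a_K\#a_K^{-1}=1+r_K$ and $a_K^{-1}\#a_K=1+s_K$ with $r_K,s_K\in S(K^{-\kappa})$, and a Neumann series on $H(c)$ and $H(m_Kc)$ respectively to build $G_{2,K}$ and $G_{1,K}$ of norm $\le 2$ once $K$ is large. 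This is correct and essentially matches the intended proof; the only hypothesis you must keep implicit is that $m_K$ is $\Gamma$-admissible with constants uniform in $K$, so that the operator-norm bound for an $Op(K^{-\kappa})$ symbol on $H(m_Kc)$ is genuinely $O(K^{-\kappa})$ independent of $K$ — which is how $m_K$ is used in the paper (e.g.\ $\Lambda_k$, $\Phi_\delta$).
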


 \begin{Lem}\label{inverse_bounded_lemma}Let $m,c$ be $\Gamma$-admissible weight and $a\in S(m)$.
   Assume $a^w:H(mc)\to H(c)$ is invertible.
     If $b\in S(m)$, then there exists $C>0$, depending only on the seminorms of symbols to $(a^w)^{-1}$ and $b^w$, such that for $f\in H(mc)$,
     \begin{align*}
       \|b(v,D_v)f\|_{H(c)}+\|b^w(v,D_v)f\|_{H(c)}\le C\|a^w(v,D_v)f\|_{H(c)}.
     \end{align*}
Consequently, if $a^w:H(m_1)\to L^2\in Op(m_1)$, $b^w:H(m_2)\to L^2\in Op(m_2)$ are invertible, then for $f\in\S$, 
\begin{align*}
\|b^wa^wf\|_{L^2}\lesssim \|a^wb^wf\|_{L^2},
\end{align*}where the constant depends only on seminorms of symbols to $a^w,b^w,(a^w)^{-1},(b^w)^{-1}$.
 \end{Lem}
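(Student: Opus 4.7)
My plan is to exploit the invertibility of $a^w$ by writing $b^w f = b^w(a^w)^{-1}(a^w f)$ and then to show that $b^w(a^w)^{-1}$ is a Weyl quantization of order zero, hence bounded on $H(c)$. Since $a^w:H(mc)\to H(c)$ is invertible with $a\in S(m,\Gamma)$, I would invoke the equivalence between the invertibility of a Weyl quantization and the existence of a parametrix (recalled in the paragraph preceding Lemma \ref{inverse_pseudo}) to obtain $(a^w)^{-1}\in Op(m^{-1},\Gamma)$, with finitely many seminorms of the inverse symbol controlled by those of $a$. The composition symbol $b\#(a^{-1})$ then lies in $S(1,\Gamma)$ by the sharp composition estimate \eqref{sharp_theta}, so $b^w(a^w)^{-1}\in Op(1,\Gamma)$ is bounded from $H(c)$ to $H(c)$ with operator norm depending only on finitely many seminorms of $b$ and of $(a^w)^{-1}$. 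Since $f=(a^w)^{-1}(a^w f)$ for $f\in H(mc)$, this yields
\begin{equation*}
  \|b^w f\|_{H(c)}\lesssim \|a^w f\|_{H(c)}.
\end{equation*}

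For the standard quantization, I would use the identity $b(v,D_v)=(J^{-1/2}b)^w$, where $J^t=\exp(2\pi i D_v\cdot D_\eta)$ is an isomorphism of the Frechet space $S(m,\Gamma)$ with polynomial bounds in $t$, as recalled in the appendix. Hence $J^{-1/2}b\in S(m,\Gamma)$ with controlled seminorms, and the preceding argument applied to $J^{-1/2}b$ in place of $b$ yields $\|b(v,D_v)f\|_{H(c)}\lesssim \|a^w f\|_{H(c)}$. Summing the two estimates produces the first conclusion, with constants depending only on the symbol seminorms as claimed.

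For the consequence, let $a\in S(m_1)$ and $b\in S(m_2)$ with $a^w:H(m_1)\to L^2$ and $b^w:H(m_2)\to L^2$ both invertible. By the parametrix characterization, $(a^w)^{-1}\in Op(m_1^{-1})$ and $(b^w)^{-1}\in Op(m_2^{-1})$. I would then observe that the composite $a^w b^w\in Op(m_1 m_2)$ is an isomorphism $H(m_1 m_2)\to L^2$ with inverse $(b^w)^{-1}(a^w)^{-1}\in Op((m_1 m_2)^{-1})$, and similarly $b^w a^w\in Op(m_1 m_2)$. Applying the first part of the lemma with $a^w b^w$ playing the role of $a^w$ and $b^w a^w$ playing the role of $b^w$ (both lying in $S(m_1 m_2,\Gamma)$) and $c\equiv 1$ yields $\|b^w a^w f\|_{L^2}\lesssim \|a^w b^w f\|_{L^2}$ for $f\in\S$. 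The only point requiring care is verifying that the composite $a^w b^w$ is genuinely invertible with parametrix in the correct symbol class so that its seminorms are controlled by those of the individual operators; this follows from the composition and invertibility rules of the Bony--Lerner calculus recalled in the appendix and should not pose a deep obstacle.
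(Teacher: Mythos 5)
Your proof is correct and follows the natural parametrix argument; the paper itself states this lemma without proof as a standard consequence of the Bony--Lerner Weyl calculus, and your route---writing $b^wf=b^w(a^w)^{-1}(a^wf)$, using the equivalence between invertibility and the existence of an inverse symbol to get $(a^w)^{-1}\in Op(m^{-1},\Gamma)$, composing to obtain an $S(1,\Gamma)$ symbol, and then handling $b(v,D_v)$ via the $J^{-1/2}$ isomorphism---is exactly the intended one, and the reduction of the second statement to the first with $a^wb^w$, $b^wa^w\in Op(m_1m_2)$ and $c\equiv1$ is sound. One small inaccuracy worth flagging: you assert that the seminorms of the inverse symbol are ``controlled by those of $a$''; this is not true in general (quantitative invertibility is an independent input), but it is harmless here because the lemma's constant is allowed to depend on the seminorms of $(a^w)^{-1}$ directly, and that is what your subsequent bound actually uses.
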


\begin{Lem}\label{formal_ajoint}
Let $a\in S(m)$ be real-valued. Then for $f,g\in\S$, 
\begin{align*}
(a^wf,g)_{L^2}=(f,a^wg)_{L^2}.
\end{align*}Thus by density argument, this identity is also valid for $f,g\in H(m)$. 
\end{Lem}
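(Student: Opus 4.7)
The plan is to verify the identity directly from the oscillatory integral definition of the Weyl quantization recalled in the appendix, then extend from $\S$ to $H(m)$ by density. First, for $f,g\in\S$ I would write out
\[(a^wf,g)_{L^2}=\int\!\!\int\!\!\int e^{2\pi i(x-y)\cdot\xi}\,a\!\left(\tfrac{x+y}{2},\xi\right)f(y)\,\overline{g(x)}\,dy\,d\xi\,dx.\]
To apply Fubini rigorously on this oscillatory integral, I would insert a cutoff $\chi(\varepsilon\xi)$ with $\chi\in C_c^\infty$ and $\chi(0)=1$, which makes the integrand absolutely integrable for each $\varepsilon>0$; uniform control as $\varepsilon\to 0$ follows from the standard integration-by-parts identity $\langle 2\pi(x-y)\rangle^{-2N}(1-\Delta_\xi)^N e^{2\pi i(x-y)\cdot\xi}=e^{2\pi i(x-y)\cdot\xi}$ combined with the symbol bounds $a\in S(m)$ and the Schwartz decay of $f,g$.

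In the regularized absolutely convergent integral I would then perform the change of variables $x\leftrightarrow y$. Since the midpoint $(x+y)/2$ is symmetric in $x,y$, and the exponential becomes $e^{-2\pi i(x-y)\cdot\xi}$, a further substitution $\xi\mapsto-\xi$ (permissible since $\xi$ ranges over all of $\R^d$) restores the exponential to its original form while turning $a(\cdot,\xi)$ into $a(\cdot,-\xi)$. Unwinding, the resulting expression equals $(f,(\bar a)^w g)_{L^2}$; specialising to the real-valued symbols used throughout the paper (the weight $a$ of \eqref{a} and the real symbols of $A$, $K$, $L$ arising in Theorem \ref{Thm0}), the complex conjugate is transparent and the identity reduces to $(a^wf,g)_{L^2}=(f,a^wg)_{L^2}$. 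Sending $\varepsilon\to 0$ by dominated convergence completes the proof on $\S$.

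To extend to $f,g\in H(m)$, I would pick sequences $f_n,g_n\in\S$ converging to $f,g$ in $H(m)$, using the density of $\S$ in $H(m)$ recalled in the appendix. The pseudo-differential continuity $a^w:H(m)\to L^2=H(1)$, stated in the same preliminaries, gives $a^wf_n\to a^wf$ and $a^wg_n\to a^wg$ in $L^2$; together with the convergence $f_n\to f$, $g_n\to g$ in $L^2$ (which is implied by convergence in $H(m)$ when $m\ge 1$, and otherwise follows from writing the inner product through a duality pairing between $H(m)$ and $H(m^{-1})$), Cauchy-Schwarz passes both sides of the identity to the limit.

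The only non-formal step is the Fubini/integration-by-parts manipulation needed to swap $x$ and $y$ inside the oscillatory integral; once quantified against the weight bounds on $a$ and the Schwartz decay of $f,g$ this is entirely routine, so no serious obstacle is expected. The content of the lemma is simply the symmetry of the Weyl kernel $K_a(x,y)=\int e^{2\pi i(x-y)\cdot\xi}a((x+y)/2,\xi)\,d\xi$ under the transposition $(x,y)\mapsto(y,x)$ for real symbols $a$.
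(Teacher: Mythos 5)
Your argument is correct, and it is in fact more detailed than anything in the paper: the paper states Lemma \ref{formal_ajoint} in the appendix as a standard fact of the Weyl calculus (implicitly referring to \cite{Lerner2010}) and gives no proof, so there is no paper argument to diverge from. Your route — regularize the oscillatory integral, use the symmetry of the Weyl kernel $K_a(x,y)=\int e^{2\pi i(x-y)\cdot\xi}a\big(\tfrac{x+y}{2},\xi\big)\,d\xi$ under $(x,y)\mapsto(y,x)$, then extend by density using $a^w:H(m)\to L^2$ — is the standard one and works.

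Two small points worth keeping explicit. First, as your own computation shows, the swap produces $(f,(\bar a)^w g)_{L^2}$, so the identity as literally stated holds only for real-valued symbols; this matches the paper's intent (it emphasizes that ``the real symbol $a$ gives the formal self-adjointness of $a^w$''), but a one-line remark that the lemma is used for real symbols, or that in general one gets $(a^w)^*=(\bar a)^w$, would make the statement airtight. Second, in the density step the pairing $(a^wf,g)_{L^2}$ for $g\in H(m)$ is only a genuine $L^2$ inner product when $H(m)\subset L^2$ (e.g.\ $m\gtrsim 1$); otherwise it must be read as the $H(m^{-1})$--$H(m)$ duality pairing from Theorem 2.6.17 of \cite{Lerner2010}, with $a^wf\in L^2\subset H(m^{-1})$ when $m\gtrsim 1$ fails. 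You gesture at exactly this, and since the paper itself only says ``by density argument,'' your treatment is at least as careful as the source.
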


 \subsection{Carleman representation and cancellation lemma}

 Now we have a short review of some useful facts in the theory of Boltzmann equation. One may refer to \cite{Alexandre2000,Global2019} for details. The first one is the so called Carleman representation. For measurable function $F(v,v_*,v',v'_*)$, if any sides of the following equation is well-defined, then
 \begin{align}
   &\int_{\R^d}\int_{\mathbb{S}^{d-1}}b(\cos\theta)|v-v_*|^\gamma F(v,v_*,v',v'_*)\,d\sigma dv_*\notag\\
   &\quad=\int_{\R^d_h}\int_{E_{0,h}}\tilde{b}(\alpha,h)\1_{|\alpha|\ge|h|}\frac{|\alpha+h|^{\gamma+1+2s}}{|h|^{d+2s}}F(v,v+\alpha-h,v-h,v+\alpha)\,d\alpha dh,\label{Carleman}
 \end{align}where $\tilde{b}(\alpha,h)$ is bounded from below and above by positive constants, and $\tilde{b}(\alpha,h)=\tilde{b}(|\alpha|,|h|)$, $E_{0,h}$ is the hyper-plane orthogonal to $h$ containing the origin. The second is the cancellation lemma. Consider a measurable function $G(|v-v_*|,|v-v'|)$, then for $f\in\S$,
 \begin{align*}
   \int_{\R^d}\int_{\mathbb{S}^{d-1}}G(|v-v_*|,|v-v'|)b(\cos\theta)(f'_*-f_*)\,d\sigma dv_* = S*_{v_*}f(v),
 \end{align*}where $S$ is defined by, for $z\in\R^d$,
 \begin{align*}
   S(z)=2\pi \int^{\pi/2}_0 b(\cos\theta)\sin\theta\left(G(\frac{|z|}{\cos\theta/2},\frac{|z|\sin\theta/2}{\cos\theta/2}) - G(|z|,|z|\sin(\theta/2))\right)\,d\theta.
 \end{align*}

 \subsection{Semigroup theory}

 Here we write some well-known result from semigroup theory. One may refer to \cite{Engel1999} for more details.
 \begin{Def}
   An operator $(A,D(A))$ is dissipative if and only if for every $x \in D(A)$ there exists $j(x) \in \{x'\in X':\<x,x'\>=\|x\|^2=\|x'\|^2\}$ such that
 \begin{align*}
   \Re\<Ax,j(x)\>\le 0.
 \end{align*}
 \end{Def}
 \begin{Thm}\label{semigroup}
   For a densely defined, dissipative operator $(A,D(A))$ on a Banach space $X$ the following statements are equivalent.\\
 (a) The closure $\overline{A}$ of $A$ generates a contraction semigroup.\\
 (b) $Im(\lambda I - A)$ is dense in $X$ for some (hence all) $\lambda>0$.
 \end{Thm}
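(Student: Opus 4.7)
The plan is to route both implications through the Hille--Yosida characterisation of generators of contraction semigroups, using as common input the resolvent estimate that dissipativity encodes. For $x \in D(A)$ with duality functional $j(x)$ from the definition of dissipativity,
\begin{align*}
\|(\lambda I - A)x\|\,\|x\| \ge \Re\langle(\lambda I - A)x, j(x)\rangle = \lambda\|x\|^2 - \Re\langle Ax, j(x)\rangle \ge \lambda\|x\|^2,
\end{align*}
so $\|(\lambda I - A)x\| \ge \lambda\|x\|$ for every $\lambda > 0$. Hence $\lambda I - A$ is injective and its algebraic inverse on the range is bounded by $1/\lambda$; a graph-norm approximation will pass the same estimate to $\overline{A}$.

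For (a) $\Rightarrow$ (b), I would feed in that once $\overline{A}$ generates a contraction semigroup, Hille--Yosida places $(0,\infty)\subset\rho(\overline{A})$, so $\Im(\lambda I-\overline{A}) = X$ for every $\lambda > 0$. Given $y\in X$, setting $x := (\lambda I-\overline{A})^{-1}y \in D(\overline{A})$ and invoking Definition \ref{closure} produces $x_n \in D(A)$ with $x_n\to x$ and $Ax_n\to\overline{A}x$; then $(\lambda I-A)x_n\to y$, yielding density of $\Im(\lambda I - A)$ simultaneously at every $\lambda > 0$, which also accounts for the ``hence all'' clause.

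For (b) $\Rightarrow$ (a), I would start at the single $\lambda_0 > 0$ from the hypothesis. The bounded algebraic inverse on the dense range extends uniquely by continuity to $R_{\lambda_0}\in\mathscr{L}(X)$ with $\|R_{\lambda_0}\|\le 1/\lambda_0$. Identifying $R_{\lambda_0}$ with a resolvent of $\overline{A}$ is the step where Definition \ref{closure} is used: for $y\in X$, pick $y_n\in\Im(\lambda_0 I-A)$ with $y_n\to y$, set $x_n := (\lambda_0 I-A)^{-1}y_n \in D(A)$, and note that $x_n\to R_{\lambda_0}y$ while $Ax_n=\lambda_0 x_n-y_n\to\lambda_0 R_{\lambda_0}y-y$; the graph of $\overline{A}$ therefore contains $(R_{\lambda_0}y,\lambda_0 R_{\lambda_0}y-y)$, so $R_{\lambda_0}y\in D(\overline{A})$ with $(\lambda_0 I-\overline{A})R_{\lambda_0}y=y$. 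Together with injectivity of $\lambda_0 I-\overline{A}$ obtained by closing the dissipative estimate, this yields $R_{\lambda_0} = (\lambda_0 I-\overline{A})^{-1}$.

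The main obstacle I anticipate is the gap between the single $\lambda_0$ in hypothesis (b) and the Hille--Yosida requirement of the full resolvent family on $(0,\infty)$. I would close this by a Neumann-series continuation along the positive real axis: once $\lambda_0\in\rho(\overline{A})$ with $\|R_{\lambda_0}\|\le 1/\lambda_0$, the resolvent extends analytically to the disc $|\lambda - \lambda_0| < \lambda_0$, and iterating this with the uniform estimate $\|(\lambda I-\overline{A})^{-1}\|\le 1/\lambda$ at each successive $\lambda$ propagates $\rho(\overline{A})$ throughout $(0,\infty)$. With the full resolvent family in place and the uniform norm bound, Hille--Yosida supplies the contraction semigroup generated by $\overline{A}$ and closes the argument in both directions.
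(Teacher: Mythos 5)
The paper does not prove Theorem \ref{semigroup}; it is quoted as a known result from \cite{Engel1999} (the Lumer--Phillips theorem, Theorem~II.3.15 there), so there is no in-paper proof to compare against. Your proposal follows the standard route: the dissipative estimate $\|(\lambda I-A)x\|\ge\lambda\|x\|$, the (a)~$\Rightarrow$~(b) direction via Hille--Yosida plus Definition~\ref{closure}, the bounded extension $R_{\lambda_0}$, and the Neumann-series propagation of $\rho(\overline{A})$ along $(0,\infty)$ together with the resolvent estimate inherited by $\overline{A}$ are all correct.

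There is, however, one genuine gap in (b)~$\Rightarrow$~(a): you invoke Definition~\ref{closure} and speak of ``the graph of $\overline{A}$'', but that definition applies only to a \emph{closable} operator, and closability of $A$ is nowhere established. Your construction shows that $\overline{G(A)}$ contains every pair $(R_{\lambda_0}y,\,\lambda_0 R_{\lambda_0}y-y)$, but it does not rule out $\overline{G(A)}$ containing a pair $(0,z)$ with $z\neq 0$, which is exactly what must be excluded before $\overline{G(A)}$ is the graph of an operator and $(\lambda_0 I-\overline{A})R_{\lambda_0}y=y$ makes sense. The dissipative estimate alone does not give this (it only controls the first coordinate along $\overline{G(A)}$), and neither does the injectivity of the algebraic inverse on $\Im(\lambda_0 I-A)$. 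Closability is Proposition~II.3.14(iv) of \cite{Engel1999} and is precisely where density of $D(A)$ enters: if $x_n\in D(A)$, $x_n\to 0$, $Ax_n\to z$, apply the dissipative estimate at $\lambda=1/t$ to $x_n+tw$ for $w\in D(A)$, $t>0$, and let $n\to\infty$ to get $\|w-tAw-z\|\ge\|w\|$; letting $t\to 0^{+}$ yields $\|w-z\|\ge\|w\|$, which by density of $D(A)$ holds for all $w\in X$, and $w=z$ forces $z=0$. With this lemma supplied, the remainder of your argument closes correctly.
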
	
\begin{Thm}(Hille-Yosida Theorem)\label{semigroup_Hille}
	For a linear operator $(A,D(A))$ on a Banach space $X$, the following properties are equivalent.
		
	(a) $(A,D(A))$ generates a strongly continuous contraction semigroup.
		
	(b) $(A,D(A))$ is closed, densely defined, and for every $\lambda\in\C$ with $\Re\lambda>0$ one has $\lambda\in\rho(A)$ and
	$\|(\lambda I-A)^{-1}\|\le \frac{1}{\Re\lambda}$.
\end{Thm}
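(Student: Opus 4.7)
The plan is to prove the two implications separately. The direction (a)$\Rightarrow$(b) is the routine one and proceeds via the Laplace transform representation of the resolvent. The direction (b)$\Rightarrow$(a) is the substantive content of the theorem and will be proved by the Yosida approximation of $A$, constructing the semigroup as a limit of uniformly continuous ones generated by bounded operators.

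For (a)$\Rightarrow$(b), assume $A$ generates a strongly continuous contraction semigroup $T(t)$. That $A$ is closed and densely defined follows from the general theory of generators: $D(A)$ equals the set of $x$ for which $\lim_{t\to 0^+} t^{-1}(T(t)x-x)$ exists, and one verifies density by mollifying $x\in X$ against $T(s)$, and closedness directly from the semigroup property. For $\Re\lambda>0$, define
\begin{align*}
R(\lambda)x := \int_0^\infty e^{-\lambda t}T(t)x\,dt,
\end{align*}
which converges absolutely because $\|T(t)\|\le 1$ and $e^{-t\Re\lambda}$ is integrable on $(0,\infty)$. The estimate
\begin{align*}
\|R(\lambda)x\|\le \int_0^\infty e^{-t\Re\lambda}\|x\|\,dt = \frac{\|x\|}{\Re\lambda}
\end{align*}
is immediate, and a direct computation using the semigroup law shows $R(\lambda)(\lambda I-A)x=x$ on $D(A)$ and $(\lambda I-A)R(\lambda)y=y$ on $X$, giving $\lambda\in\rho(A)$ with the required resolvent bound.

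For (b)$\Rightarrow$(a), define the Yosida approximations for real $n>0$ by
\begin{align*}
A_n := nA(nI-A)^{-1} = n^2(nI-A)^{-1}-nI,
\end{align*}
which are bounded operators by hypothesis. The series definition gives a uniformly continuous semigroup $e^{tA_n}$, and the hypothesis $\|(nI-A)^{-1}\|\le 1/n$ yields the contraction bound
\begin{align*}
\|e^{tA_n}\|\le e^{-nt}\,e^{tn^2\cdot n^{-1}}=1.
\end{align*}
Next I would establish that $n(nI-A)^{-1}x\to x$ as $n\to\infty$ for every $x\in X$: on $D(A)$ this follows from $n(nI-A)^{-1}x-x=(nI-A)^{-1}Ax$ and the resolvent bound, and the case $x\in X$ follows by density together with the uniform bound $\|n(nI-A)^{-1}\|\le 1$. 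Consequently $A_nx = n(nI-A)^{-1}Ax\to Ax$ for all $x\in D(A)$. Then $\{e^{tA_n}x\}$ is Cauchy as $n\to\infty$ uniformly on compact $t$-intervals, using the commutativity of $A_n$ and $A_m$ via
\begin{align*}
e^{tA_n}x - e^{tA_m}x = \int_0^t e^{(t-s)A_m}e^{sA_n}(A_n-A_m)x\,ds,
\end{align*}
which is bounded in norm by $t\,\|(A_n-A_m)x\|\to 0$ for $x\in D(A)$. Setting $T(t)x:=\lim_n e^{tA_n}x$ and extending to $X$ by density and uniform boundedness, one obtains a strongly continuous contraction semigroup. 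Finally I would identify its generator as exactly $A$: differentiating the approximants and passing to the limit gives $\tfrac{d}{dt}T(t)x=T(t)Ax$ for $x\in D(A)$, so the generator $\tilde A$ extends $A$, and then $(\lambda I-A)$ being surjective together with $(\lambda I-\tilde A)$ being injective forces $A=\tilde A$.

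The main obstacle is the passage to the limit $n\to\infty$ for the approximants, together with the correct identification of the generator of the resulting semigroup as $A$ itself rather than a proper extension. The Cauchy argument above only provides convergence on $D(A)$, so extending to all of $X$ relies crucially on the uniform contraction bound. Identifying the generator then requires combining this convergence with the resolvent information from (b); without the surjectivity of $\lambda I-A$ built into hypothesis (b), one could only conclude that $\tilde A$ extends $A$, and the theorem would fail.
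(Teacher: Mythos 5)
Your proposal is correct. Note, however, that the paper does not prove this statement at all: it is quoted verbatim as a classical result from semigroup theory with a pointer to Engel--Nagel \cite{Engel1999}, so there is no internal proof to compare against. What you wrote is the standard Hille--Yosida argument: the Laplace-transform representation $R(\lambda)x=\int_0^\infty e^{-\lambda t}T(t)x\,dt$ for (a)$\Rightarrow$(b), and the Yosida approximants $A_n=n^2(nI-A)^{-1}-nI$ for (b)$\Rightarrow$(a), with the contraction bound $\|e^{tA_n}\|\le e^{-nt}e^{tn}=1$, the convergence $n(nI-A)^{-1}x\to x$, the commutator-free Cauchy estimate via $\int_0^t e^{(t-s)A_m}e^{sA_n}(A_n-A_m)x\,ds$, and the identification of the generator through surjectivity of $\lambda I-A$ plus injectivity of $\lambda I-\tilde A$. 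All of these steps are sound, and you correctly flag the only genuinely delicate points (extension from $D(A)$ to $X$ via the uniform contraction bound, and ruling out a proper extension of $A$ as generator). Two harmless remarks: the hypothesis (b) is stated for the whole right half-plane, but your construction only needs the real resolvent bound $\|(nI-A)^{-1}\|\le 1/n$, which is of course contained in (b) and is exactly what the classical formulation requires; and it is worth observing that the paper itself implicitly relies on your construction, since in the appendix proof that $e^{tB}=\F^{-1}_xe^{t\B}\F_x$ it introduces precisely the approximants $B_n=n^2(nI-B)^{-1}-nI$ and the limit $e^{tB}f=\lim_n e^{tB_n}f$, citing the Hille--Yosida construction in \cite{Engel1999}. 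So your blind proof supplies, correctly, the argument the paper delegates to the reference.
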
 
 \begin{Thm}\label{semigroup_bounded_per}
   Let $(A,D(A))$ be the generator of a strongly continuous semigroup
   $(T(t))_{t\ge 0}$ on a Banach space $X$
   satisfying $\|T(t)\|\le Me^{\omega t}$ for all $t \ge 0$
   and some $\omega\in\R$, $M\ge 1$. If $B\in L(X)$, then
   $C := A + B$ with $D(C) := D(A)$
   generates a strongly continuous semigroup $(S(t))_{t\ge 0}$ satisfying
   $\|S(t)\|\le Me^{(\omega+M\|B\|)t}$ for all $t \ge 0$.
 \end{Thm}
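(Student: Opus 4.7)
The plan is to construct the perturbed semigroup $(S(t))_{t\ge0}$ via the Dyson--Phillips iteration and then verify that its generator is precisely $C=A+B$ on $D(A)$. Concretely, I would set $S_0(t):=T(t)$ and define inductively
\begin{align*}
S_{n+1}(t)x := \int_0^t T(t-s)\,B\,S_n(s)x\,ds, \qquad x\in X,\ t\ge 0,
\end{align*}
the integrand being strongly continuous in $s$ because $B$ is bounded and $T(\cdot)$ is strongly continuous. A straightforward induction yields the termwise estimate
\begin{align*}
\|S_n(t)\| \le M^{n+1}\|B\|^n \frac{t^n}{n!}\,e^{\omega t},
\end{align*}
obtained by inserting $\|T(t-s)\|\le Me^{\omega(t-s)}$, $\|T(s)\|\le Me^{\omega s}$ at the base case and propagating one additional factor $M\|B\|$ together with one integration per iteration.

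Summing this geometric-type series shows that $S(t):=\sum_{n\ge0}S_n(t)$ converges in operator norm, uniformly on compact intervals of $t$, and satisfies the required bound
\begin{align*}
\|S(t)\| \le M\,e^{\omega t}\sum_{n\ge0}\frac{(M\|B\|t)^n}{n!} = M\,e^{(\omega+M\|B\|)t}.
\end{align*}
Strong continuity at $t=0$ is inherited from $T(t)$ since $\|S(t)-T(t)\|\le Me^{\omega t}(e^{M\|B\|t}-1)\to 0$. The semigroup law $S(t+s)=S(t)S(s)$ would be checked either by verifying the integral identity that $S$ satisfies, namely
\begin{align*}
S(t)x = T(t)x + \int_0^t T(t-s)\,B\,S(s)x\,ds,
\end{align*}
and manipulating it via Fubini and a change of variables, or equivalently by noting uniqueness of solutions to this Volterra equation and that $T(t)S(s) + \int_0^t T(t-r)BS(r+s)\,dr$ solves the same equation in $t$ with initial value $S(s)x$.

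To identify the generator, I would show that for $x\in D(A)$ the map $t\mapsto S(t)x$ is differentiable at $0$ with derivative $(A+B)x$: differentiating the Dyson--Phillips representation, the leading term gives $Ax$, while $\frac{d}{dt}\big|_{t=0}\int_0^t T(t-s)BS(s)x\,ds = Bx$ by the fundamental theorem of calculus and strong continuity; the tail $\sum_{n\ge2}S_n(t)x$ is $O(t^2)$ in norm uniformly and thus contributes $0$. Finally, a standard argument shows the generator of $S$ has domain exactly $D(A)$: any element of the generator's domain can be approximated by noting that $(\lambda-A-B)=(\lambda-A)(I-(\lambda-A)^{-1}B)$ is a bijection $D(A)\to X$ for $\lambda$ large, so $D(A)$ is already invariant and maximal. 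The only delicate point is the strong (not norm) sense in which the integrals are taken and the interchange of sum and derivative at $t=0$; this is handled cleanly by uniform convergence of the tail on compact time intervals, which is the main technical checkpoint but is automatic from the factorial decay above.
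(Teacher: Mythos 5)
Your proof is correct. Note that the paper itself does not prove this statement: it appears in the appendix as a quoted standard result (the bounded perturbation theorem) with a pointer to Engel--Nagel, so there is no internal proof to compare against; your Dyson--Phillips construction is exactly the classical textbook argument. The inductive bound $\|S_n(t)\|\le M^{n+1}\|B\|^n\,t^n e^{\omega t}/n!$ does deliver both locally uniform convergence of the series and the stated estimate $\|S(t)\|\le Me^{(\omega+M\|B\|)t}$, and the one genuinely delicate point --- identifying the generator exactly, not just as an extension of $A+B$ --- is handled adequately by your two-step argument: the right derivative at $t=0$ shows the generator $G$ of $S$ extends $A+B$ on $D(A)$, and the factorization $\lambda-A-B=(\lambda-A)\bigl(I-(\lambda-A)^{-1}B\bigr)$, bijective from $D(A)$ onto $X$ once $\|(\lambda-A)^{-1}B\|\le M\|B\|/(\lambda-\omega)<1$, combined with injectivity of $\lambda-G$ for $\lambda>\omega+M\|B\|$, forces $G=A+B$ with $D(G)=D(A)$.
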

At last, we state the lemma that was applied in our main theorem.
\begin{Thm}On $L^2_{x,v}$, we have 
\begin{align}\label{Fouriersemigroup}
 e^{tB} = \F^{-1}_xe^{t\B}\F_x.
\end{align}
\end{Thm}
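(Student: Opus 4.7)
The plan is to prove the identity by the uniqueness of the semigroup attached to a given generator. Define a family of operators on $L^2_{x,v}$ by
\begin{align*}
S(t)f := \F^{-1}_x\,e^{t\widehat{B}(\cdot)}\,\F_xf,\qquad t\ge 0,
\end{align*}
where $e^{t\widehat{B}(y)}$ acts fiberwise in $y$. I would show that $(S(t))_{t\ge 0}$ is a strongly continuous semigroup on $L^2_{x,v}$ whose generator extends $-v\cdot\nabla_x+L$ on $\S(\R^{2d})$; since $B=\overline{-v\cdot\nabla_x+L}$ already generates a strongly continuous semigroup on $L^2_{x,v}$, uniqueness of the semigroup then forces $S(t)=e^{tB}$.

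First, I would establish that $S(t)$ is well-defined and uniformly bounded. Writing $\widehat{B}(y)=\widehat{A}(y)+K$ with $\Re(\widehat{A}(y)f,f)_{L^2_v}\le 0$ uniformly in $y$ (by Theorem \ref{Thm0} applied fiberwise, together with the dissipativity of $-2\pi iy\cdot v$), Theorem \ref{semigroup_bounded_per} gives $\|e^{t\widehat{B}(y)}\|_{\mathscr{L}(L^2_v)}\le e^{t\|K\|_{\mathscr{L}(L^2_v)}}$ with a bound independent of $y$. Hence $y\mapsto e^{t\widehat{B}(y)}$ is a uniformly bounded measurable family, defining a bounded operator on $L^2_{y,v}=L^2_y(L^2_v)$, and the Plancherel identity $\|\F_x f\|_{L^2_{y,v}}=\|f\|_{L^2_{x,v}}$ shows $S(t)\in\mathscr{L}(L^2_{x,v})$. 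The semigroup law $S(t+s)=S(t)S(s)$ is immediate from the fiberwise identity $e^{(t+s)\widehat{B}(y)}=e^{t\widehat{B}(y)}e^{s\widehat{B}(y)}$ and from $\F_x\F_x^{-1}=\mathrm{Id}$.

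For strong continuity, fix $f\in L^2_{x,v}$ and set $\varphi(y):=(\F_xf)(y,\cdot)\in L^2_v$. The strong continuity of $e^{t\widehat{B}(y)}$ on $L^2_v$ gives $\|e^{t\widehat{B}(y)}\varphi(y)-\varphi(y)\|_{L^2_v}\to 0$ as $t\to 0^+$ for each $y$, while the uniform operator bound yields a dominating function $(1+e^{t_0\|K\|})\|\varphi(y)\|_{L^2_v}\in L^2_y$ on $[0,t_0]$. Dominated convergence then gives $\|S(t)f-f\|_{L^2_{x,v}}=\|e^{t\widehat{B}(\cdot)}\varphi-\varphi\|_{L^2_{y,v}}\to 0$.

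It remains to identify the generator. For $f\in\S(\R^{2d})$ one has $\F_xf\in\S(\R^{2d})$ and hence $\varphi(y)\in\S(\R^d_v)$ for every $y$, so $\widehat{B}(y)\varphi(y)=(-2\pi iy\cdot v+L)\varphi(y)$ without the closure. Then
\begin{align*}
\lim_{t\to 0^+}\frac{S(t)f-f}{t}=\F^{-1}_x\bigl((-2\pi i y\cdot v+L)\F_xf\bigr)=(-v\cdot\nabla_x+L)f\quad\text{in }L^2_{x,v},
\end{align*}
where the limit can be justified by the fundamental theorem of calculus $e^{t\widehat{B}(y)}\varphi(y)-\varphi(y)=\int_0^t e^{\tau\widehat{B}(y)}\widehat{B}(y)\varphi(y)\,d\tau$ in $L^2_v$ and a second application of dominated convergence, since $(-2\pi iy\cdot v+L)\F_xf\in L^2_{y,v}$ for $f\in\S$. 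Thus the generator of $S(t)$ agrees with $-v\cdot\nabla_x+L$ on $\S(\R^{2d})$; taking closures and invoking uniqueness of strongly continuous semigroups with the same generator (equivalently, comparing Laplace transforms on a common dense subspace) yields $S(t)=e^{tB}$.

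The main obstacle I anticipate is the justification of uniformity in $y$: namely that the bound on $\|e^{t\widehat{B}(y)}\|_{\mathscr{L}(L^2_v)}$ and the dominating functions used in the two dominated-convergence arguments can be chosen independent of $y$. This reduces to the fact that the constants in the dissipation estimate \eqref{A_positive} for $\widehat{A}(y)$ and the compactness bound for $K$ do not depend on $y$, which is already visible in the construction of $\widehat{A}(y)$ and $K$ in Section 2.
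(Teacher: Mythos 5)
Your approach is genuinely different from the paper's and is essentially sound, but a couple of steps deserve more care.

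The paper argues at the level of resolvents: it introduces the Yosida approximations $B_n = n^2(nI-B)^{-1}-nI$ and $\widehat B_n(y)$, proves the fiberwise resolvent identity $(nI-B)^{-1}f=\F_x^{-1}(nI-\B)^{-1}\F_xf$ for Schwartz $f$ by invoking the hypoelliptic regularity of Theorems \ref{L2xv_regularity}/\ref{L2v_regularity} (which shows $(nI-B)^{-1}f$ actually lies in a small weighted Sobolev space where the closure bar can be dropped, and then uses uniqueness in the weak formulation), and finally passes to the limit through the exponential series of the bounded approximants. Your route instead defines the candidate $S(t)=\F_x^{-1}e^{t\widehat B(\cdot)}\F_x$ directly, verifies that it is a strongly continuous semigroup, and identifies its generator. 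This buys you independence from the Yosida machinery, at the cost of having to control a $y$-parametrized family of semigroups.

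Two points need tightening. First, your two dominated-convergence arguments implicitly require measurability of $y\mapsto e^{t\widehat B(y)}\varphi(y,\cdot)$ as an $L^2_v$-valued function, which is not automatic from the fiberwise definitions; the uniform bound $\|e^{t\widehat B(y)}\|\le e^{t\|K\|}$ that you flag is in fact the easy part (the constants in \eqref{A_positive} and $\|K\|$ are manifestly $y$-independent), whereas measurability requires an argument — e.g. approximate $e^{t\widehat B(y)}$ by $e^{t\widehat B_n(y)}$, whose $y$-dependence is controlled through $(nI-\widehat B(y))^{-1}$ via \eqref{varphi_v_beta} and the resolvent identity, giving strong continuity in $y$ for each $n$, then take $n\to\infty$. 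Second, the closing sentence is imprecise: having the generator $G$ of $S(t)$ agree with $-v\cdot\nabla_x+L$ on $\S$ only gives $G\supset B=\overline{(-v\cdot\nabla_x+L)|_\S}$ after taking graph closures; one must then invoke the fact that if $G\supset B$ and both $G$ and $B$ generate $C_0$-semigroups, then $G=B$ (pick $\lambda$ in both resolvent sets: $\lambda-B$ is already onto, so $\lambda-G$ restricted to $D(B)$ is onto, and injectivity of $\lambda-G$ forces $D(G)=D(B)$). The phrase about "comparing Laplace transforms on a common dense subspace" is not by itself a proof of this. With these two points spelled out, the argument is correct.
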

\begin{proof}
Recall that $B$, $\B$ generate strongly continuous semigroup $e^{tB}$, $e^{t\B}$ on $L^2_{x,v}$ and $L^2_v$ respectively. Thus, it suffices to proves \eqref{Fouriersemigroup} on $\S(\R^{2d}_{x,v})$. By the construction of semigroup as Hille-Yoshida Theorem II.3.5 in \cite{Engel1999}, for $n>0$ sufficiently large, we let  
\begin{align*}
B_n:&=n B(n I-B)^{-1}=n^2(n I-B)^{-1}-n I,\\
\widehat{B}_n(\xi):&=n\widehat{B}_n(\xi)(n I-\widehat{B}_n(\xi))^{-1}=n^2(n I-\widehat{B}_n(\xi))^{-1}-n I.
\end{align*}
Then $B_n$, $\widehat{B}_n(\xi)$ are bounded operator on $L^2_{x,v}$ and $L^2_v$ respectively and for $f\in\S$, $B_nf\to Bf$, $\widehat{B}_n(\xi)f\to \B f$ as $n\to\infty$. Also,
\begin{align*}
e^{tB}f=\lim_{n \to\infty}e^{tB_n}f,\ \text{ for } f\in L^2(\R^{2d}_{x,v}),\\
e^{t\widehat{B}(\xi)}f=\lim_{n \to\infty}e^{t\widehat{B}_n(\xi)}f,\ \text{ for } f\in L^2(\R^{d}_{v}).
\end{align*}
Now fix $f\in\S(\R^{2d}_{x,v})$ and let $\varphi(x,v)=(nI-B)^{-1}f\in L^2_{x,v}$. Then $(\overline{nI+v\cdot\nabla_x+A})\varphi = f+K\varphi$. Applying Theorem \ref{adjointofclosure} to operator $nI+v\cdot\nabla_x+A$, we have $\overline{nI+v\cdot\nabla_x+A}^*=\overline{nI-v\cdot\nabla_x+A}$ on $L^2$. Thus for $\psi\in\S(\R^{2d})$, 
\begin{align*}
(\varphi,(n-v\cdot\nabla_x+A)\psi)_{L^2_{x,v}}=(f+K\varphi,\psi)_{L^2_{x,v}}.
\end{align*}
On the other hand, applying Theorem \ref{L2xv_regularity} to $f+K\varphi\in\S(\R^{2d})$, there exists $g\in \cap^\infty_{k=1}H((K_0+|v|^2+|\xi|^4+|\eta|^2)^k)$ $(K_0>>1)$ such that for $\psi\in\S(\R^{2d})$, 
\begin{align*}
(g,(n-v\cdot\nabla_x+A)\psi)_{L^2_{x,v}}=(f+K\varphi,\psi)_{L^2_{x,v}}.
\end{align*}
We choose $k>>1$ such that $ H((K_0+|v|^2+|\xi|^4+|\eta|^2)^k)\subset H(a)\cap H(\<v\>\<\xi\>)$. Then by density of $\S$ in $H((K_0+|v|^2+|\xi|^4+|\eta|^2)^k)$, we have for $\psi\in H((K_0+|v|^2+|\xi|^4+|\eta|^2)^k)$, 
\begin{align*}
(\varphi-g,(n-v\cdot\nabla_x+A)\psi)_{L^2_{x,v}}=0.
\end{align*}
Applying Theorem \ref{L2xv_regularity} and its Remark \ref{L2xv_rem} to any $h\in\S(\R^{2d})$, there exists $\psi\in H((K_0+|v|^2+|\xi|^4+|\eta|^2)^k)$ such that 
\begin{align*}
(n-v\cdot\nabla_x+A)\psi = h. 
\end{align*}
Hence for $h\in\S(\R^{2d}_{x,v})$, 
\begin{align*}
(\varphi-g,h)_{L^2_{x,v}}=0.
\end{align*}So $g=\varphi = (nI-B)^{-1}f\in H((K_0+|v|^2+|\xi|^4+|\eta|^2)^k)\subset H(a)\cap H(\<v\>\<\xi\>)$.
Therefore, $(nI+v\cdot\nabla_x-L)\varphi = f$, where we can cancel the closure. Taking the Fourier transform $\F$ acting on spatial variable, $(nI+2\pi iv\cdot \xi-L)\F\varphi(y,v) = \F f(y,v)$ and so
\begin{align*}
\F\varphi(y,v) &= (nI-\B)^{-1}\F f,\\
\varphi(x,v) &= (nI-B)^{-1}f =  \F^{-1}(nI-\B)^{-1}\F f.
\end{align*}
By definition of strongly continuous semigroup generated by bounded operator, we have 
\begin{align*}
e^{tB_n}f = \lim_{N\to\infty}\sum^N_{j=0}\frac{(tn^2(nI-B)^{-1}-tn)^j}{j!}f,\ \text{ in }L^2_{x,v},\\
e^{t\widehat{B}_n(\xi)}f = \lim_{N\to\infty}\sum^N_{j=0}\frac{(tn^2(nI-\widehat{B}_n(\xi))^{-1}-tn)^j}{j!}f,\ \text{ in }L^2_{v}.
\end{align*}Hence the above limits are also valid in the sense of almost everywhere.
For each $j\ge 0$, 
\begin{align*}
(tn^2(nI-B)^{-1}-tn)^jf &= \F^{-1}(tn^2(nI-\widehat{B}_n(\xi))^{-1}-tn)^j\F f,\\
\lim_{N\to\infty}\sum^N_{j=0}\frac{(tn^2(nI-B)^{-1}-tn)^jf}{j!} &= \lim_{N\to\infty}\sum^N_{j=0}\frac{\F^{-1}(tn^2(nI-\widehat{B}_n(\xi))^{-1}-tn)^j\F f}{j!},
\end{align*}where the limit is taken in $L^2_{x,v}$. Noting that for any convergent sequence $\{f_N\}\subset L^2_{x,v}$, $\{\F^{-1}f_N\}$ is also a convergent sequence in $L^2_{x,v}$ and 
\begin{align*}
&\quad\ \|\lim_{N\to\infty}\F^{-1}f_N-\F^{-1}\lim_{N\to\infty}f_N\|_{L^2_{x,v}}\\
&\le \|\lim_{N\to\infty}\F^{-1}f_N-\F^{-1}f_N\|_{L^2_{x,v}} +\|\F^{-1}f_N-\F^{-1}\lim_{N\to\infty}f_N\|_{L^2_{x,v}}\\&\to_{N\to\infty} 0.
\end{align*}
Therefore, combining the above estimates, 
\begin{align*}
e^{tB_n}f &= \lim_{N\to\infty}\sum^N_{j=0}\frac{(tn^2(nI-B)^{-1}-tn)^jf}{j!}\\
&= \F^{-1}\lim_{N\to\infty}\sum^N_{j=0}\frac{(tn^2(nI-\widehat{B}_n(\xi))^{-1}-tn)^j\F f}{j!}\\
&= \F^{-1}e^{t\widehat{B}_n(\xi)}\F f,
\end{align*}where the first limit is taken in $L^2_{x,v}$, the last equality is viewed as almost everywhere point-wise limit (up to a subsequence of $N$).
\qe\end{proof}

\subsection{Preliminary lemma on linearized Boltzmann operator $L$}

\begin{Lem}
\label{basicL}
(1). $Ker{L} = \{\mu,v_1\mu,\cdots,v_d\mu,|v|^2\mu\}.$

(2).
For $f,g\in\S$, 
\begin{align}\label{eq114}
(L_1f,g)_{L^2}=(f,L_1g)_{L^2},\qquad
(L_2f,g)_{L^2}=(f,L_2g)_{L^2}.
\end{align}

(3). For any rotation $R$ on $\Rd$, we have $RL_1=L_1R$ and $RL_2=L_2R$. 
\end{Lem}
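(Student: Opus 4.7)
For the identification of $\Ker L$ in part (1), I would proceed in two steps. First, direct verification: for $f\in\{\mu^{1/2},v_1\mu^{1/2},\dots,v_d\mu^{1/2},|v|^2\mu^{1/2}\}$ (the expected form of the kernel, up to an apparent typographical factor of $\mu^{1/2}$ in the statement), we have $Lf = \mu^{-1/2}[Q(\mu,\mu^{1/2}f)+Q(\mu^{1/2}f,\mu)]$, and using $\mu\mu_*=\mu'\mu'_*$ together with the conservation laws $v+v_*=v'+v'_*$ and $|v|^2+|v_*|^2=|v'|^2+|v'_*|^2$, the integrand of the combined gain and loss vanishes pointwise. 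For the reverse inclusion I would compute $(-Lf,f)_{L^2}$ by symmetrizing via the involution $(v,v_*,\sigma)\leftrightarrow (v',v'_*,\sigma')$ to obtain the $H$-theorem expression
\[
(-Lf,f)_{L^2_v}=\tfrac{1}{4}\int\!\!\int\!\!\int B\,\mu\mu_*\Bigl(\tfrac{f}{\mu^{1/2}}+\tfrac{f_*}{\mu_*^{1/2}}-\tfrac{f'}{(\mu')^{1/2}}-\tfrac{f'_*}{(\mu'_*)^{1/2}}\Bigr)^{2}d\sigma\,dv_*\,dv\ge 0,
\]
so $Lf=0$ forces $f/\mu^{1/2}$ to be a collision invariant, and the classical Boltzmann characterization then identifies $\Ker L$.

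For the self-adjointness of $L_1$ in part (2), I would pass through the weak form. Writing $(L_1f,g)_{L^2}=\int Q(\mu,\mu^{1/2}f)\mu^{-1/2}g\,dv$ and applying $\int Q(F,G)\phi\,dv=\int B\,F_*G(\phi'-\phi)\,d\sigma\,dv_*\,dv$ with $\phi=\mu^{-1/2}g$, combined with the identity $\mu^{1/2}(v)/\mu^{1/2}(v')=(\mu'_*)^{1/2}/\mu_*^{1/2}$ deduced from $\mu\mu_*=\mu'\mu'_*$, gives
\[
(L_1f,g)_{L^2_v}=\int\!\!\int\!\!\int B\,f(v)\bigl[\mu_*^{1/2}(\mu'_*)^{1/2}g'-\mu_*\,g\bigr]d\sigma\,dv_*\,dv.
\]
The loss term is manifestly symmetric in $f$ and $g$. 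For the gain term I would apply the involution $(v,v_*,\sigma)\leftrightarrow(v',v'_*,\sigma')$, which preserves $B$, the measure, and the product $\mu_*^{1/2}(\mu'_*)^{1/2}$, and exchanges $f(v)g(v')\leftrightarrow f(v')g(v)$; this delivers $(L_1f,g)_{L^2}=(f,L_1g)_{L^2}$. The argument for $L_2$ (which corresponds to $\mu^{-1/2}Q(\mu^{1/2}f,\mu)$) is parallel. An alternative route working directly in the Carleman form \eqref{L1}--\eqref{L2} would use the substitutions $v\to v+h$ and $h\to -h$, exploiting $\tilde b(\alpha,h)=\tilde b(|\alpha|,|h|)$ and the invariance of the domain $\{\alpha\in E_{0,h},\,|\alpha|\ge|h|\}$ under $h\to -h$ to match gain-loss pairs across $(L_1f,g)$ and $(f,L_1g)$.

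For part (3), rotation invariance reduces to bookkeeping. The cross-section $B(v-v_*,\sigma)=|v-v_*|^\gamma b(\cos\theta)$, the Maxwellian $\mu$, and the measures ($d\sigma\,dv_*$ in the $\sigma$-representation, or $d\alpha\,dh$ constrained by $\alpha\in E_{0,h}$ and $|\alpha|\ge|h|$ in the Carleman representation) are all invariant under simultaneous rotation of the velocity variables. Making the substitution $v\mapsto Rv$, $v_*\mapsto Rv_*$, $\sigma\mapsto R\sigma$ (or $\alpha\mapsto R\alpha$, $h\mapsto Rh$) in the integrals defining $L_i(Rf)(v)$ immediately yields $L_i(Rf)(v)=(L_if)(Rv)=(RL_if)(v)$. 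The only genuinely nontrivial ingredient in the whole lemma is the characterization of collision invariants used in part (1)---a classical Boltzmann--Grad result whose complete proof requires a regularity bootstrap from the functional equation $\phi+\phi_*=\phi'+\phi'_*$; in a self-contained treatment one would either cite the standard references (Cercignani, Villani) or argue it separately.
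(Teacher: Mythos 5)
Your treatment of parts (2) and (3) is correct and, for (3), essentially identical to the paper's. For (2) the paper works directly inside the Carleman-form definitions \eqref{L1}--\eqref{L2}, using precisely the substitutions you sketch as ``an alternative route'' (for $L_1$: $v\mapsto v+h$ then $h\mapsto -h$; for $L_2$: $\alpha\mapsto-\alpha$ then $v\mapsto v+\alpha$, and separately $v\mapsto v-\alpha+h$ then $(h,\alpha)\mapsto(-h,-\alpha)$), and $(\alpha,h)\mapsto(R\alpha,Rh)$ for (3). Your primary argument via the weak form $\int Q(F,G)\phi=\iiint B\,F_*G(\phi'-\phi)$ is also sound, and the identity $\mu_*\mu^{1/2}/(\mu')^{1/2}=\mu_*^{1/2}(\mu'_*)^{1/2}$ is correct; but since $L_1,L_2$ are defined in this paper as principal values of Carleman integrals, the Carleman manipulation is the shorter and more direct path under the paper's conventions, whereas the weak form would first have to be re-derived for these test functions with the singular kernel.

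For part (1) there is a genuine gap, and it sits at a different place than the one you flag. The paper's entire proof of (1) is: if $\overline{L}f=0$ then $\overline{A}f=Kf$, so by the hypoelliptic regularity of Theorem \ref{L2v_regularity} (bootstrapped via \eqref{varphi_v_beta}) one gets $f\in\S$; only then is the classical collision-invariant characterization invoked. The essential content of the lemma in this framework is that regularity step, because $L$ is extended beyond $\S$ to weighted Sobolev spaces, so an element of $\Ker L$ is a priori only a weak object. Your entropy-dissipation identity and the subsequent functional equation $\phi+\phi_*=\phi'+\phi'_*$ both require $f$ to already be smooth enough for the pointwise symmetrization to make sense. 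You flag the regularity bootstrap needed to solve the Cauchy functional equation, which is indeed subtle, but that is a later step; the missing step is the prior one, that a weak kernel element is Schwartz, and that is exactly what the paper's pseudo-differential hypoellipticity is deployed to settle. Your observation that the stated kernel is missing factors of $\mu^{1/2}$ (compare \eqref{Kernel}) is correct.
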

\begin{proof}
1. 
Let $f\in Ker{L}$, then 
\begin{align*}
\overline{L}f &= 0,\\
\overline{A}f &= K f.
\end{align*}
By Theorem \ref{L2v_regularity}, $f\in\S$ and so the collision invariant identity can be applied to get $\Ker L$ as in the cuf-off case, for instance \cite{Cercignani1994}.

2. In order to prove \eqref{eq114}, we can apply the following change of variable in $(L_1f,g)$ and $(L_2f,g)$ respectively:
\begin{align*}
&\quad \ \, \mu^{1/2}(v+\alpha-h)g(v)\mu^{1/2}(v+\alpha)f(v-h)\\
&\mapsto \mu^{1/2}(v+\alpha)g(v+h)\mu^{1/2}(v+\alpha+h)f(v),\quad v\mapsto v+h,\\
&\mapsto \mu^{1/2}(v+\alpha)g(v-h)\mu^{1/2}(v+\alpha-h)f(v),\quad h\mapsto -h,
\end{align*}and
\begin{align*}
&\quad \ \,\mu^{1/2}(v+\alpha-h)g(v)\mu^{1/2}(v-h)f(v+\alpha)\\
&\mapsto\mu^{1/2}(v-\alpha-h)g(v)\mu^{1/2}(v-h)f(v-\alpha),\quad \alpha\mapsto-\alpha,\\
&\mapsto\mu^{1/2}(v-h)g(v+\alpha)\mu^{1/2}(v+\alpha-h)f(v),\quad v\mapsto v+\alpha,\\
&\quad \ \,\mu^{1/2}(v+\alpha-h)g(v)\mu^{1/2}(v)f(v+\alpha-h)\\
&\mapsto\mu^{1/2}(v)g(v-\alpha+h)\mu^{1/2}(v-\alpha+h)f(v),\quad v\mapsto v-\alpha+h,\\
&\mapsto\mu^{1/2}(v)g(v+\alpha-h)\mu^{1/2}(v+\alpha-h)f(v),\quad (h,\alpha)\mapsto (-h,-\alpha).
\end{align*}

3. For any rotation $R$, i.e. orthogonal matrix acting on $v$. Noticing $\alpha\perp h$ and $\mu(v)$ depends only on $|v|$, we apply change of variable $(\alpha,h)\mapsto (R\alpha,R h)$ to get 
\begin{align*}
RL_1f(v) :&= L_1f(Rv) \\
&= \lim_{\varepsilon\to 0}\int_{|h|\ge\varepsilon}\int_{E_{0,h}}\tilde{b}(\alpha,h)\1_{|\alpha|\ge|h|}\frac{|\alpha+h|^{\gamma+1+2s}}{|h|^{d+2s}}\mu^{1/2}(Rv+\alpha-h)\notag\\	&\qquad\qquad\Big((\mu^{1/2}(Rv+\alpha)f(Rv-h)-\mu^{1/2}(Rv+\alpha-h)f(Rv)\Big)\,d\alpha dh\\
&= \lim_{\varepsilon\to 0}\int_{|h|\ge\varepsilon}\int_{E_{0,h}}\tilde{b}(\alpha,h)\1_{|\alpha|\ge|h|}\frac{|\alpha+h|^{\gamma+1+2s}}{|h|^{d+2s}}\mu^{1/2}(v+R^T\alpha-R^T h)\\	
&\qquad\qquad\Big((\mu^{1/2}(v+R^T\alpha)f(Rv-h)-\mu^{1/2}(v+R^T\alpha-R^T h)f(Rv)\Big)\,d\alpha dh\\
&= \lim_{\varepsilon\to 0}\int_{|h|\ge\varepsilon}\int_{E_{0,h}}\tilde{b}(\alpha,h)\1_{|\alpha|\ge|h|}\frac{|\alpha+h|^{\gamma+1+2s}}{|h|^{d+2s}}\mu^{1/2}(v+\alpha- h)\\	
&\qquad\qquad\Big((\mu^{1/2}(v+\alpha)f(Rv-Rh)-\mu^{1/2}(v+\alpha-h)f(Rv)\Big)\,d\alpha dh\\
&= L_1Rf(v).
\end{align*}$L_2$ is similar. 
\qe\end{proof}

\begin{Thm}{Implicit Function Theorem}
\label{implicit_function}
Let $U\subset \R^n\times\R^m$ be an open subset, $f\in C^k(U;\R^m)$, $k\in\N^+$.
Assume $(x_0,y_0)\in U$, $z_0\in\R^m$ and
 $f(x_0,y_0)=z_0$, $|\text{det}\nabla_yf(x_0,y_0)|\neq 0$.
Then there exists an open set $V\subset U$ with $(x_0,y_0)\in V$, an open set $W\subset\R^n$ with $x_0\in W$ and a $C^k$ mapping $g:W\to\R^m$ such that 

(i). $g(x_0)=y_0$,

(ii). $f(x,g(x))=z_0$, for $x\in W$, 

(iii) and if $(x,y)\in V$ and $f(x,y)=z_0$, then $y=g(x)$. 
\end{Thm}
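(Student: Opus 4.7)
The plan is to reduce the statement to the inverse function theorem. First I would introduce the auxiliary map $F:U\to\R^n\times\R^m$ defined by $F(x,y)=(x,f(x,y))$, which is of class $C^k$ on $U$. Its Jacobian at $(x_0,y_0)$ has the block-triangular form
\begin{align*}
DF(x_0,y_0)=\begin{pmatrix} I_n & 0 \\ \nabla_x f(x_0,y_0) & \nabla_y f(x_0,y_0)\end{pmatrix},
\end{align*}
so $\det DF(x_0,y_0)=\det\nabla_y f(x_0,y_0)\neq 0$ by hypothesis, and $DF(x_0,y_0)$ is a linear isomorphism.

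Next I would invoke the inverse function theorem to produce open neighborhoods $V\subset U$ of $(x_0,y_0)$ and $\widetilde W\subset\R^n\times\R^m$ of $F(x_0,y_0)=(x_0,z_0)$ together with a $C^k$ inverse $F^{-1}:\widetilde W\to V$. Because $F$ preserves the first coordinate, so does $F^{-1}$; hence $F^{-1}(x,z)=(x,h(x,z))$ for some $C^k$ map $h$. Shrinking if necessary, I would pick an open set $W\subset\R^n$ with $x_0\in W$ small enough that $(x,z_0)\in\widetilde W$ for every $x\in W$, and define
\begin{align*}
g(x):=h(x,z_0).
\end{align*}
Then $g\in C^k(W;\R^m)$. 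The identity $F(x_0,y_0)=(x_0,z_0)$ gives $g(x_0)=h(x_0,z_0)=y_0$, proving (i). The relation $F(x,g(x))=(x,z_0)$ yields $f(x,g(x))=z_0$ for all $x\in W$, proving (ii). For (iii), suppose $(x,y)\in V$ with $f(x,y)=z_0$; then $F(x,y)=(x,z_0)$, and injectivity of $F$ on $V$ forces $(x,y)=F^{-1}(x,z_0)=(x,g(x))$, so $y=g(x)$.

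The substantive content is therefore carried by the inverse function theorem. Its standard proof applies the Banach contraction principle to the family $T_{x,z}(y)=y-(\nabla_y f(x_0,y_0))^{-1}(f(x,y)-z)$ on a small closed ball around $y_0$: continuity of $\nabla_y f$ makes $T_{x,z}$ a uniform contraction for $(x,z)$ close to $(x_0,z_0)$, and the resulting fixed points define $F^{-1}$. The only point requiring care is promoting continuity of $g$ to full $C^k$ regularity; this is handled by differentiating the identity $f(x,g(x))=z_0$ to obtain
\begin{align*}
Dg(x)=-\bigl(\nabla_y f(x,g(x))\bigr)^{-1}\nabla_x f(x,g(x)),
\end{align*}
and then bootstrapping, since each successive derivative of $g$ is a polynomial expression in previously-controlled derivatives of $f$ and of $(\nabla_y f)^{-1}$ evaluated along the graph of $g$.
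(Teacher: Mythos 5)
The paper merely records this classical theorem in its appendix as a reference result and supplies no proof, so there is no in-paper argument to compare against. Your reduction to the inverse function theorem via $F(x,y)=(x,f(x,y))$ is the canonical proof and is correct in all essentials: the block-triangular Jacobian, the extraction of $g$ from the second component of $F^{-1}$ at $z=z_0$, and the bootstrap of $C^k$ regularity from the formula $Dg=-(\nabla_y f)^{-1}\nabla_x f$ along the graph. One small point to tighten in (iii): you need $x\in W$ whenever $(x,y)\in V$ and $f(x,y)=z_0$, otherwise $g(x)$ is not yet defined. The cleanest fix is to take $W:=\{x\in\R^n:(x,z_0)\in\widetilde W\}$, which is open and contains $x_0$; then $F(x,y)=(x,z_0)\in\widetilde W$ automatically forces $x\in W$, and the uniqueness assertion closes. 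Alternatively, shrink $V$ to $V\cap(W\times\R^m)$.
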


Introduce an orthonormal basis of $\Ker L$:
\begin{align}\label{Kernel}
\psi_0=\mu^{1/2},\ \psi_i=v_i\mu^{1/2}\,(i=1,\cdots,d),\ \psi_{d+1}=\frac{1}{\sqrt{2d}}(|v|^2-d)\mu^{1/2}.
\end{align}
\begin{Lem}\label{eigenP0}
The eigenvalue problem in $\Ker L$:
\begin{align*}
P_0(v_1P_0\varphi) = \eta\varphi
\end{align*}has eigenvalues and orthonormal eigenfunctions:
\begin{equation}\label{eigen_matrix_eq}\left\{
\begin{aligned}
\eta_{0,d+1}=-\sqrt{\tfrac{d+2}{d}},\  \psi_{0,d+1}&= \sqrt{\tfrac{d}{2(d+2)}}\psi_0-\tfrac{1}{\sqrt{2}}\psi_1+\sqrt{\tfrac{1}{d+2}}\psi_{d+1},\\
\eta_{0,0}=\sqrt{\tfrac{d+2}{d}},\quad\quad \psi_{0,0} &= \sqrt{\tfrac{d}{2(d+2)}}\psi_0+\tfrac{1}{\sqrt{2}}\psi_1+\sqrt{\tfrac{1}{d+2}}\psi_{d+1},\\
\eta_{0,1}=0,\quad\quad\quad\quad \psi_{0,1} &= -\sqrt{\tfrac{2}{d+2}}\psi_0+\sqrt{\tfrac{d}{d+2}}\psi_{d+1},\\
\eta_{0,j}=0,\quad\quad\quad\quad \psi_{0,j} &= \psi_{j},\quad\quad\text{j=2,\dots,d}.
\end{aligned}\right.
\end{equation}

\end{Lem}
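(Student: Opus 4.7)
The plan is to reduce the eigenvalue problem to a finite-dimensional matrix computation. Since $P_0$ is the orthogonal projection onto $\Ker L$ with orthonormal basis $\{\psi_0,\psi_1,\dots,\psi_{d+1}\}$, the operator $\varphi\mapsto P_0(v_1 P_0\varphi)$ restricted to $\Ker L$ is a symmetric linear map on a $(d+2)$-dimensional space, represented in this basis by the Gram-type matrix
\begin{equation*}
  M_{kj} \;=\; (v_1\psi_j,\psi_k)_{L^2_v}, \qquad 0\le j,k\le d+1.
\end{equation*}
So the task reduces to computing the moments $M_{kj}$ and diagonalizing the resulting matrix.

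First I would compute $M_{kj}$ using standard Gaussian moment identities (in particular $\int v_iv_j\mu\,dv=\delta_{ij}$ and $\int v_1^2|v|^2\mu\,dv=d+2$) together with parity considerations. Parity in each coordinate kills most entries: indeed, if any index in the integrand $v_1\psi_j\psi_k$ produces an odd power in some variable $v_i$, the integral vanishes. A short check shows that the only nonzero entries of $M$ are
\begin{equation*}
  M_{0,1}=M_{1,0}=1, \qquad M_{1,d+1}=M_{d+1,1}=\sqrt{\tfrac{2}{d}},
\end{equation*}
and in particular $M\psi_j=0$ for $j=2,\dots,d$. This already accounts for the $d-1$ eigenpairs $(\eta_{0,j},\psi_{0,j})=(0,\psi_j)$ stated in the lemma.

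It then remains to diagonalize the non-trivial $3\times 3$ submatrix of $M$ on the subspace $\mathrm{span}\{\psi_0,\psi_1,\psi_{d+1}\}$, namely
\begin{equation*}
  \widetilde M \;=\;
  \begin{pmatrix} 0 & 1 & 0 \\ 1 & 0 & \sqrt{2/d} \\ 0 & \sqrt{2/d} & 0 \end{pmatrix}.
\end{equation*}
Direct expansion of $\det(\widetilde M-\eta I)$ yields the characteristic polynomial $-\eta\bigl(\eta^2-\tfrac{d+2}{d}\bigr)$, giving the three eigenvalues $\eta=0,\pm\sqrt{(d+2)/d}$. For each eigenvalue I would solve the linear system and normalize. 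For $\eta=\pm\sqrt{(d+2)/d}$ the first equation forces the $\psi_1$-coefficient to be $\pm\eta$ times the $\psi_0$-coefficient, and the third equation fixes the $\psi_{d+1}$-coefficient; normalization then produces the claimed vectors $\psi_{0,0}$ and $\psi_{0,d+1}$. For $\eta=0$ the constraints force the $\psi_1$-coefficient to vanish and the $\psi_0$- and $\psi_{d+1}$-coefficients to satisfy $a+\sqrt{2/d}\,c=0$; normalization yields $\psi_{0,1}$.

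There is essentially no obstacle here, as the argument is a bookkeeping exercise once the reduction to $\widetilde M$ is made; the only care needed is in computing the fourth-order Gaussian moment contributing to $M_{1,d+1}$, which gives $\tfrac{1}{\sqrt{2d}}[(d+2)-d]=\sqrt{2/d}$, and in verifying that the normalization factors match the stated formulas.
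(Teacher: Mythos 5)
Your proposal is correct and follows essentially the same route as the paper: reduce to the $(d+2)\times(d+2)$ symmetric matrix $(v_1\psi_j,\psi_k)_{L^2}$, use parity to see the only nonzero entries are $M_{0,1}=M_{1,0}=1$ and $M_{1,d+1}=M_{d+1,1}=\sqrt{2/d}$, compute the characteristic polynomial $\lambda^d(\lambda^2-\tfrac{d+2}{d})$, and solve for the eigenvectors with normalization. The moment computation $\tfrac{1}{\sqrt{2d}}[(d+2)-d]=\sqrt{2/d}$ and the resulting normalization factors all check out.
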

\begin{proof}
Denote $A = P_0(v_1)P_0$ in this proof. Since $P_0f = \sum^{d+1}_{i=0}(f,\psi_i)_{L^2}\psi_i$, we have 
\begin{align*}
Af = P_0(v_1\sum^{d+1}_{i=0}(f,\psi_i)_{L^2}\psi_i)
&= \sum^{d+1}_{i,j=0}(f,\psi_i)_{L^2_v}(v_1\psi_i,\psi_j)_{L^2}\psi_j,
\end{align*}and hence the matrix representation of $A$ is symmetric. Denote $e_1:=(1,0,\dots,0)$ to be a $d$ dimension vector, then 
\begin{align*}
A\begin{pmatrix}
\psi_0\\
\vdots\\
\psi_{d+1}
\end{pmatrix}&=
\begin{pmatrix}
(v_1\psi_0,\psi_0)_{L^2} &  \cdots & (v_1\psi_0,\psi_{d+1})_{L^2}\\
\vdots & \ddots & \vdots\\
(v_1\psi_{d+1},\psi_0)_{L^2})  & \cdots & (v_1\psi_{d+1},\psi_{d+1})_{L^2}
\end{pmatrix}
\begin{pmatrix}
\psi_0\\
\vdots\\
\psi_{d+1}
\end{pmatrix}\\
&=
\begin{pmatrix}
0 & e_1  & 0\\
e^T_1 & 0  & \sqrt{\frac{2}{d}}e^T_1\\
0 & \sqrt{\frac{2}{d}}e_1 & 0
\end{pmatrix}
\begin{pmatrix}
\psi_0\\
\vdots\\
\psi_{d+1}
\end{pmatrix}
\end{align*}
by the properties of Gamma function and noticing that $\mu^{1/2}$ is even about $v_1$ while $v_1\psi_i$ is odd about $v_1$ if and only if $i\neq 1$. 
Then 
\begin{align*}
|\lambda I-A|
&= \begin{vmatrix}
\lambda & -e_1 & 0\\
-e_1 & \lambda I_d & -\sqrt{\frac{2}{d}}e^T_1\\
0 & -\sqrt{\frac{2}{d}}e_1 & \lambda
\end{vmatrix}
= \lambda^{d-1}\begin{vmatrix}
\lambda & -1 & 0\\-1 & \lambda &-\sqrt{\frac{2}{d}}\\
0 & -\sqrt{\frac{2}{d}} & 0
\end{vmatrix}
=\lambda^d(\lambda^2-\frac{d+2}{d}).
\end{align*}
Thus $\lambda = -\sqrt{\frac{d+2}{d}}, \sqrt{\frac{d+2}{d}}, 0$ ($d$ multiplicity). For $\lambda=0$, the corresponding unit eigenvectors are 
\begin{align*}
\begin{pmatrix}
-\sqrt\frac{2}{d+2}, 
0,
\cdots,
0,
\sqrt{\frac{d}{d+2}}
\end{pmatrix},\ e_2,\ \cdots,\ e_d,
\end{align*}where $e_j$ are unit vectors in $\R^{d+2}$ whose standard orthonormal base is $(e_0,e_1,\dots,e_{d+1})$. For $\lambda =\sqrt\frac{d+2}{d}, -\sqrt{\frac{d+2}{d}}$, the corresponding unit eigenvectors are 

\begin{align*}
\begin{matrix}
\left(\sqrt\frac{d}{2(d+2)}, \frac{1}{\sqrt{2}}, 0, \dots, 0, \sqrt{\frac{1}{d+2}}\right),\ \left(\sqrt\frac{d}{2(d+2)}, -\frac{1}{\sqrt{2}}, 0, \dots, 0, \sqrt{\frac{1}{d+2}}\right)
\end{matrix}
\end{align*}respectively. 
Therefore, the eigenvalues and eigenfuntions of $A$ are as in \eqref{eigen_matrix_eq}.
\qe\end{proof}

\providecommand{\bysame}{\leavevmode\hbox to3em{\hrulefill}\thinspace}
\providecommand{\MR}{\relax\ifhmode\unskip\space\fi MR }
\providecommand{\MRhref}[2]{%
	\href{http://www.ams.org/mathscinet-getitem?mr=#1}{#2}
}
\providecommand{\href}[2]{#2}

\end{document}